\newtheorem{theorem}{Theorem}
\newtheorem{definition}{Definition}
\newtheorem{proposition}{Proposition}
\newtheorem{remark}{Remark}
\newtheorem{corollary}{Corollary}
\newtheorem{lemma}{Lemma}
\newtheorem{acknowledgements}{Acknowledgements}
\newcommand{\R}{\mathbb{R}}
\newcommand{\Z}{\mathbb{Z}}
\newcommand{\rhogamma}{{\rho}_{\Gamma}}
\newcommand{\rgamma}{{r}_{\Gamma}}
\newcommand{\dd}{\,d}
\newcommand{\Lip}{\operatorname{Lip}}
\newcommand{\Id}{\operatorname{Id}}
\newcommand{\id}{\operatorname{Id}}
\newcommand{\spec}{\operatorname{Spec}}
\newcommand{\krn}{\operatorname{Ker}}
\newcommand{\wt}[1]{\widetilde{#1}}
\begin{document}

\title{Normal forms and Sternberg conjugation theorems for infinite dimensional coupled map lattices}

\title{Sternberg theorems for coupled map lattices}

\author[Ruben~Berenguel and Ernest~Fontich]{}
\subjclass{37L60 \and 37C15 \and 37G05}
\keywords{Dynamical systems on lattices \and Linearisation \and Normal form}
\email{ruben@mostlymaths.net}
\email{fontich@ub.edu}

\maketitle

\centerline{\scshape Ruben Berenguel}
\medskip
{\footnotesize
 \centerline{Independent Researcher,}
}
\medskip

\centerline{\scshape Ernest Fontich}
{\footnotesize
 \centerline{Departament de Matem\`{a}tiques i Inform\`atica,}
 \centerline{Institut de Matem\`atiques de la Universitat de Barcelona (IMUB),}
 \centerline{Barcelona Graduate School of Mathematics (BGSMath),}
 \centerline{Universitat de Barcelona (UB),}
 \centerline{Gran Via 585,
 08007 Barcelona, Spain} }

\bigskip

\begin{abstract}
  In this paper we present local Sternberg conjugation theorems near attracting
  fixed points for lattice systems. The interactions are spatially decaying and
  are not restricted to finite distance. The conjugations obtained 
  retain the same spatial decay. In the presence of resonances the conjugations
  are to a polynomial normal form that also has decaying properties.
\end{abstract}

%%%%%%%%%%%%%%%%%%%%%%%%%%%%%%%%%%%%%%%%%%%%%%%%%%%%%%%%%%%%%%%%%%%%%%%%%%%%%%%%%%%%%%%%%% 

\section{Introduction}

Coupled map lattices are used to model many systems in physics, chemistry and
biology. They are formed by sequences of nodes, each one having its own internal
dynamics and being influenced by the dynamics of other nodes through some
interactions.

Its origin can be found in the first models for the dynamics of chains
of particles under the action of a potential, with a nearest neighbours
interaction, models which were first considered by Prandtl \cite{Prandtl1928}
and Dehlinger \cite{Dehlinger1929}. Later, these models were also considered by
Frenkel and Kontorova for specific cases in dislocation models of solids in
\cite{Kontorova1938_1} and~\cite{Frenkel1939}.

Several problems can be studied under the Frenkel-Kontorova model (or some
generalization), ranging from chains of coupled pendula, dislocation dynamics
and surface physics to DNA and neural dynamics. See \cite{FrenkelKontorova} for
a modern description and many applications of this model.

In statistical mechanics, coupled oscillations were used to study numerically
the equirepartion of energy, starting with \cite{FermiPU}. See also
\cite{GallavottiFPU} for a modern treatment of the problem. Mathematically, they
appear as models of discretised partial differential equations. Several objects
and notions are studied in this setting, such as travelling waves, wave fronts,
invariant measures and spatio-temporal chaos, see \cite{kaneko1993theory},
\cite{B-S88}, \cite {Afra05}, \cite{FP99}, \cite{dauxois2002dynamics},
\cite{floria2005frenkel}, \cite{jiang-pesin}, \cite{Pesin-Yurchenko}. For
applications to neuroscience and biology see \cite{Ermentrout2010},
\cite{Izhikevich07}, \cite{peyrard2004}, \cite{peyrard2004breathers}.

One can consider higher dimensional lattices with interactions among all
particles. In this case, we have to require some decay in the strength of the
interaction, because as it is physically natural, the larger the separation
between particles is, the smaller the force of interaction should be.

Assuming each node is represented by $\R^n$, in this paper we consider
$m$-dimensional lattices modelled as
$$
\ell^\infty = \ell^\infty(\R^n) = \{x:\Z^m\to \R^n \mid \sup_{k\in \Z^m} |x(k)| < \infty \}.
$$
We allow each node to interact with every other node, but the strength of the
interactions decay with the distance between them with a spatial decay
controlled by a function $\Gamma:\Z^m \to [0,\infty)$ satisfying certain
properties. To this end, we will use the decay functions introduced in
\cite{JiangDeLaLlaveSRBMeasures} and presented in Section
\ref{Section:DecayFuns}. We will work with differentiable maps $F: \ell^\infty
\to \ell^\infty $ such that the derivative of the component of $F$ corresponding
to the $i$-th node with respect to the variable $x_j$ of the $j$-th node
satisfies
$$
\Big|\frac{\partial F_i}{\partial x_j} \Big| \le C\Gamma(i-j).
$$

More generally, we will work with spaces $C^r_\Gamma(\ell^\infty,\ell^\infty )$
of $C^r $ functions having decay properties. To be able to work with them, first
we have to introduce linear and multilinear maps with decay in $\ell^\infty $
spaces. See Sections \ref{Section:LinearAndMultilinear} and
\ref{Section:DiffFunDecay} for the precise definitions and properties. These
spaces or similar constructs were introduced in \cite{JiangDeLaLlaveSRBMeasures},
\cite{FontichDelaLlaveMartinFAFramework}, \cite{FontichDelaLlaveMartinHSIM},
\cite{FontichDeLaLlaveSireWhiskeredLattices15}.

In this paper we describe normal forms and Sternberg theorems \cite{SternbergI}
around fixed points in the setting described above, which give differentiable
conjugations of the map to their normal forms in neighbourhoods of attracting
fixed points. In absence of resonances, the normal forms reduce to the
linearisation of the map at the fixed point. For more general fixed points, even
in finite dimensions, the study seems to require the use of differentiable bump
functions in the ambient space, as it is the case in other settings we are aware
of \cite{stern58}, \cite{BanyagaLlaveWayne}, \cite{IlyashenkoYakovenko},
\cite{ChaperonCoudrayInvariantManifolds}. However, such bump functions do not
exist in $\ell^\infty(\R^n)$.

One important consequence of our results is that the normal forms and the
obtained conjugations have the same kind of decay as the original map.

From the differentiable conjugation to the linear map we can obtain several
invariant manifolds: if we can linearise the map, we can find as many manifolds
as linear invariant subspaces the linear map has. Among them, the slow
manifolds. These define the motion which converges to the attractor the slowest,
and contain the dynamics that can be observed in simulations or physical
systems. These manifolds have parameterisations that decay in the aforementioned
sense. This property is important in the study of statistical mechanics, see
\cite{JiangDeLaLlaveSRBMeasures}.

In the study of normal forms and the linearisation procedure we have to deal
with cohomological equations in $\ell^\infty(\R^n)$, in the setting of linear
maps with decay. For this, we use the so-called Sylvester operators (see
\cite{CabreFontichDeLaLlavePMIM1}) and we adapt the theory to work in the space
of $k$-linear maps with decay and study their invertibility properties. To that
end we introduce the $\Gamma$-spectrum in Section
\ref{Preliminaries:Functional}, a tool enabling us to study these operators in
this setting.

%---------------------------------------------------------------------------------

We obtain Sternberg theorems for the conjugation of a map to its
linear part or to its normal form, in the case that the linear part is
a contraction (Poincar\'e domain). Assuming decay properties for the
map we obtain decay properties for the conjugating map. For the
results where we allow the existence of resonances, we use a normal
form theory with decay which we develop here (analogous to the
standard normal form theory around a fixed point) and based on the
use of Sylvester operators in spaces of $k$-linear maps in
$\ell^\infty(\mathbb{R}^n)$, introduced in Section
\ref{Preliminaries:SylvesterInLk}.

We study two cases, the first one for maps that are small perturbations of an
uncoupled map with equal dynamics in each node. For this class of maps we add
conditions on the eigenvalues of the linearisation of the unperturbed map at the
fixed point restricted to a node (all maps are the same in each node).

In the absence of resonances among eigenvalues we have the following result that
gives differentiable conjugation to the linear part of the map. The norm
$\|\cdot\|_\Gamma$ is introduced in Section \ref{Section:LinearAndMultilinear}
and the space of $C^r$ functions with decay $C^r_\Gamma$ is introduced in
Section \ref{Section:DiffFunDecay}.

\begin{theorem}\label{Sternberg:Theorem:Sternberg}
	Let $U$ be an open set of $\ell^\infty(\mathbb{R}^n)$ such that
	$0\in U$. Let $F:U\to\ell^\infty(\mathbb{R}^n)$ be a $C^r_{\Gamma}$
	map of the form $F=F_0+F_1$ where $F_0$ is an uncoupled map and
	$F_0(0)=F_1(0)=0$. Let $A=DF_0(0)$, $B=DF_1(0)$ and $M=A+B$. Assume
	that $A_{ij}=\mathfrak{a}\delta_{ij}$ with $\mathfrak{a}\in
	L(\mathbb{R}^n, \mathbb{R}^n)$.
	
	Let $\spec(\mathfrak{a})=\{\lambda_1,\ldots,\lambda_n\}$,
	$\alpha=\min_{i}|\lambda_i|$, $\beta=\max_{i}|\lambda_i|$,
	$\nu=\frac{\log\alpha}{\log\beta}$ and $r_0=[\nu]+1$. Assume
	\begin{enumerate}[label=(H\arabic*)]
		\item $0< |\lambda_i|<1,\quad 1\leq i\leq
		n$, \label{Sternberg:Theorem:Sternberg:H1}
		\item $\lambda_i\neq \lambda_1^{k_1}\cdots\lambda_n^{k_n},\quad k=(k_1, \ldots, k_n)\in (\mathbb{Z}^+)^n$, $2\leq |k|\leq r_0$, $1\leq i\leq n$, where $|k| = k_1+\ldots+k_n$.\label{Sternberg:Theorem:Sternberg:H2}
	\end{enumerate}
	Then, if $F\in
	C^r_\Gamma(U, \ell^\infty(\mathbb{R}^n))$ with $r\geq r_0$ and
	$\|B\|_\Gamma$ is small enough, there exists $R\in
	C^r_\Gamma(\ell^\infty(\mathbb{R}^n), \ell^\infty(\mathbb{R}^n))$ such
	that $R(0)=0$, $DR(0)=\id$ and
	\[R\circ F=MR\] in some neighborhood $U_1\subseteq U$ of $0$ in
	$\ell^\infty(\mathbb{R}^n)$.
\end{theorem}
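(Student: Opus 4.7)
The plan is to follow the classical two-step Sternberg strategy adapted to the decay setting: first a polynomial change of variables removes all Taylor coefficients of $F$ of orders $2$ through $r_0$ (possible since \ref{Sternberg:Theorem:Sternberg:H2} excludes resonances in this range); then an explicit telescoping series conjugates the resulting order-$r_0$-flat remainder to the linear model. The decay has to be tracked throughout in the $\Gamma$-norm using the multilinear calculus of Section~\ref{Section:LinearAndMultilinear} and the Sylvester machinery of Section~\ref{Preliminaries:SylvesterInLk}.

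For the normal form step, I will first observe that because $A$ is block diagonal with copies of $\mathfrak{a}$, its $\Gamma$-spectrum coincides with $\spec(\mathfrak{a})=\{\lambda_1,\dots,\lambda_n\}$, so for $\|B\|_\Gamma$ sufficiently small the $\Gamma$-spectrum of $M=A+B$ stays in a prescribed neighbourhood of that finite set, preserving both \ref{Sternberg:Theorem:Sternberg:H1} and the finitely many non-resonance inequalities \ref{Sternberg:Theorem:Sternberg:H2}. I then construct inductively, for $k=2,3,\ldots,r_0$, a polynomial near-identity change $\Psi_k=\id+P_k$ with $P_k$ a homogeneous $k$-linear map with decay, chosen as the solution of a Sylvester-type cohomological equation of the form $\mathcal{S}_M^k P_k=Q_k$, where $Q_k$ collects the degree-$k$ part of the current conjugate of $F$. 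Invertibility of $\mathcal{S}_M^k$ in the decay category under \ref{Sternberg:Theorem:Sternberg:H2} — developed in Section~\ref{Preliminaries:SylvesterInLk} — gives $P_k$ uniquely and keeps it in the decay class. Composing these yields a $C^r_\Gamma$ conjugacy $\Psi=\Psi_{r_0}\circ\cdots\circ\Psi_2$ such that $\wt{F}:=\Psi\circ F\circ\Psi^{-1}$ has the form $\wt{F}(x)=Mx+g(x)$ with $g\in C^r_\Gamma$ flat to order $r_0$ at the origin.

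For the smooth linearisation step I look for $R_1=\id+\phi$ with $\phi$ flat to order $r_0$ satisfying $R_1\circ\wt{F}=M\circ R_1$, an equation equivalent, after expansion, to the linear functional equation
\begin{equation*}
\phi\circ\wt{F}-M\phi=-g.
\end{equation*}
Noting that $M$ is invertible on $\ell^\infty(\R^n)$ with $M^{-1}$ still $\Gamma$-decaying — via $M^{-1}=A^{-1}(\id+A^{-1}B)^{-1}$, a Neumann series whose convergence uses the Tonelli property $\Gamma*\Gamma\le C\Gamma$ — I will take the unique flat formal solution
\begin{equation*}
\phi(x)=\sum_{k=0}^{\infty}M^{-(k+1)}\bigl(g(\wt{F}^{k}(x))\bigr),
\end{equation*}
and verify the functional equation by a direct index shift once convergence is established.

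The core of the proof lies in establishing convergence of this series in $C^r_\Gamma$ on a small ball around $0$. The contraction of $\wt{F}$ near $0$ gives $\|D^s\wt{F}^k\|_\Gamma\lesssim\beta^k$ for every fixed $s$, while $\|M^{-(k+1)}\|_\Gamma\lesssim\alpha^{-(k+1)}$; the flatness of $g$ gives $\|D^p g(y)\|_\Gamma\lesssim\|y\|^{r_0+1-p}$ for $p\le r_0$ and uniform boundedness for $r_0+1\le p\le r$. Plugging these into Fa\`a di Bruno for $D^j(g\circ\wt{F}^k)$ and using $\|\wt{F}^k(x)\|\lesssim\beta^k\|x\|$, every partition with $p$ blocks contributes $\beta^{k(r_0+1)}$ when $p\le r_0$ and $\beta^{kp}$ when $p\ge r_0+1$; after multiplication by $\alpha^{-(k+1)}$ each term is dominated by $(\beta^{r_0+1}/\alpha)^k$, which is geometric precisely because $r_0>\nu=\log\alpha/\log\beta$ forces $\beta^{r_0+1}<\alpha$. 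Setting $R=R_1\circ\Psi$ then concludes the argument. The main obstacle I anticipate is that these Fa\`a di Bruno estimates must be carried out \emph{in the $\Gamma$-norm}, where the combinatorial constants entering through the convolution inequality $\Gamma*\Gamma\le C\Gamma$ grow with the order of differentiation; the substantive technical work is showing that these constants do not spoil the geometric rate $(\beta^{r_0+1}/\alpha)^k$.
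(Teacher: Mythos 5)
Your overall strategy (kill the Taylor coefficients of orders $2$ through $r_0$ by solving Sylvester equations in $L^k_\Gamma$, then conjugate the flat remainder to $M$ by a telescoping geometric series) is the same as the paper's, and your treatment of the normal form step, of the invertibility of $M$ in $L_\Gamma$, and of the stability of the non-resonance conditions under small $\|B\|_\Gamma$ all match. The difference, and the place where your argument has a genuine gap, is the estimate you rely on for the second step: $\|D^s\wt{F}^k\|_\Gamma\lesssim\beta^k$ \emph{uniformly in} $k$. The obstruction is exactly the one you flag at the end but do not resolve, and it is not a matter of combinatorial constants in Fa\`a di Bruno: it is that $\|\id\|_\Gamma=\Gamma(0)^{-1}>1$, so for the uncoupled part one has $\|A\|_\Gamma=\Gamma(0)^{-1}\|\mathfrak{a}\|$, and the only submultiplicativity available (Proposition~\ref{LandM:Proposition:LGAlgebraProperty}) gives
\begin{equation*}
\|D\wt{F}^k(x)\|_\Gamma\leq\prod_{j=0}^{k-1}\|D\wt{F}(\wt{F}^j(x))\|_\Gamma\leq\bigl(\Gamma(0)^{-1}(\beta+\varepsilon)+\mathcal{O}(\delta)\bigr)^k,
\end{equation*}
which grows like $\Gamma(0)^{-k}\beta^k$ and destroys the geometric rate $(\alpha^{-1}\beta^{r_0})^k$ whenever $\Gamma(0)^{-1}$ is large. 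To save your series you would have to bound $D\wt F^k$ as a single object — e.g.\ by expanding $\prod_j\bigl(M+Dg(\wt F^j(x))\bigr)$, using that blocks of consecutive $M$'s satisfy $\|M^a\|_\Gamma\leq\Gamma(0)^{-1}(\beta+\varepsilon)^a+\mathcal{O}(\|B\|_\Gamma)$ with a \emph{single} factor $\Gamma(0)^{-1}$ because $A^a$ is again uncoupled, and that the $Dg$ factors contribute a convergent product — and this uniform-in-$k$ estimate is nowhere carried out in your proposal.

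The paper is structured precisely to avoid ever needing such uniform-in-$k$ bounds. It fixes one integer $m$ with $\Gamma(0)^{-2}(\alpha^{-1}\beta^{r_0})^m<1$, proves that the \emph{single} operator $\mathcal{G}_m(g)=M^{-m}g\circ F^m$ is a contraction on the flat space $\chi^{r,r_0}_\Gamma$ (Lemma~\ref{Sternberg:Lemma:OperatorWD}) — which only requires estimating derivatives of the fixed iterate $F^m$, where the $\Gamma(0)^{-1}$ overheads occur a bounded number of times and are beaten by the $m$-th power of the good ratio — and then sums the resulting operator geometric series (Lemma~\ref{Sternberg:Lemma:RmGammaConverges}). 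The price is an extra step you do not need: having conjugated $F^m$ to $M^m$, the paper must descend from the power to $F$ itself, which it does by running the analogous contraction $\wt{\mathcal{G}}(g)=M^{-1}g\circ F$ in the plain $C^r$ norm (where $\|\id\|=1$ and no such obstruction arises) and identifying the $C^r_\Gamma$ limit for $F^m$ with the $C^r$ limit for $F$ through a subsequence argument. If you intend to keep your direct series in the variable $F$ rather than $F^m$, you must either supply the perturbative product estimate sketched above or adopt the paper's power trick; as written, the convergence of your series in $C^r_\Gamma$ is not established.
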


If we allow for the existence of resonances, i.e. omitting Hypothesis
\ref{Sternberg:Theorem:Sternberg:H2} above, we have an analogous result giving a
differentiable conjugation to a polynomial normal form (Theorem
\ref{Sternberg:Theorem:SternbergNR} in Section \ref{sec:sternberg-thms}).

The second case we consider is non-perturbative and requires conditions over the
$\Gamma$-spectrum of the linear part, introduced in Section
\ref{Preliminaries:Functional}.

\begin{theorem}\label{Sternberg:Theorem:SternbergSpectral}
	Let $U$ be an open set of $\ell^\infty(\mathbb{R}^n)$ such that
	$0\in U$. Let $F\in C^r_\Gamma(U, \ell^\infty(\mathbb{R}^n))$ with
	$F(0)=0$. Let $A=DF(0)$, $\alpha_{\Gamma}=\inf\{|\lambda|\,\vert\, \lambda\in\spec_\Gamma(A)\}$,
	$\beta_{\Gamma}=\sup\{|\lambda|\,\vert\, \lambda\in\spec_\Gamma(A)\}$,
	$\nu=\frac{\log\alpha_{\Gamma}}{\log\beta_{\Gamma}}$ and $r_0=[\nu]+1$. Assume
	\begin{enumerate}[label=(H\arabic*)]
		\item $0\notin\spec_\Gamma(A)\;\;\;$ and $\;\;\;\spec_\Gamma(A)\subset
		\mathbb{D}(0,1)$,\label{Sternberg:Theorem:SternbergSpectral:H1}
		\item $\spec_\Gamma(A)\cap\left(\spec_\Gamma(A)\right)^j=\emptyset$,\qquad
		$2\leq j \leq r_0$.\label{Sternberg:Theorem:SternbergSpectral:H2}
	\end{enumerate}
	Then, if $F\in
	C^r_\Gamma(U, \ell^\infty(\mathbb{R}^n))$ with $r\geq r_0$ there
	exists $R\in C^r_\Gamma(\ell^\infty(\mathbb{R}^n),
	\ell^\infty(\mathbb{R}^n))$ such that $R(0)=0$, $DR(0)=\id$ and
	\[R\circ F=AR\] in a neighborhood $U_1\subset U$ of 0.
\end{theorem}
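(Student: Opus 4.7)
The strategy is the classical two-step Sternberg approach adapted to the decay setting: first use a polynomial change of variables to kill the Taylor terms of $F$ up to order $r_0$, and then solve the remaining conjugacy problem by a contraction mapping argument on the $C^r_\Gamma$ functional space.

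For the normal form step, I would look for $P = \Id + p_2 + \cdots + p_{r_0}$, where $p_k$ is a homogeneous polynomial of degree $k$ valued in $\ell^\infty(\R^n)$ and built from symmetric $k$-linear maps with $\Gamma$-decay, such that $P \circ F \circ P^{-1} = A + N$ with $N(x) = O(\|x\|^{r_0+1})$ in the $C^r_\Gamma$ sense. Removing the order-$k$ term amounts to solving a cohomological equation of Sylvester type $A p_k - p_k \circ A = q_k$ in the space of symmetric $k$-linear maps with $\Gamma$-decay developed in Section~\ref{Preliminaries:SylvesterInLk}. By hypothesis~\ref{Sternberg:Theorem:SternbergSpectral:H2}, the $\Gamma$-spectrum of the associated Sylvester operator (whose eigenvalues are products $\lambda \mu_1 \cdots \mu_k$ with $\lambda \in \spec_\Gamma(A)$ and $\mu_i \in \spec_\Gamma(A)$, divided by $\lambda$) avoids $1$, so this operator is invertible on the decay space, and $p_k$ lies in the appropriate $\Gamma$-class. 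The polynomial $P$ is automatically in $C^\infty_\Gamma$.

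For the conjugation step, rename $\tilde F = P\circ F\circ P^{-1} = A + N$ and look for a conjugation of the form $\tilde R = \Id + h$ with $h$ vanishing to order $r_0+1$ at $0$. The equation $\tilde R\circ \tilde F = A\tilde R$ rearranges to the fixed-point equation
\[
h(x) = A^{-1}\bigl( h(\tilde F(x)) + N(x)\bigr).
\]
On a sufficiently small ball $B_\rho \subset \ell^\infty(\R^n)$ the map $\tilde F$ is a contraction with rate close to $\beta_\Gamma$, and the right-hand side amplifies by a factor of order $\beta_\Gamma^{r_0+1}/\alpha_\Gamma$ on functions vanishing to order $r_0+1$. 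The definition $r_0 = [\nu]+1$ gives $\beta_\Gamma^{r_0} < \alpha_\Gamma$, hence $\beta_\Gamma^{r_0+1}/\alpha_\Gamma < 1$, which is exactly the spectral gap needed for a contraction in an appropriate weighted $C^r_\Gamma$-norm on the closed subset
\[
\mathcal H = \{ h \in C^r_\Gamma(B_\rho,\ell^\infty(\R^n)) : D^j h(0) = 0,\; 0\le j\le r_0\}.
\]
Banach's fixed-point theorem produces a unique $h \in \mathcal H$, and $R := (\Id + h)\circ P$ is the sought conjugation, with $R(0)=0$, $DR(0)=\Id$, and $R\in C^r_\Gamma$. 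A cut-off and extension argument in $\ell^\infty(\R^n)$ is avoided because the statement only requires the conjugation on some neighbourhood $U_1$; one takes $U_1$ inside the ball $B_\rho$ (and extends $R$ beyond $U_1$ arbitrarily using its Taylor polynomial, which preserves the $C^r_\Gamma$ class).

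The main obstacle is checking that the whole construction respects the $\Gamma$-decay. For the normal form step, one has to verify that the inverse of the Sylvester operator is bounded on the space of $k$-linear maps with $\Gamma$-decay; this is exactly what the $\Gamma$-spectrum formalism of Section~\ref{Preliminaries:Functional} was designed to handle, with the non-resonance condition~\ref{Sternberg:Theorem:SternbergSpectral:H2} transcribed into a uniform spectral separation. For the fixed-point step, the delicate point is that the composition operator $h \mapsto h\circ \tilde F$ must preserve $C^r_\Gamma$ and have controlled operator norm; this relies on the chain-rule-type composition properties of $C^r_\Gamma$-maps established in Section~\ref{Section:DiffFunDecay}, together with the $\Gamma$-continuity of $A^{-1}$ on $\ell^\infty(\R^n)$ (an $A$ whose $\Gamma$-spectrum avoids $0$ is invertible in the $\Gamma$-operator algebra). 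Once those ingredients are in place the estimates reduce to those in the classical Sternberg argument, with $|\cdot|$ replaced by $\|\cdot\|_\Gamma$ throughout.
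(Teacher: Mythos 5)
Your two-step outline (polynomial normal form via Sylvester operators, then a fixed-point argument on maps flat to order $r_0$ at the origin) has the same skeleton as the paper's proof, and the normal-form half is essentially right: hypothesis \ref{Sternberg:Theorem:SternbergSpectral:H2} forces $1\notin\spec(\mathcal{S}_{A^{-1},A},L^k_\Gamma)$ by Proposition \ref{Preliminaries:SylvesterGamma:Proposition}, so the cohomological equations are solvable within the decay classes and the polynomial change of variables is $C^\infty_\Gamma$.

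The gap is in the contraction estimate. You assert that $h\mapsto A^{-1}\bigl(h\circ\tilde F+N\bigr)$ contracts at a rate of order $\alpha_\Gamma^{-1}\beta_\Gamma^{r_0+1}$ ``in an appropriate weighted $C^r_\Gamma$-norm''. But $\alpha_\Gamma$ and $\beta_\Gamma$ are $\Gamma$-\emph{spectral} quantities, not norms: in $L_\Gamma$ one has $\|\Id\|_\Gamma=\Gamma(0)^{-1}>1$, and the $\Gamma$-norm is rigidly tied to the decay function and the norm on $\mathbb{R}^n$, so there is no adapted-norm trick making $\|A\|_\Gamma$ close to $r_\Gamma(A)$ and $\|A^{-1}\|_\Gamma$ close to $r_\Gamma(A^{-1})$. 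With your one-step operator the contraction constant is governed by $\|A^{-1}\|_\Gamma\|A\|_\Gamma^{r_0}$ (up to $\mathcal{O}(\delta)$ terms), which need not be less than $1$ under \ref{Sternberg:Theorem:SternbergSpectral:H1}--\ref{Sternberg:Theorem:SternbergSpectral:H2}. The paper circumvents this by passing to a high iterate: using $r_\Gamma(A)=\lim_{n}\|A^n\|_\Gamma^{1/n}$ it chooses $m$ with $\|A^{\pm n}\|_\Gamma\le(r_\Gamma(A^{\pm1})+\varepsilon_1)^n$ for $n\ge m$, and runs the contraction for $\mathcal{G}_m(g)=A^{-m}g\circ F^m$ on $\chi^{r,r_0}_\Gamma$, obtaining a conjugation $S$ of $F^m$ with $A^m$. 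That in turn forces a further step absent from your proposal: to descend from $F^m$ to $F$ one runs the one-step operator $\wt{\mathcal{G}}(g)=A^{-1}g\circ F$ in the plain $C^r$ space, where an adapted norm on $\ell^\infty(\mathbb{R}^n)$ with $\|A^{-1}\|\,\|A\|^{r_0}<1$ \emph{does} exist (only $\spec(A)\subset\spec_\Gamma(A)$ matters there), and then identifies the resulting $C^r$ conjugation with $S$ because the $\Gamma$-iteration is a subsequence of the $C^r$ one. Without the iterate $m$ and this identification argument, the fixed-point step as written does not close.
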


The paper is structured as follows. Section \ref{Section:DecayFuns} provides the
definition of the class of decay functions we will work with and the main
examples of them. Section \ref{Section:LinearAndMultilinear} deals with linear
and multilinear maps with decay while Section \ref{Section:DiffFunDecay} deals
with $C^k$ maps with decay. In Section \ref{Preliminaries:Functional} we
introduce the $\Gamma$-spectrum of linear maps with decay. In Section
\ref{Preliminaries:SylvesterInLk} we study some spectral properties of Sylvester
operators. Finally, Sections \ref{Section:NormalFormsSternberg} and
\ref{sec:sternberg-thms} provide the normal forms and the conjugation
results respectively.

%%%%%%%%%%%%%%%%%%%%%%%%%%%%%%%%%%%%%%%%%%%%%%%%%%%%%%%%%%%%%%%%%%%%%%%%%%%%%%%%%%%%%%%%%%

%%%%%%%%%%%%%%%%%%%%%%%%%%%%%%%%%%%%%%%%%%%%%%%%%%%%%%%%%%%%%%%%%%%%%%%%%%%%%%%%%%%%%%%%%%

\section{Lattices, decay functions and dynamical systems}
\label{Section:DecayFuns}

In this work we will consider dynamical systems in the space of bounded
sequences of points of $\mathbb{R}^n$ with indices in $\mathbb{Z}^m$. That is,
we will work in the infinite product space $(\mathbb{R}^n)^{\mathbb{Z}^m}$,
where as usual we will call {\it node} each individual $\mathbb{R}^n$ in the
lattice. Associated with this space we will consider a decay function, which
will control the strength of the interactions between different nodes.

The space of bounded sequences in the infinite product space
$(\mathbb{R}^n)^{\mathbb{Z}^m}$ is denoted by
$\ell^\infty(\mathbb{R}^n)$ and formally defined as
\begin{equation*}
  \ell^\infty(\mathbb{R}^n)=\left\{
    (x_i)_{i\in\mathbb{Z}^m}\,\vert\, x_i\in\mathbb{R}^n,\, \sup_{i\in\mathbb{Z}^m}\|x_i\|<\infty
  \right\},
\end{equation*}
where $\|\cdot\|$ is a given norm in $\mathbb{R}^n$. We endow
$\ell^\infty(\mathbb{R}^n)$ with the norm
$\|x\|_\infty=\sup_{i\in\mathbb{Z}^m}\|x_i\|$ as usual. Note that if we change
the norm in $\mathbb{R}^n$ we end up with an equivalent norm in
$\ell^\infty(\mathbb{R}^n)$. We denote by
$\mathrm{proj}_i:\ell^\infty(\mathbb{R}^{n})\to\mathbb{R}^n$ the projection onto
the $i$-th component, and the related function,
$\mathrm{emb}_i:\mathbb{R}^n\to\ell^\infty(\mathbb{R}^{n})$ the $i$-th
embedding. They satisfy $\mathrm{proj}_j(\mathrm{emb}_i(u))=0,\,i\neq j$, and
$\mathrm{proj}_i(\mathrm{emb}_i(u))=u$ for every $u\in\mathbb{R}^n$. This
embedding is an isometry if the norm in $\ell^\infty(\mathbb{R}^{n})$ is induced
by the norm considered in $\mathbb{R}^n.$

\subsection{Decay functions in lattices}

\label{Section:DecayFuns}

To be able to define meaningful localised perturbations in
$\ell^\infty(\mathbb{R}^n)$, we consider an appropriate set of weighted Banach
spaces. The main idea is that the coupling term in the perturbed system belongs
to a weighted space, which controls the strength of the interaction between
nodes. We should note that nearest-neighbour coupling (or any other finite rank
coupling) will satisfy these hypotheses.

We will make use of the following decay functions, originally
introduced in \cite{JiangDeLaLlaveSRBMeasures}.

\begin{definition}
  \label{DecayFuns:Definition:DecayFunction} We say that a function
  $\Gamma:\mathbb{Z}^m\to\mathbb{R}^+$ is a decay function when it
  satisfies:
  \begin{enumerate}
  \item $\sum_{k\in \mathbb{Z}^m}\Gamma(k)\leq 1$,
  \item
    $\sum_{k\in\mathbb{Z}^m}\Gamma(i-k)\Gamma(k-j)\leq\Gamma(i-j),\qquad
    i,\,j\in\mathbb{Z}^m.$%\label{DefinitionDecayPropertiesGammaSummability}
  \end{enumerate}
\end{definition}

The first property ensures that interaction propagation related to such a decay
function is finite, while the second property is akin to a triangular inequality
in a discrete lattice. As pointed out by Prof. L. Sadun, the second property can
be interpreted as that the sum of the interactions between two nodes through the
interactions involving third nodes is dominated by the direct interaction
between them.

The following proposition can be found in
\cite{JiangDeLaLlaveSRBMeasures} and provides a family of examples of decay
functions satisfying Definition
\ref{DecayFuns:Definition:DecayFunction}.

\begin{proposition}
  Given $\alpha>m,\theta\geq0$, there exists $a>0$, depending on
  $\alpha,\theta, m$ such that the function defined by
  \[\Gamma(j)=\begin{cases}
    a,\quad &j=0,\\
    a|j|^{-\alpha}e^{-\theta|j|},&j\neq 0,
  \end{cases}\]
  is a decay function on $\mathbb{Z}^m$.
\end{proposition}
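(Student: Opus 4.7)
The plan is to verify the two defining properties of a decay function for $\Gamma$ as given, reducing everything to standard estimates on polynomial–exponential sums on $\Z^m$ and then choosing the constant $a$ small enough at the end.

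For property (1), I would first check that $\sum_{k\in\Z^m}|k|^{-\alpha}e^{-\theta|k|}$ converges. This is immediate from the hypothesis $\alpha>m$ by comparison with $\int_1^\infty r^{m-1-\alpha}e^{-\theta r}\,dr$ after grouping lattice points into spherical shells $\{k:N\le |k|<N+1\}$, whose cardinality is $O(N^{m-1})$. Call the resulting finite sum $S_1$. Then $\sum_k \Gamma(k)=a(1+S_1)$, so choosing $a\le 1/(1+S_1)$ gives (1).

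The main work is property (2). By translation invariance, it suffices to prove that for every $\ell\in\Z^m$,
\[
\sum_{p\in\Z^m}\Gamma(p)\,\Gamma(\ell-p)\ \le\ \Gamma(\ell).
\]
For the exponential part, the triangle inequality $|p|+|\ell-p|\ge|\ell|$ lets me pull out the factor $e^{-\theta|\ell|}$, reducing the problem to bounding the polynomial convolution
\[
T(\ell)\ :=\ \sum_{p\neq 0,\,p\neq \ell}|p|^{-\alpha}|\ell-p|^{-\alpha}
\]
by $C\,|\ell|^{-\alpha}$ with a constant $C$ independent of $\ell$, plus handling the two singular terms $p=0$ and $p=\ell$ (which together contribute $2a\cdot\Gamma(\ell)$ to the convolution). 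The case $\ell=0$ is separate and amounts to $\sum_p\Gamma(p)^2\le\Gamma(0)=a$, which again holds for $a$ small using the convergence established in the previous paragraph.

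For the estimate of $T(\ell)$ with $\ell\neq 0$, I would split $\Z^m\setminus\{0,\ell\}$ into three regions and handle each by an elementary computation. In the region $|p|\le|\ell|/2$ one has $|\ell-p|\ge|\ell|/2$, so $|\ell-p|^{-\alpha}\le 2^{\alpha}|\ell|^{-\alpha}$, and the remaining sum $\sum_{1\le|p|\le|\ell|/2}|p|^{-\alpha}$ is bounded by the total convergent sum $S_1$. The symmetric region $|\ell-p|\le|\ell|/2$ is identical after the change of variable $p\mapsto\ell-p$. In the region where both $|p|\ge|\ell|/2$ and $|\ell-p|\ge|\ell|/2$, I would bound one factor by $(|\ell|/2)^{-\alpha}$ and sum the other, using the integral estimate $\sum_{|p|\ge|\ell|/2}|p|^{-\alpha}\le C\,|\ell|^{m-\alpha}$; the product then gives $C'|\ell|^{m-2\alpha}\le C'|\ell|^{-\alpha}$ because $\alpha>m$ and $|\ell|\ge 1$. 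Combining the three regions yields $T(\ell)\le C|\ell|^{-\alpha}$ for some universal $C=C(\alpha,m)$, and hence
\[
\sum_p\Gamma(p)\Gamma(\ell-p)\ \le\ a^2\bigl(C+2\bigr)|\ell|^{-\alpha}e^{-\theta|\ell|}\ =\ a(C+2)\,\Gamma(\ell).
\]
Choosing $a\le \min\bigl(1/(1+S_1),\,1/(C+2),\,1/(1+S_2)\bigr)$, where $S_2$ is the sum controlling the $\ell=0$ case, simultaneously validates both properties.

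The genuine obstacle is the polynomial convolution bound, i.e.\ showing $T(\ell)\lesssim|\ell|^{-\alpha}$ uniformly; the exponential is easy via the triangle inequality, and the summability (1) is routine. The three-region split described above is the standard device, and the key structural input is precisely the hypothesis $\alpha>m$, which is what keeps the "far" region from dominating.
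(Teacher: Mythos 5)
Your argument is correct, and it is essentially the standard proof: the paper itself does not prove this proposition but defers to \cite{JiangDeLaLlaveSRBMeasures}, where the same strategy is used (pull out the exponential via $|p|+|\ell-p|\ge|\ell|$, bound the polynomial convolution by the three-region split exploiting $\alpha>m$, and shrink $a$ at the end). One cosmetic slip: in the region $0<|p|\le|\ell|/2$ the sum $\sum|p|^{-\alpha}$ should be bounded by $\sum_{p\neq 0}|p|^{-\alpha}$ rather than by your $S_1$, which carries the extra factor $e^{-\theta|p|}$ and is therefore smaller; this changes nothing since both constants are finite under $\alpha>m$.
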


Note that the standard exponential function
$\Gamma(j)=Ce^{-\theta|j|}$ is not a decay function for any $C,
\theta>0$, as proved in \cite{JiangDeLaLlaveSRBMeasures}.

%%%%%%%%%%%%%%%%%%%%%%%%%%%%%%%%%%%%%%%%%%%%%%%%%%%%%%%%%%%%%%%%%%%%%%%%%%%%%%%%%%%%%%%%%%

%%%%%%%%%%%%%%%%%%%%%%%%%%%%%%%%%%%%%%%%%%%%%%%%%%%%%%%%%%%%%%%%%%%%%%%%%%%%%%%%%%%%%%%%%%

\section{Linear and multilinear maps with decay}
\label{Section:LinearAndMultilinear}
To define $C^r$ maps with decay properties in lattices we need to first
introduce spaces of linear and multilinear mappings with suitable decay
properties. Then we can use these definitions to introduce spaces of $C^r$ maps
with these predefined decay properties for its derivatives. In this section we
will define linear maps with decay and its related norm $\|\cdot\|_\Gamma$. From
now on we will use $\|\cdot\|$ to denote the norm induced in the space of linear
or multilinear maps by the same norm in $\ell^\infty(\mathbb{R}^n)$. All decay
functions will satisfy Definition \ref{DecayFuns:Definition:DecayFunction}. We
reproduce some statements from \cite{JiangDeLaLlaveSRBMeasures} and
\cite{FontichDelaLlaveMartinFAFramework}, and provide some details and some
additional results for the convenience of the reader.

\subsection{The space of linear maps with decay}
\label{Section:LinearAndMultilinearWithDecay}
The most natural way to define linear maps with decay is to require the
components of ``infinite matrices'' to have decay properties with respect to
their indices. This is formalised as follows. Given $\|\cdot\|$ a norm in
$\mathbb{R}^n$ we define
\[
L_\Gamma=L_\Gamma(\ell^\infty(\mathbb{R}^n),\ell^\infty(\mathbb{R}^n))=\big\{A\in
L(\ell^\infty(\mathbb{R}^n),\ell^\infty(\mathbb{R}^n))\,\vert\,
\|A\|_\Gamma<\infty\big\},
\]
where
\[
\|A\|_\Gamma=\max\{\|A\|,\gamma(A)\}, 
\]
with $\|A\|$ the operator norm of $A$ and
\[
\gamma(A)=\sup_{i,k\in\mathbb{Z}^m}\sup_{\substack{\|u\|\leq1,\\ \mathrm{proj}_ju=0,\;j\neq
    k}}\|(Au)_{i}\|\Gamma(i-k)^{-1}.
\]

\begin{remark}
  We will use $L_\Gamma$ as an abbreviation for
  $L_\Gamma(\ell^\infty(\mathbb{R}^n),\ell^\infty(\mathbb{R}^n))$. Although the
  definition has been written for $\ell^\infty(\mathbb{R}^n)$, we can define
  linear maps with decay among arbitrary vector subspaces $\mathcal{E},
  \mathcal{F}$ of $\ell^\infty(\mathbb{R}^n)$ as
  \[L_\Gamma=L_\Gamma(\mathcal{E},\mathcal{F})=\big\{A\in
  L(\mathcal{E},\mathcal{F})\,\vert\, \|A\|_\Gamma<\infty\big\}.
  \]
  All results stated for
  $L_\Gamma(\ell^\infty(\mathbb{R}^n),\ell^\infty(\mathbb{R}^n))$ extend in a
  straightforward way to $L_\Gamma(\mathcal{E}, \mathcal{F})$.
\end{remark}

\nocite{JiangDeLaLlaveSRBMeasures}
We can provide an interpretation of $\gamma(A)$ in terms of the elements of the 
infinite dimensional matrix. If we denote
$A_{ij}=\mathrm{proj}_iA\,\mathrm{emb}_j$ then we have
$$
\gamma(A)=\sup_{i,j\in\mathbb{Z}^m}\|A_{ij}\|\Gamma(i-j)^{-1}.
$$

It is worth emphasizing that the elements $A_{ij}$ do not determine $A$. We can
find a specific counter-example in \cite[p.2843]{FontichDelaLlaveMartinFAFramework}

\begin{remark}
  \label{LandM:Remark:UncoupledGamma0}
  Observe that given an uncoupled linear map
  $A_{ij}=\mathfrak{a}\delta_{ij}$, with $\mathfrak{a}\in L(\mathbb{R}^n,
  \mathbb{R}^n)$ we have $A\in
  L_\Gamma(\ell^\infty(\mathbb{R}^n),\ell^\infty(\mathbb{R}^n))$,
  $\gamma(A)=\Gamma(0)^{-1}\|\mathfrak{a}\|$ and
  $\|A\|_\Gamma=\Gamma(0)^{-1}\|\mathfrak{a}\|$.
\end{remark}

Several basic properties follow.

\begin{proposition}\label{LandM:Proposition:LGBanachSpace}
  The space $(L_\Gamma,\,\|\cdot\|_\Gamma)$ is a Banach space.
\end{proposition}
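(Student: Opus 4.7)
The plan is to verify first that $\|\cdot\|_\Gamma$ is a norm on $L_\Gamma$, and then to establish completeness by reducing to the completeness of the ambient space $L(\ell^\infty(\R^n),\ell^\infty(\R^n))$ under the standard operator norm. Verifying the norm axioms is routine: $\gamma$ is a seminorm because it is a supremum of seminorms (positive homogeneity and subadditivity pass through the supremum, nonnegativity is automatic), and since $\|\cdot\|$ is already a norm, the maximum $\max\{\|\cdot\|,\gamma\}$ inherits definiteness from $\|\cdot\|$ and the remaining norm axioms from the fact that both $\|\cdot\|$ and $\gamma$ satisfy them.

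For completeness, let $\{A_n\}\subset L_\Gamma$ be Cauchy with respect to $\|\cdot\|_\Gamma$. Because $\|A\|\leq\|A\|_\Gamma$, the sequence is also Cauchy in the operator norm, and completeness of $L(\ell^\infty(\R^n),\ell^\infty(\R^n))$ produces a limit $A$ with $\|A_n-A\|\to 0$. It remains to check that $A\in L_\Gamma$ and that $\|A_n-A\|_\Gamma\to 0$; both reduce to controlling $\gamma(A_n-A)$. Using the block description $\gamma(B)=\sup_{i,j}\|B_{ij}\|\Gamma(i-j)^{-1}$ noted right after the definition of $\gamma$, together with the observation that operator-norm convergence forces each block $(A_n)_{ij}=\proj_i A_n \emb_j$ to converge in $L(\R^n,\R^n)$ to $A_{ij}$ (since $\proj_i$ and $\emb_j$ have norm at most $1$), I would fix $\varepsilon>0$, pick $N$ such that $\gamma(A_n-A_m)<\varepsilon$ for $n,m\geq N$, and for each fixed $i,j$ let $m\to\infty$ inside
$$\|(A_n-A_m)_{ij}\|\,\Gamma(i-j)^{-1}\leq\gamma(A_n-A_m)<\varepsilon$$
to conclude $\|(A_n-A)_{ij}\|\,\Gamma(i-j)^{-1}\leq\varepsilon$. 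Taking the supremum over $i,j$ yields $\gamma(A_n-A)\leq\varepsilon$ for $n\geq N$, which simultaneously shows $A_n-A\in L_\Gamma$ (hence $A\in L_\Gamma$) and $\gamma(A_n-A)\to 0$; combined with $\|A_n-A\|\to 0$ this gives $\|A_n-A\|_\Gamma\to 0$.

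The one point that requires care, as the excerpt itself emphasizes, is that the blocks $A_{ij}$ do not in general determine $A$, so the limit cannot be constructed by assembling blockwise limits. The two-step approach sidesteps this subtlety: the candidate $A$ is produced globally from operator-norm completeness, and only afterwards is the decay estimate transferred to the limit via the block-level Cauchy inequalities. With this ordering the rest of the argument is a standard manipulation of suprema and pointwise-in-$(i,j)$ continuity, with no delicate interchange of limits required.
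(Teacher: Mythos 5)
Your proof is correct. The paper itself does not reproduce a proof of this proposition (it defers to the cited references \cite{JiangDeLaLlaveSRBMeasures} and \cite{FontichDelaLlaveMartinFAFramework}), but your two-step argument --- obtain the limit from operator-norm completeness, then transfer the decay bound to the limit via the blockwise Cauchy estimates and the identity $\gamma(B)=\sup_{i,j}\|B_{ij}\|\Gamma(i-j)^{-1}$ --- is exactly the standard argument used there, and your remark about why the non-injectivity of $A\mapsto(A_{ij})$ is harmless is the right point to flag.
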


\begin{proposition}[Algebra properties]\label{LandM:Proposition:LGAlgebraProperty}
  Let $A,\,B \in L_{\Gamma}(\ell^\infty(\mathbb{R}^n),
  \ell^\infty(\mathbb{R}^n))$. Then $AB\in
  L_{\Gamma}(\ell^\infty(\mathbb{R}^n),\ell^\infty(\mathbb{R}^n))$ and
  \begin{itemize}
  \item[(1)] $\gamma(AB)\leq \gamma(A)\gamma(B)$,
  \item[(2)] $\|AB\|_{\Gamma}\leq \|A\|_{\Gamma}\|B\|_{\Gamma}$.
  \end{itemize}
\end{proposition}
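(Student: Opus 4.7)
The plan is to prove (1) by passing the matrix multiplication estimate $(AB)_{ij} = \sum_{k}A_{ik}B_{kj}$ through the decay bound, then to deduce (2) from (1) combined with the standard operator-norm submultiplicativity $\|AB\|\le\|A\|\|B\|$. The subtlety is that, as noted in the paper, the entries $A_{ij}=\proj_i A\,\emb_j$ do not in general determine $A$, so one cannot blindly manipulate infinite matrices; the formula above has to be justified on vectors of the form $\emb_j u$ via a convergent series argument.

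First I would fix $j\in\Z^m$ and $u\in\R^n$ with $\|u\|\le 1$, and set $w=\emb_j u$, $v=Bw\in\ell^\infty(\R^n)$. By the definition of $B_{kj}$ one has $\proj_k v = B_{kj}u$, and hence $\|\proj_k v\|\le \gamma(B)\,\Gamma(k-j)\|u\|$. By property (1) of the decay function, $\sum_{k}\Gamma(k-j)\le 1$, so the series $\sum_{k}\emb_k(\proj_k v)$ converges absolutely in the $\ell^\infty$-norm; since its tail $\sup_{|k|>N}\|\proj_k v\|$ tends to $0$, the sum equals $v$. Continuity of $A$ then gives
\[
(ABw)_i=(Av)_i=\sum_{k\in\Z^m}(A\,\emb_k \proj_k v)_i=\sum_{k\in\Z^m}A_{ik}B_{kj}u,
\]
with absolute convergence in $\R^n$.

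The main work is then the estimate, which uses property (2) of the decay function:
\[
\|(ABw)_i\|\le\sum_{k}\|A_{ik}\|\,\|B_{kj}\|\,\|u\|\le\gamma(A)\gamma(B)\|u\|\sum_{k}\Gamma(i-k)\Gamma(k-j)\le\gamma(A)\gamma(B)\Gamma(i-j)\|u\|.
\]
Dividing by $\Gamma(i-j)$ and taking the supremum over $i,j,u$ yields $\gamma(AB)\le\gamma(A)\gamma(B)$, which proves (1) and in particular shows $\gamma(AB)<\infty$. For (2), the classical bound $\|AB\|\le\|A\|\|B\|\le\|A\|_\Gamma\|B\|_\Gamma$ together with $\gamma(AB)\le\gamma(A)\gamma(B)\le\|A\|_\Gamma\|B\|_\Gamma$ gives $\|AB\|_\Gamma=\max\{\|AB\|,\gamma(AB)\}\le\|A\|_\Gamma\|B\|_\Gamma$, in particular $AB\in L_\Gamma$.

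The only real obstacle is the first step: establishing that $v=Bw$ can be legitimately decomposed as a norm-convergent series $\sum_k \emb_k \proj_k v$ so that $A$ may be pulled termwise through the sum. Everything else is a direct application of the two defining properties of a decay function.
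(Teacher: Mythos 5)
Your proof is correct. The paper itself does not include a proof of this proposition (it is reproduced from the cited references \cite{JiangDeLaLlaveSRBMeasures} and \cite{FontichDelaLlaveMartinFAFramework}), but your argument is the standard one: you correctly isolate the only delicate point, namely that although the entries $A_{ij}=\mathrm{proj}_iA\,\mathrm{emb}_j$ do not determine $A$ in general, the vector $v=B\,\mathrm{emb}_ju$ has node components bounded by $\gamma(B)\Gamma(k-j)\|u\|$ and hence admits the norm-convergent decomposition $v=\sum_k\mathrm{emb}_k\mathrm{proj}_kv$, after which property (2) of the decay function yields $\gamma(AB)\leq\gamma(A)\gamma(B)$ and the rest follows from submultiplicativity of the operator norm.
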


\begin{remark}
  Proposition \ref{LandM:Proposition:LGBanachSpace} and Proposition
  \ref{LandM:Proposition:LGAlgebraProperty} imply
  $L_\Gamma(\ell^\infty(\mathbb{R}^n),\ell^\infty(\mathbb{R}^n))$ is a Banach
  algebra. It has a unit element $\id$ but $\|\id\|_\Gamma\neq 1$. This makes
  spectral theory in $L_\Gamma$ less straightforward, since the classic results
  (cf. \cite{RudinFA}) require unit elements with norm 1 in most proofs. There is however
  a standard trick to overcome this difficulty, see Section \ref{Preliminaries:Functional}.
\end{remark}

\begin{proposition}\label{LandM:Lemma:DInvertibleEasyCase}
  Let $M_0\in L_\Gamma(\ell^\infty(\mathbb{R}^n), \ell^\infty(\mathbb{R}^n))$
  invertible such that \[M_0^{-1}\in L_\Gamma(\ell^\infty(\mathbb{R}^n),
    \ell^\infty(\mathbb{R}^n))\] and $M_1\in L_\Gamma(\ell^\infty(\mathbb{R}^n),
  \ell^\infty(\mathbb{R}^n))$ such that $\|M_0^{-1}\|_\Gamma\|M_1\|_\Gamma<1$.
  Then $M=M_0+M_1$ is invertible, $M^{-1}\in L_\Gamma(\ell^\infty(\mathbb{R}^n),
  \ell^\infty(\mathbb{R}^n))$ and
   \begin{equation}
     |\|M^{-1}\|_\Gamma-\|M_0^{-1}\|_\Gamma|\leq \|M^{-1}-M_0^{-1}\|_\Gamma= \mathcal{O}(\|M_1\|_\Gamma).\nonumber
\end{equation}

\end{proposition}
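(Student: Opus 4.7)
The plan is to reduce everything to a Neumann series argument inside the Banach algebra $L_\Gamma$, whose validity is guaranteed by Propositions \ref{LandM:Proposition:LGBanachSpace} and \ref{LandM:Proposition:LGAlgebraProperty}.

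First I would factor $M = M_0(\id + M_0^{-1}M_1)$. By the algebra property, $M_0^{-1}M_1 \in L_\Gamma$ with $\|M_0^{-1}M_1\|_\Gamma \le \|M_0^{-1}\|_\Gamma \|M_1\|_\Gamma =: q < 1$. Then I would set
\[
S_N = \sum_{n=0}^{N} (-1)^n (M_0^{-1}M_1)^n,
\]
and observe that $(S_N)_{N\ge 0}$ is Cauchy in $(L_\Gamma, \|\cdot\|_\Gamma)$ because $\|(M_0^{-1}M_1)^n\|_\Gamma \le q^n$. Completeness of $L_\Gamma$ then furnishes a limit $S \in L_\Gamma$, and a standard telescoping computation shows $(\id + M_0^{-1}M_1)S = S(\id + M_0^{-1}M_1) = \id$. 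Hence $\id + M_0^{-1}M_1$ is invertible in $L_\Gamma$, and so is $M$ with
\[
M^{-1} = S \, M_0^{-1} = \sum_{n=0}^{\infty} (-1)^n (M_0^{-1}M_1)^n M_0^{-1},
\]
the series converging in $\|\cdot\|_\Gamma$.

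Next I would extract the quantitative estimate by subtracting the $n=0$ term:
\[
M^{-1} - M_0^{-1} = \sum_{n=1}^{\infty} (-1)^n (M_0^{-1}M_1)^n M_0^{-1},
\]
so that the algebra property yields
\[
\|M^{-1} - M_0^{-1}\|_\Gamma \le \|M_0^{-1}\|_\Gamma \sum_{n=1}^{\infty} q^n = \frac{\|M_0^{-1}\|_\Gamma^{\,2}\,\|M_1\|_\Gamma}{1-q},
\]
which is $\mathcal{O}(\|M_1\|_\Gamma)$ as $\|M_1\|_\Gamma \to 0$, with $\|M_0^{-1}\|_\Gamma$ fixed. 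Finally, the left-hand inequality $|\,\|M^{-1}\|_\Gamma-\|M_0^{-1}\|_\Gamma\,| \le \|M^{-1}-M_0^{-1}\|_\Gamma$ is just the reverse triangle inequality for $\|\cdot\|_\Gamma$.

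There is no real obstacle here; the only cosmetic subtlety is that $\|\id\|_\Gamma \ne 1$ in $L_\Gamma$ (as noted in the remark preceding the statement), so one must be careful to carry out the Neumann construction purely via the submultiplicativity $\|AB\|_\Gamma \le \|A\|_\Gamma\|B\|_\Gamma$ and completeness, rather than by invoking textbook spectral-radius formulas that assume a unital Banach algebra with unit of norm one. Since the above argument uses only submultiplicativity and completeness, it goes through verbatim.
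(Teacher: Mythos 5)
Your proof is correct and follows essentially the same route as the paper's: factoring $M=M_0(\id+M_0^{-1}M_1)$ and summing the Neumann series in $\|\cdot\|_\Gamma$ via the submultiplicativity from Proposition \ref{LandM:Proposition:LGAlgebraProperty} and the completeness from Proposition \ref{LandM:Proposition:LGBanachSpace}. You simply spell out more details than the paper does (the Cauchy/telescoping argument, the explicit bound $\|M_0^{-1}\|_\Gamma^{2}\|M_1\|_\Gamma/(1-q)$, and the reverse triangle inequality), all of which are consistent with the paper's argument.
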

\begin{proof}
  Since $M_0$ is invertible, we can write
  \[M=M_0\big(\Id + M^{-1}_0{M}_1\big).\] Since
  $\|M_0^{-1}\|_{\Gamma}\|M_1\|_{\Gamma}<1$ we can write $M^{-1}$ as a Neumann series
  as
  \begin{equation}
    M^{-1}=\sum_{j=0}^{\infty}(-M^{-1}_0M_1)^jM_0^{-1}=M_0^{-1}+\sum_{j=1}^\infty(-M_0^{-1}M_1)^jM_0^{-1}\nonumber
  \end{equation}
  which is convergent in $\|\cdot\|_{\Gamma}$.
\end{proof}

%%%%%%%%%%%%%%%%%%%%%%%%%%%%%%%%%%%%%%%%%%%%%%%%%%%%%%%%%%%%%%%%%%%%%%%%%%%%%%%%%%%%%%%%%%%
\subsection{The space of $k$-linear maps with decay}
%%%%%%%%%%%%%%%%%%%%%%%%%%%%%%%%%%%%%%%%%%%%%%%%%%%%%%%%%%%%%%%%%%%%%%%%%%%%%%%%%%%%%%%%%%%

To characterise higher order differentiable functions with decay we
inductively define multilinear maps with decay.
Recall that we can define the space of $k$-linear maps
$L^k(\ell^\infty(\mathbb{R}^n), \ell^\infty(\mathbb{R}^n) )$ via the
identification
\[L^k(\ell^\infty(\mathbb{R}^n), \ell^\infty(\mathbb{R}^n))=L(\ell^\infty(\mathbb{R}^n),
L^{k-1}(\ell^\infty(\mathbb{R}^n), \ell^\infty(\mathbb{R}^n) )). \]
There are $k$ possible identifications defined by the isomorphisms $\iota_j$ as
follows. Given the map
\begin{equation}
  \iota_j:L^k(\ell^\infty(\mathbb{R}^n), \ell^\infty(\mathbb{R}^n) )\to
  L(\ell^\infty(\mathbb{R}^n), L^{k-1}(\ell^\infty(\mathbb{R}^n),
  \ell^\infty(\mathbb{R}^n) )), \quad  1\leq j\leq k,\nonumber
\end{equation}
and $A\in
L^k(\ell^\infty(\mathbb{R}^n),\ell^\infty(\mathbb{R}^n))$, let
\begin{equation}
  \iota_j(A)(w)(v_1,\ldots,v_{k-1})=A(v_1,\ldots,\overbrace{w}^j,\ldots,v_{k-1})  \label{LandM:Definition:InclusionIota}.
\end{equation}
The maps $\iota_j,\, 1\leq j \leq k,$ are isometries in the corresponding
operator norms. We define
\begin{align}
  L_\Gamma^k(\ell^\infty(\mathbb{R}^n), \ell^\infty(\mathbb{R}^n))=\big\{&A\in
  L^k(\ell^\infty(\mathbb{R}^n),\ell^\infty(\mathbb{R}^n) )\,\vert\,
  \nonumber\\ &\iota_p(A)\in
  L_\Gamma(\ell^\infty(\mathbb{R}^n),L^{k-1}(\ell^\infty(\mathbb{R}^n),
  \ell^\infty(\mathbb{R}^n) )), \, 1\leq p\leq k\big\},\nonumber
\end{align}
with the norm
\[
\|A\|_\Gamma=\max\{\|A\|,\gamma(A)\},
\]
where 
\[
\gamma(A)=\max_{1\leq p\leq k}\{\gamma(\iota_p(A))\}.
\]

\begin{remark}
Note that this definition is consistent because we can identify
$L^{k-1}(\ell^\infty(\mathbb{R}^n), \ell^\infty(\mathbb{R}^n))$ with
the $\ell^\infty$ space
$\ell^\infty(L^{k-1}(\ell^\infty(\mathbb{R}^n),\mathbb{R}^n))$.  
\end{remark}

With the definition of this norm we can prove that
$L^k_\Gamma(\ell^\infty(\mathbb{R}^{n}),\ell^\infty(\mathbb{R}^{n}))$
is a Banach space using the same tools as in the proof of Proposition
\ref{LandM:Proposition:LGBanachSpace}.

The following proposition gives bounds to the norm of multilinear contractions.
These bounds are fundamental later on, since multilinear contractions appear
naturally when differentiating repeatedly invariance equations, a basic
step in the study of normal form equations and the parameterisation method.

\begin{proposition}\label{LandM:Proposition:ContractionNorm}
  Let $A\in L^k_\Gamma(\ell^\infty(\mathbb{R}^n),
  \ell^\infty(\mathbb{R}^n))$, $k\geq 2$, and $u_1,\ldots,
  u_p\in\ell^\infty(\mathbb{R}^n)$, $1\leq p\leq k-1$. Then, for any
  permutation of $k$ elements $\tau\in S_k$, the map
\[
B_{\tau, u_1, \ldots, u_p}:\ell^\infty(\mathbb{R}^n)\times\stackrel{(k-p)}{\cdots}\times\ell^\infty(\mathbb{R}^n)
\to\ell^\infty(\mathbb{R}^n)
\]
defined by 
\[
B_{\tau, u_1, \ldots, u_p}(v_1,\ldots,v_{k-p})=A(\tau(v_1,\ldots,v_{k-p},u_1,\ldots,
u_p))
\] 
belongs to $L^{k-p}_\Gamma(\ell^\infty(\mathbb{R}^n),
\ell^\infty(\mathbb{R}^n))$. Moreover
\[
\gamma(B_{\tau, u_1, \ldots, u_p})\leq
\gamma(A)\|u_1\|\cdots\|u_p\|
\]
and
\[
\|B_{\tau, u_1, \ldots, u_p}\|_\Gamma \leq
\|A\|_\Gamma\|u_1\|\cdots \|u_p\|.
\]
\end{proposition}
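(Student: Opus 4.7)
The plan is to establish the two inequalities separately and then combine them via $\|\cdot\|_\Gamma = \max\{\|\cdot\|,\gamma(\cdot)\}$. The operator norm bound $\|B_{\tau,u_1,\ldots,u_p}\| \le \|A\|\|u_1\|\cdots\|u_p\|$ is standard multilinear algebra: the permutation $\tau$ simply relabels arguments, and the factors $\|u_j\|$ come out of the operator norm by multilinearity, leaving a supremum over unit vectors that is bounded by $\|A\|$.

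The core of the proof is the bound $\gamma(B_{\tau,u_1,\ldots,u_p}) \le \gamma(A)\|u_1\|\cdots\|u_p\|$. By definition, $\gamma(B_{\tau,u_1,\ldots,u_p}) = \max_{1\le q\le k-p} \gamma(\iota_q(B_{\tau,u_1,\ldots,u_p}))$, so it suffices to bound each summand. Fixing $q$, the permutation $\tau$ sends the distinguished free slot of $B_{\tau,u_1,\ldots,u_p}$ (the one we fill with $w$ after applying $\iota_q$) to a unique slot $q'\in\{1,\ldots,k\}$ of $A$, and accordingly $\iota_q(B_{\tau,u_1,\ldots,u_p})$ coincides with $\iota_{q'}(A)$ composed with the operation of freezing the remaining $p$ arguments at the vectors $u_j$ in the order prescribed by $\tau$. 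The task reduces to showing $\gamma(\iota_q(B_{\tau,u_1,\ldots,u_p}))\le \gamma(\iota_{q'}(A))\|u_1\|\cdots\|u_p\|$, which in turn is at most $\gamma(A)\|u_1\|\cdots\|u_p\|$ by the definition of $\gamma(A)$.

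For this inequality I would unfold the definition of $\gamma$ on a linear map with values in a multilinear space. Using the identification $L^{k-p-1}(\ell^\infty(\mathbb{R}^n),\ell^\infty(\mathbb{R}^n))\cong\ell^\infty(L^{k-p-1}(\ell^\infty(\mathbb{R}^n),\mathbb{R}^n))$, for $w$ of unit norm supported at node $\ell$ the $i$-th component $(\iota_q(B_{\tau,u_1,\ldots,u_p})(w))_i$ is the $(k-p-1)$-linear form whose norm is the supremum over $\|v_r\|\le 1$ of $\|\proj_i A(\cdots)\|$, with $w$ and the $u_j$ inserted at the slots prescribed by $\tau$. By multilinearity of $A$, I pull the factor $\|u_1\|\cdots\|u_p\|$ out (replacing each non-zero $u_j$ by $u_j/\|u_j\|$; the vanishing case is trivial) and observe that the resulting supremum is a restricted version of the one defining $\|(\iota_{q'}(A)(w))_i\|$, hence bounded by it. The latter is at most $\gamma(\iota_{q'}(A))\,\Gamma(i-\ell)$; dividing by $\Gamma(i-\ell)$ and taking suprema over $i,\ell$ and such $w$ delivers the claim. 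Combining this with the operator norm bound and taking the maximum yields $\|B_{\tau,u_1,\ldots,u_p}\|_\Gamma \le \|A\|_\Gamma\|u_1\|\cdots\|u_p\|$.

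The main obstacle is purely combinatorial bookkeeping: one must verify that for each $q$ the slot $q'$ in $A$ is well-defined and that the ordering of the frozen arguments $u_1,\ldots,u_p$ inside $\iota_{q'}(A)$ matches that prescribed by $\tau$ inside $B_{\tau,u_1,\ldots,u_p}$. The boundary case $p=k-1$, where the contracted map $B_{\tau,u_1,\ldots,u_p}$ is merely linear and the ``inner'' identification degenerates to $\ell^\infty(\mathbb{R}^n)$ itself, deserves a brief separate remark but is covered by the same argument. Once the indices are matched cleanly, no further estimates beyond multilinearity and the definition of the $\gamma$-seminorm are required.
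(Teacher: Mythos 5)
Your proof is correct. The paper itself states this proposition without proof (it is reproduced from the cited references \cite{JiangDeLaLlaveSRBMeasures} and \cite{FontichDelaLlaveMartinFAFramework}), and your argument --- bounding the operator norm by standard multilinearity and bounding $\gamma(\iota_q(B_{\tau,u_1,\ldots,u_p}))$ by unfolding the definition, matching the free slot $q$ to the slot $q'=\tau(q)$ of $A$, normalising the frozen vectors $u_j$, and majorising the restricted supremum by the full one defining $\|(\iota_{q'}(A)(w))_i\|\leq\gamma(\iota_{q'}(A))\Gamma(i-\ell)$ --- is exactly the standard argument those references use, with the degenerate case $p=k-1$ and the case of some $u_j=0$ correctly flagged as trivial.
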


Proposition \ref{LandM:Proposition:ContractionNorm} can be used
to bound $\Gamma$-norms of contractions in such a way that decay
properties can be ignored except for the bound of just one component,
as the next proposition shows.  

{From} Propositions~\ref{LandM:Proposition:LGAlgebraProperty} and 
\ref{LandM:Proposition:ContractionNorm} 
we also obtain
the following composition property, which will prove crucial for later
developments.

Given $A\in
L^k_{\Gamma}(\ell^{\infty}(\mathbb{R}^n),\ell^{\infty}(\mathbb{R}^n))$,
$B_{j} \in
L^{l_j}_{\Gamma}(\ell^{\infty}(\mathbb{R}^n),\ell^{\infty}(\mathbb{R}^n))$
for $j=1, \dots, k$ and $w_{l_j}\in\ell^\infty(\mathbb{R}^n)^{l_j}$,
we define the composition $A B_{1} \cdots B_{k}$ by \[A B_{1} \cdots
B_{k}(w_{l_1},\ldots,w_{l_k})=A(B_1w_{l_1},\ldots, B_kw_{l_k}).\]

\begin{proposition}
  \label{LandM:Proposition:LGAlgebraPropertyMultilinear} If $A\in
  L^k_{\Gamma}(\ell^{\infty}(\mathbb{R}^n),\ell^{\infty}(\mathbb{R}^n))$
  and $B_{j} \in
  L^{l_j}_{\Gamma}(\ell^{\infty}(\mathbb{R}^n),\ell^{\infty}(\mathbb{R}^n))$,
  for $j=1, \dots, k$, then the composition $A B_{1} \cdots B_{k} \in
  L^{l_1+\cdots
    +l_k}_{\Gamma}(\ell^{\infty}(\mathbb{R}^n),\ell^{\infty}(\mathbb{R}^n))$
  and
  \begin{align}
    \label{LandM:Proposition:LGAlgebraPropertyMultilinear:normacompklineals-gamma} \gamma (A B_{1} \cdots B_{k})  \leq
    & \gamma (A) \| B_{1}\|_{\Gamma}\cdots \| B_{k}\|_{\Gamma}, \\
    \label{LandM:Proposition:LGAlgebraPropertyMultilinear:normacompklineals} \| A B_{1} \cdots B_{k}\|_{\Gamma} \leq &
    \| A\|_{\Gamma} \| B_{1}\|_{\Gamma}\cdots \| B_{k}\|_{\Gamma}.
  \end{align}
  A consequence of the proof is that if $B_p=B_q$ for all $q\neq p$ the bounds can be written instead as 
\begin{align}
    \gamma (A B \cdots B)  \leq
    & \gamma (A) \| B\|_{\Gamma}\|B\|\cdots \|B\|, \nonumber\\
    \| A B\cdots B\|_{\Gamma} \leq &
    \| A\|_{\Gamma} \| B\|_{\Gamma}\|B\|\cdots \|B\|.\nonumber
  \end{align}
\end{proposition}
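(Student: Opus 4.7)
The plan is to bound the operator norm by a standard submultiplicative estimate and then concentrate the work on the $\gamma$-bound, which is the substantive part. For the operator norm, given arguments $w^{(j)}_r$ with $\|w^{(j)}_r\|\leq 1$,
$$\|A(B_1 w^{(1)}, \ldots, B_k w^{(k)})\| \leq \|A\| \prod_{j=1}^k \|B_j w^{(j)}\| \leq \|A\| \prod_{j=1}^k \|B_j\|,$$
so $AB_1\cdots B_k$ is bounded multilinear of arity $l_1+\cdots+l_k$ with the expected operator norm bound.

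For the $\gamma$-estimate I would compute $\gamma(\iota_p(AB_1\cdots B_k))$ for each $p\in\{1,\ldots,l_1+\cdots+l_k\}$ and take the maximum. Writing $p=l_1+\cdots+l_{j_0-1}+s$ with $1\leq s\leq l_{j_0}$, position $p$ sits in the $s$-th slot of $B_{j_0}$. Fix arguments $w^{(j)}_r$ of norm at most one at every position other than $p$, and feed $\emb_{k'}(u)$ with $\|u\|\leq 1$ into position $p$. The composition then produces $A(B_1 w^{(1)},\ldots, z,\ldots, B_k w^{(k)})$, where the $j_0$-th input is
$$z := B_{j_0}(w^{(j_0)}_1,\ldots,\emb_{k'}(u),\ldots,w^{(j_0)}_{l_{j_0}}) \in \ell^\infty(\mathbb{R}^n).$$
By $\gamma(\iota_s(B_{j_0}))\leq \gamma(B_{j_0})$ one has $\|\proj_q(z)\|\leq \gamma(B_{j_0})\Gamma(q-k')$. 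Decomposing $z=\sum_q \emb_q(\proj_q(z))$ and using multilinearity of $A$, together with $\gamma(\iota_{j_0}(A))\leq \gamma(A)$ applied to the now node-concentrated $j_0$-th slot, yields
$$\|\proj_i A(B_1 w^{(1)},\ldots,\emb_q(\proj_q(z)),\ldots, B_k w^{(k)})\| \leq \gamma(A)\Gamma(i-q)\|\proj_q(z)\|\prod_{j\neq j_0}\|B_j\|.$$
Summing over $q$ and invoking property (2) of Definition \ref{DecayFuns:Definition:DecayFunction}, $\sum_q \Gamma(i-q)\Gamma(q-k')\leq\Gamma(i-k')$, gives
$$\|\proj_i\bigl(\iota_p(AB_1\cdots B_k)(\emb_{k'}(u))(\,\cdot\,)\bigr)\| \leq \gamma(A)\gamma(B_{j_0})\Gamma(i-k')\prod_{j\neq j_0}\|B_j\|.$$
Taking the supremum over admissible $i$, $k'$, $u$ and unit remaining arguments, then the maximum over $p$, produces \eqref{LandM:Proposition:LGAlgebraPropertyMultilinear:normacompklineals-gamma}; combining with the operator norm bound yields \eqref{LandM:Proposition:LGAlgebraPropertyMultilinear:normacompklineals}.

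The sharper variant with $B_1=\cdots=B_k=B$ is immediate from the same computation: only the block containing position $p$ contributes $\|B\|_\Gamma$ (through $\gamma(B_{j_0})\leq\|B\|_\Gamma$), while the other $k-1$ blocks appear only through $\|B\|$. The anticipated obstacle is essentially bookkeeping: aligning the global position $p$ with the local slot $s$ inside $B_{j_0}$, and making sure that while we exploit $\gamma(B_{j_0})$ in its $s$-th variable and $\gamma(A)$ in its $j_0$-th variable, the remaining arguments are absorbed by operator norms in the spirit of Proposition \ref{LandM:Proposition:ContractionNorm}. The essential analytic ingredient is the decay-function convolution bound, which is precisely what lets the intermediate summation over the node $q$ of $z$ close up into a single $\Gamma(i-k')$.
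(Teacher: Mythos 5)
Your argument is correct and follows essentially the same route as the paper, which derives the proposition from the linear algebra property (Proposition \ref{LandM:Proposition:LGAlgebraProperty}) together with the contraction estimates of Proposition \ref{LandM:Proposition:ContractionNorm}; your direct computation simply unfolds the same node decomposition and the convolution inequality $\sum_{q}\Gamma(i-q)\Gamma(q-k')\leq\Gamma(i-k')$ that drive those proofs, and it yields the sharper single-$\gamma(B_{j_0})$ bound that gives the $B_p=B_q$ variant. The one step worth making explicit is that the expansion $z=\sum_{q}\emb_q(\proj_q z)$ converges in the $\ell^\infty$ norm --- a general element of $\ell^\infty(\mathbb{R}^n)$ admits no such expansion, which is precisely why the entries $A_{ij}$ do not determine $A$ --- but your prior bound $\|\proj_q z\|\leq\gamma(B_{j_0})\Gamma(q-k')$ together with $\sum_q\Gamma(q)\leq 1$ gives absolute convergence, so the interchange of the sum with $A$ is legitimate.
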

An important special case of the previous proposition is the following
result, which follows from the proofs of Propositions \ref{LandM:Proposition:LGAlgebraProperty} and \ref{LandM:Proposition:LGAlgebraPropertyMultilinear}.

\begin{corollary}
  If $A\in L_\Gamma(\ell^\infty(\mathbb{R}^n), \ell^\infty(\mathbb{R}^n))$ and $B\in L^k_\Gamma(\ell^\infty(\mathbb{R}^n), \ell^\infty(\mathbb{R}^n))$ then $A\cdot B\in L^k_\Gamma(\ell^\infty(\mathbb{R}^n), \ell^\infty(\mathbb{R}^n))$ and
  \begin{align}
    \gamma(A\, B)\leq \gamma(A)\gamma(B),\nonumber\\
    \|A\, B\|_\Gamma\leq \|A\|_\Gamma\|B\|_\Gamma.\nonumber
\end{align}
\end{corollary}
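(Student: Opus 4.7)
The plan is to adapt the argument used for Proposition \ref{LandM:Proposition:LGAlgebraProperty} (the linear-linear case) to the present situation, obtaining the sharper bound $\gamma(AB) \leq \gamma(A)\gamma(B)$ in place of the $\gamma(A)\|B\|_\Gamma$ that would follow by applying Proposition \ref{LandM:Proposition:LGAlgebraPropertyMultilinear} with a single factor. The operator norm inequality $\|AB\| \leq \|A\|\|B\|$ is standard for the composition of a linear map with a $k$-linear map, so the entire content is in estimating the decay seminorm.

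Since $\gamma(AB) = \max_{1 \leq p \leq k} \gamma(\iota_p(AB))$, I fix an index $p$. Take $w \in \ell^\infty(\mathbb{R}^n)$ with $\mathrm{proj}_s w = 0$ for $s \neq j$ and $\|w\| \leq 1$, and test vectors $v_1, \ldots, v_{k-1}$ of norm at most one, and set
\[
z = \iota_p(B)(w)(v_1, \ldots, v_{k-1}) \in \ell^\infty(\mathbb{R}^n).
\]
The definition of $\gamma(B)$ gives directly $\|z_q\| \leq \gamma(\iota_p(B))\,\Gamma(q-j) \leq \gamma(B)\,\Gamma(q-j)$ for every $q \in \mathbb{Z}^m$. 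By property (1) of decay functions, $\sum_q \Gamma(q-j) \leq 1$, so the series $\sum_q \mathrm{emb}_q(z_q)$ is absolutely convergent in $\ell^\infty(\mathbb{R}^n)$; since its $q$-th projection is $z_q$, its sum equals $z$. Applying linearity and continuity of $A$, and writing $A_{iq} = \mathrm{proj}_i \circ A \circ \mathrm{emb}_q$, we obtain $(Az)_i = \sum_q A_{iq}\, z_q$, hence
\[
\|(Az)_i\| \leq \sum_q \|A_{iq}\|\,\|z_q\| \leq \gamma(A)\gamma(B) \sum_q \Gamma(i-q)\Gamma(q-j) \leq \gamma(A)\gamma(B)\,\Gamma(i-j),
\]
by property (2). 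Since $(Az)_i = (\iota_p(AB)(w)(v_1, \ldots, v_{k-1}))_i$, taking suprema first over $v_1, \ldots, v_{k-1}$, then over $i, j, w$, and finally over $p$, yields $\gamma(AB) \leq \gamma(A)\gamma(B)$. Combined with $\|AB\| \leq \|A\|\|B\|$, this gives $\|AB\|_\Gamma \leq \|A\|_\Gamma\|B\|_\Gamma$.

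The main subtlety, already flagged in the remark preceding the statement, is that the block entries $A_{iq}$ do not determine a general $A \in L_\Gamma$, so the expansion $(Az)_i = \sum_q A_{iq} z_q$ is not automatic for an arbitrary $z \in \ell^\infty(\mathbb{R}^n)$. What rescues the argument is that the particular $z$ at hand is not arbitrary: it inherits decay from $B$, and that decay is summable by property (1) of the decay function. This summability is exactly what is needed to express $z$ as a norm-convergent sum of single-node vectors, which in turn licenses commuting $A$ past the sum and reduces the estimate to a bare application of properties (1) and (2).
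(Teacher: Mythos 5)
Your proof is correct, and it follows the route the paper itself indicates (the corollary is stated as a consequence of the proofs of Propositions \ref{LandM:Proposition:LGAlgebraProperty} and \ref{LandM:Proposition:LGAlgebraPropertyMultilinear}, whose decay estimates rest on exactly the decomposition $z=\sum_q \mathrm{emb}_q(z_q)$ and properties (1)--(2) of $\Gamma$ that you use). You also correctly identify and resolve the one genuine subtlety, namely that the matrix expansion $(Az)_i=\sum_q A_{iq}z_q$ is only licensed here because $z$ inherits summable decay from $B$.
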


%%%%%%%%%%%%%%%%%%%%%%%%%%%%%%%%%%%%%%%%%%%%%%%%%%%%%%%%%%%%%%%%%%%%%%%%%%%%%%%%%%%%%%%%%%

%%%%%%%%%%%%%%%%%%%%%%%%%%%%%%%%%%%%%%%%%%%%%%%%%%%%%%%%%%%%%%%%%%%%%%%%%%%%%%%%%%%%%%%%%%

\section{Spaces of differentiable functions with decay}
\label{Section:DiffFunDecay}

With the definitions of linear and multilinear applications with decay from the
previous section we are prepared to define spaces of differentiable functions
with decay whose domain is an open set in $\ell^\infty(\mathbb{R}^n)$.

\begin{definition}
  Let $U$ be an open set of $\ell^\infty(\mathbb{R}^n)$. We define
  \begin{align}
    C^1_\Gamma(U,\ell^\infty(\mathbb{R}^n))=\{&F\in
    C^1(U,\ell^\infty(\mathbb{R}^n))\,\vert\, \sup_{x\in
      U}\|F(x)\|_\infty<\infty,\nonumber\\ &DF(x)\in
    L_\Gamma(\ell^\infty(\mathbb{R}^{n}),\ell^\infty(\mathbb{R}^{n})),\forall
    x\in U,\nonumber\\ & \sup_{x\in
      U}\|DF(x)\|_\Gamma<\infty\}\nonumber
  \end{align}
  with norm 
  \begin{align}
    \|F\|_{C^1_\Gamma }=\max\big(\|F\|_{C^0}, \sup_{x\in U}\|DF(x)\|_\Gamma\big),\nonumber
  \end{align}
  where $\|F\|_{C^0}=\sup_{x\in U}\|F(x)\|_{\infty}$ as usual. We can also define 
  \begin{align}
    C^1_\Gamma(U,L^k(\ell^\infty(\mathbb{R}^n),
    \ell^\infty(\mathbb{R}^n)))=\big\{&F\in
      C^1(U,L^k(\ell^\infty(\mathbb{R}^n),\ell^\infty(\mathbb{R}^n)))\,\vert\,\nonumber\\
      &F(x)\in
      L^k_\Gamma(\ell^\infty(\mathbb{R}^n),\ell^\infty(\mathbb{R}^n)),\,
      \forall x\in U\nonumber,\\ &\sup_{x\in U}\|F(x)\|_\Gamma<\infty\big\}.\nonumber
  \end{align}
  This definition is consistent since
$$
L^k(\ell^\infty(\mathbb{R}^n),\ell^\infty(\mathbb{R}^n)) \sim 
\ell^\infty(L^k(\ell^\infty(\mathbb{R}^n),\mathbb{R}^n)).
$$
  
  Based on the above, we define spaces of $C^r_\Gamma$ functions as:
  \begin{align*}
    C^r_\Gamma(U,\ell^\infty(\mathbb{R}^n))=\big\{F\in
    C^r(U,\ell^\infty(\mathbb{R}^n))\,\vert\, D^kF\in 
    C^1_\Gamma(U, & L^k(\ell^\infty(\mathbb{R}^{n}),\ell^\infty(\mathbb{R}^{n}))), \\
    & \,0\leq k \leq r-1\big\}
  \end{align*}
  with norm
  \begin{align}
    \|F\|_{C^r_\Gamma}=\max \left(\|F\|_{C^0},\max_{0\leq k\leq r-1}\sup_{x\in U}\|D D^kF(x)\|_\Gamma\right).\nonumber
  \end{align}
\end{definition}
\begin{remark}
  The inclusions $C^i_\Gamma \subset C^{i-1}_\Gamma,\, 1\leq i\leq r,$
are satisfied.
\end{remark}

It is easy to check that $C^r_\Gamma(U,\ell^\infty(\mathbb{R}^n))$ is
a Banach space. We have the following result concerning the
composition of maps, which can be found in
\cite{FontichDelaLlaveMartinFAFramework}.

\begin{proposition}%\label{DiffFunDecay:Proposition:Composition}
  Let $U, V$ be open sets of $\ell^\infty(\mathbb{R}^n)$, $F\in
  C^r_\Gamma(U,\ell^\infty(\mathbb{R}^n))$ and $G\in C^r_\Gamma(V,
  \ell^\infty(\mathbb{R}^n))$ such that $F(U)\subseteq V$. Then
\begin{itemize}
\item[(1)] $G\circ F \in C^r_\Gamma(U, \ell^\infty(\mathbb{R}^n) )$,
\item[(2)] $\|G\circ F\|_{C^r_\Gamma}\leq
  K(1+\|F\|^r_{C^r_\Gamma})\|G\|_{C^r_\Gamma}$.
\end{itemize}
\end{proposition}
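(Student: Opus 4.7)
The plan is to proceed by induction on $r$, using Faà di Bruno's formula to reduce higher derivatives of $G\circ F$ to sums of compositions of multilinear maps with decay, and then invoking the composition estimates established in Proposition \ref{LandM:Proposition:LGAlgebraPropertyMultilinear}. For the base case $r=1$, the chain rule gives $D(G\circ F)(x) = DG(F(x))\cdot DF(x)$; by Proposition \ref{LandM:Proposition:LGAlgebraProperty} this product lies in $L_\Gamma$ with
\[
\|D(G\circ F)(x)\|_\Gamma \le \|DG(F(x))\|_\Gamma\,\|DF(x)\|_\Gamma \le \|G\|_{C^1_\Gamma}\|F\|_{C^1_\Gamma},
\]
and composing with $\|G\circ F\|_{C^0}\le\|G\|_{C^0}$ produces the desired $C^1_\Gamma$ estimate.

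For $r\ge 2$ I would apply Faà di Bruno to write
\[
D^k(G\circ F)(x) = \sum_{\pi\in\mathcal{P}(k)} c_\pi\, D^{|\pi|}G(F(x))\bigl(D^{|B_1|}F(x),\ldots,D^{|B_{|\pi|}|}F(x)\bigr),
\]
where $\mathcal{P}(k)$ is the collection of partitions of $\{1,\ldots,k\}$ into nonempty blocks $B_1,\ldots,B_{|\pi|}$ and $c_\pi$ are the usual combinatorial constants. Each summand is a composition of the type covered by Proposition \ref{LandM:Proposition:LGAlgebraPropertyMultilinear}, hence lies in $L^k_\Gamma$ with
\[
\bigl\|D^{|\pi|}G(F(x))\bigl(D^{|B_1|}F(x),\ldots,D^{|B_{|\pi|}|}F(x)\bigr)\bigr\|_\Gamma \le \|D^{|\pi|}G(F(x))\|_\Gamma \prod_{i=1}^{|\pi|}\|D^{|B_i|}F(x)\|_\Gamma.
\]
Since $|\pi|\le k\le r$ and $|B_i|\le k\le r$, each factor is dominated by $\|G\|_{C^r_\Gamma}$ or $\|F\|_{C^r_\Gamma}$, and the number of partitions depends only on $r$. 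Summing and including the $C^0$ bound yields $\|G\circ F\|_{C^r_\Gamma}\le K(1+\|F\|_{C^r_\Gamma}^r)\|G\|_{C^r_\Gamma}$ with $K=K(r)$.

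The main obstacle is not the pointwise bound but the verification that $x\mapsto D^k(G\circ F)(x)$ is of class $C^1_\Gamma$, so that the induction can continue up to $k=r-1$. One must show that for each fixed partition $\pi$ the map $x\mapsto D^{|\pi|}G(F(x))(D^{|B_1|}F(x),\ldots,D^{|B_{|\pi|}|}F(x))$ is $C^1$ as a map into $L^k_\Gamma$ (not merely $L^k$) and that its derivative admits an analogous multilinear decomposition. This in turn reduces to two sub-claims: first, the substitution $x\mapsto D^jG(F(x))$ is $C^1_\Gamma$ when $G\in C^r_\Gamma$, which follows by a separate chain-rule argument combined with Proposition \ref{LandM:Proposition:LGAlgebraPropertyMultilinear}; and second, the multilinear composition map $(A,B_1,\ldots,B_j)\mapsto A(B_1,\ldots,B_j)$ is smooth in the corresponding $\|\cdot\|_\Gamma$ norms, which is immediate from its multilinearity together with the norm estimates of Proposition \ref{LandM:Proposition:LGAlgebraPropertyMultilinear}. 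The induction must therefore be set up with a strengthened hypothesis stating both the bound and the regularity in $\|\cdot\|_\Gamma$ simultaneously, otherwise the argument becomes circular.
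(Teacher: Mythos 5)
Your argument is correct and is essentially the approach the paper relies on: the paper itself only cites \cite{FontichDelaLlaveMartinFAFramework} for this proposition, but it carries out exactly this Fa\`a di Bruno decomposition combined with Proposition \ref{LandM:Proposition:LGAlgebraPropertyMultilinear} when it estimates $\|g\circ F^m\|_{C^r_\Gamma}$ in the proof of Lemma \ref{Sternberg:Lemma:OperatorWD}. The only comment is that the obstacle you raise in your final paragraph is lighter than you make it: by the paper's definition, $C^1_\Gamma(U,L^k(\ell^\infty(\mathbb{R}^n),\ell^\infty(\mathbb{R}^n)))$ asks only for $C^1$ dependence in the plain operator norm together with pointwise membership in $L^k_\Gamma$ and a uniform bound on $\|\cdot\|_\Gamma$, so the classical chain rule already gives the required regularity and no strengthened induction hypothesis about differentiability in the $\|\cdot\|_\Gamma$ topology is needed.
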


\begin{remark}\label{DiffFunDecay:Remark:CompositionLinearMap}
  An important particular case appears when $G$ is a linear map in
  $L_\Gamma(\ell^\infty(\mathbb{R}^n), \ell^\infty(\mathbb{R}^n))$. In
  this case the estimates in the proof are much easier and the bound
  is
  \begin{equation}
    \|A \circ F\|_{C^r_\Gamma}\leq \|A\|_\Gamma\|F\|_{C^r_\Gamma}.%\label{DiffFunDecay:Proposition:Composition:Equation:WithLinear}
  \end{equation}
\end{remark}

\begin{theorem}[Inverse Function Theorem]\label{DiffFunDecay:InverseFunctionTheorem}
  Let $U$ be an open set of $\ell^\infty(\mathbb{R}^n)$ and $F\in
  C^r_\Gamma(U, \ell^\infty(\mathbb{R}^n))$, $r\geq 1$. Let $p\in U$
  and $q=F(p)$. Assume that $DF(p)$ is invertible and $DF(p)^{-1}\in
  L_\Gamma(\ell^\infty(\mathbb{R}^n),
  \ell^\infty(\mathbb{R}^n))$. Then $F$ is locally invertible around
  $p$ and $F^{-1}\in C^r_\Gamma(V, \ell^\infty(\mathbb{R}^n))$, where
  $V$ is a suitable neighbourhood of $q$.
\end{theorem}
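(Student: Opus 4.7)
The plan is to run the classical Banach contraction-mapping proof of the inverse function theorem, checking at each step that the $\Gamma$-structure is preserved, with Proposition~\ref{LandM:Lemma:DInvertibleEasyCase} as the key tool for passing information about decay from $DF(p)^{-1}$ to $DF(x)^{-1}$ for nearby $x$.

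First I would normalize. The map $\tilde F(x) = DF(p)^{-1}(F(x+p) - q)$ belongs to $C^r_\Gamma$ on a neighborhood of $0$ by Remark~\ref{DiffFunDecay:Remark:CompositionLinearMap} applied to the composition of $F$ with an affine shift and the $L_\Gamma$ linear map $DF(p)^{-1}$, and it satisfies $\tilde F(0) = 0$ and $D\tilde F(0) = \id$. Inverting $F$ reduces to inverting $\tilde F$ and composing back with $DF(p)^{-1}$ and translations, which again stays inside $C^r_\Gamma$. Writing $\tilde F = \id + G$ with $G(0) = 0$ and $DG(0) = 0$, I would choose a small ball $B_\rho$ on which $\|DG(x)\| \leq 1/2$ and also $\|DG(x)\|_\Gamma$ is small enough that Proposition~\ref{LandM:Lemma:DInvertibleEasyCase} applies to $\id + DG(x)$, using $\|\id\|_\Gamma = \Gamma(0)^{-1}$ from Remark~\ref{LandM:Remark:UncoupledGamma0}. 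For $y$ in a suitably small ball $V$ around $0$, the map $T_y(x) = y - G(x)$ is a contraction of $B_\rho$ into itself in $\|\cdot\|_\infty$, so the Banach fixed point theorem supplies a unique $x = \tilde F^{-1}(y)$, with Lipschitz dependence on $y$.

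Next I would upgrade to $C^1_\Gamma$. The usual argument shows $\tilde F^{-1}$ is differentiable with
\[
D\tilde F^{-1}(y) = (D\tilde F(\tilde F^{-1}(y)))^{-1} = (\id + DG(\tilde F^{-1}(y)))^{-1}.
\]
Proposition~\ref{LandM:Lemma:DInvertibleEasyCase}, with $M_0 = \id$ and $M_1 = DG(\tilde F^{-1}(y))$, forces this inverse to lie in $L_\Gamma$ with a uniform $\Gamma$-norm bound, so $\tilde F^{-1} \in C^1_\Gamma(V, \ell^\infty(\mathbb{R}^n))$. For higher regularity I would bootstrap through the identity
\[
D\tilde F^{-1} = \mathrm{inv} \circ D\tilde F \circ \tilde F^{-1},
\]
where $\mathrm{inv}: A \mapsto A^{-1}$ is smooth on the open subset of $L_\Gamma$ consisting of the $L_\Gamma$-invertibles (locally expressed by the Neumann series appearing in the proof of Proposition~\ref{LandM:Lemma:DInvertibleEasyCase}, whose term-by-term derivatives remain convergent in $\|\cdot\|_\Gamma$). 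Together with the $C^r_\Gamma$ composition proposition, an induction on $k \leq r-1$ promotes $\tilde F^{-1}$ from $C^k_\Gamma$ to $C^{k+1}_\Gamma$.

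The main obstacle specific to this infinite-dimensional lattice setting, absent from the finite-dimensional proof, is ensuring that $DF(x)^{-1}$ stays in $L_\Gamma$ with a uniform $\Gamma$-norm bound in a neighborhood of $p$: the operator-norm inverse exists by the classical argument, but it is not automatic that it has decay. Proposition~\ref{LandM:Lemma:DInvertibleEasyCase} is precisely the tool that resolves this, and the normalization $D\tilde F(0) = \id$ is what lets us apply it with the trivially $L_\Gamma$-invertible base point $\id$.
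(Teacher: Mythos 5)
Your proposal is correct and follows essentially the same route as the paper: existence of the local inverse by the classical Banach-space argument, membership of $DF^{-1}(x)$ in $L_\Gamma$ by a Neumann-series perturbation argument (the paper phrases this as upper semicontinuity of $\spec_\Gamma$ via Proposition~\ref{Preliminaries:Operational:Prop:Continuity}, while you normalize to $D\tilde F(0)=\id$ and invoke Proposition~\ref{LandM:Lemma:DInvertibleEasyCase} directly, which is the same mechanism), and then an inductive bootstrap of higher derivatives through the identity $DF^{-1}(x)=\bigl(DF(F^{-1}(x))\bigr)^{-1}$ using the algebra properties of $L^k_\Gamma$. No substantive gap.
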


  We defer the proof of this result until Section \ref{Preliminaries:Functional}.

%%%%%%%%%%%%%%%%%%%%%%%%%%%%%%%%%%%%%%%%%%%%%%%%%%%%%%%%%%%%%%%%%%%%%%%%%%%%%%%%%%%%%%%%%%

  %%%%%%%%%%%%%%%%%%%%%%%%%%%%%%%%%%%%%%%%%%%%%%%%%%%%%%%%%%%%%%%%%%%%%%%%%%%%%%%%%%%%%%%%%%

\section{Spectral theory for $\Gamma$-coupled linear maps}
\label{Preliminaries:Functional}
In this section we recall some results in spectral theory of linear
operators and also introduce a new notion, the $\Gamma$-spectrum of a
linear operator in a lattice, associated to a decay function $\Gamma$
satisfying the definition introduced in Section
\ref{Section:DecayFuns}.

Given a Banach space $E$ the space of continuous linear maps $L(E, E)$
is also a Banach space with the standard operator norm. Moreover, it is
a Banach algebra with the product given by the composition of maps. 

Let $E=\ell^\infty(\mathbb{R}^n)$. Given a decay function $\Gamma$ as
in Definition \ref{DecayFuns:Definition:DecayFunction}, the space
$L_\Gamma(E,E)$ introduced in Section
\ref{Section:LinearAndMultilinearWithDecay} is a Banach algebra (see
Proposition \ref{LandM:Proposition:LGAlgebraProperty}).

The inclusion $L_\Gamma(E,E)\subset L(E,E)$ holds considering both spaces as
sets, but $L_\Gamma(E,E)$ is not a closed subalgebra of $L(E,E)$, hence it is
not a Banach subalgebra of $L(E,E)$. Indeed, consider a specific decay function:
\[\Gamma(j)=a|j|^{-\alpha}e^{-\theta|j|},\qquad j\in\mathbb{Z}^m,\]
with $\alpha>m,\,\theta>0$ and $a>0$ small enough.

Consider the sequence of linear maps $\{A^k\}_{k\in\mathbb{N}}$ defined by
\begin{equation*}
  A^k=\begin{cases}
    A^k_{i,j}=|i-j|\Gamma(i-j),\qquad |i-j|\leq k,\\
    A^k_{i,j}=0,\qquad \mathrm{otherwise.}
  \end{cases}
\end{equation*}

Clearly $A^k\in L_\Gamma(E,E)$. Next we check that $\{A^k\}_{k\in\mathbb{N}}$
converges to $A^\infty$ in $L(E,E)$, where
$A^\infty_{i,j}=|i-j|\Gamma(i-j),\,\forall i,j\in\mathbb{Z}^m$. Indeed
\begin{align}
\|A^\infty-A^k\|&=\sup_{\substack{u\in E\\ \|u\|\leq 1}}\|(A^\infty-A^k)u\|=\sup_{\|u\|\leq1}\sup_{i\in\mathbb{Z}^m}\|\sum_{|i-j|>k}|i-j|\Gamma(i-j)u_j\|\nonumber\\
&\leq \sum_{|l|>k}|l|\Gamma(l)\nonumber
\end{align}
which goes to zero as $k\to\infty$ because
$\sum_{l\in\mathbb{Z}^m}|l|\Gamma(l)$ is convergent provided either
$\theta>0$ or $\theta=0$ and $\alpha>m+1$. However $A^\infty\notin L_\Gamma(E,E)$ because
\[
\gamma(A^\infty)=\sup_{i,j\in\mathbb{Z}^m}|A_{i,j}|\Gamma(i-j)^{-1}=\sup_{i,j\in\mathbb{Z}^m}|i-j|=\infty.
\]

The space $L_\Gamma(E,E)$ is a Banach algebra with the identity as
unit, but $\|\Id\|_\Gamma=\Gamma(0)^{-1}\neq 1$. To be able to apply the general
results of Banach algebras with unit, we can introduce an equivalent norm in
$L_\Gamma(E,E)$, say $\|\cdot\|'$, such that $\|\Id\|'=1$. The procedure is
standard (see \cite{BEJohnsonBanach}). We define
\[\|A\|'=\sup\left\{\|AC\|_\Gamma,\, C\in L_\Gamma(E,E),\, \|C\|_\Gamma\leq 1\right\}.\] The
properties of norm are easily checked from the definition, proving the
equivalence requires the following. On one hand,

\[\|A\|'=\sup_{\|C\|_\Gamma\leq 1}\|AC\|_\Gamma\leq \sup_{\|C\|_\Gamma\leq
  1}\|A\|_\Gamma\|C\|_\Gamma=\|A\|_\Gamma,\] on the other hand,
\[\|A\|'\geq \|A\frac{\Id}{\|\Id\|_\Gamma}\|_\Gamma=\frac{1}{\|\Id\|_\Gamma}\|A\|_\Gamma.\]
Finally,
\[\|\Id\|'=\sup_{\|C\|_\Gamma\leq 1}\|\Id\cdot C\|_\Gamma=\sup_{\|C\|_\Gamma\leq
  1}\|C\|_\Gamma=1.\]

To illustrate some features of $L_\Gamma(E, E)$ we
present an example of an invertible linear map in
$L_\Gamma(\ell^\infty(\mathbb{C}^n), \ell^\infty(\mathbb{C}^n))$ such
that its inverse may not be in $L_\Gamma(\ell^\infty(\mathbb{C}^n),
\ell^\infty(\mathbb{C}^n))$ depending on the decay function $\Gamma$
considered.

Let $\ell^\infty(\mathbb{C})$ be a one dimensional lattice ($m=1$),
$r\in\mathbb{N}$, $a_0, \ldots, a_r\in \mathbb{C}$ and $A\in
L(\ell^\infty(\mathbb{C}^n), \ell^\infty(\mathbb{C}^n))$ determined by

\begin{align*}
A_{i,j}&=0, \qquad \text{if either }j<i\text{ or }j>i+r,\\
A_{i,i+k}&=a_k,\qquad 0\le k\le r,
\end{align*}
with $(Ax)_i=\sum_{j=i}^{i+r}A_{ij}x_j$.

Clearly $A\in L_\Gamma(\ell^\infty(\mathbb{C}),
\ell^\infty(\mathbb{C}))$ for any decay function $\Gamma$, since
\begin{align}
   \gamma(A)=\sup_{i,j}|A_{ij}|\Gamma(i-j)^{-1}=\max\{|a_0|\Gamma(0)^{-1}, |a_1|\Gamma(1)^{-1}, \ldots, |a_r|\Gamma(r)^{-1}\}<\infty\nonumber
\end{align}
and 
\begin{align}
  \|A\|=\sup_{\|x\|\leq 1}\|Ax\|=\sup_{\|x\|\leq
    1}\sup_{i\in\mathbb{Z}}\|(\ldots,
  \sum_{j=i}^{i+r}A_{ij}x_j,\ldots)\|=|a_0|+\ldots+|a_r|.\nonumber
\end{align}
We look for the inverse $B$ of $A$ assuming {\it a priori} that the inverse is upper
triangular and a band matrix. That is, $B_{ij}=b_{j-i}$ for some
$b_k\in\mathbb{C}$, with $b_k=0$ if $k<0$.

Imposing the condition $AB=\id$, or equivalently
\[\sum_{k\in\mathbb{Z}}A_{ik}B_{kj}=\delta_{ij}\]
we get 
\[a_0b_{j-i}+a_1b_{j-i-1}+\ldots+a_rb_{j-i-r}=\delta_{ij}.\] When
$i=j$ we have $a_0b_0=1$. This condition implies $a_0\neq 0$. We
assume it from now on. Then we proceed by induction and recursively
obtain $b_j$ for $j>0$. Actually $b_j$ satisfies the $r$-th order
linear difference equation
\[b_j=-\frac{a_1}{a_0}b_{j-1}-\frac{a_2}{a_0}b_{j-2}-\ldots-\frac{a_r}{a_0}b_{j-r},\qquad j\geq 1,\]
with initial conditions $b_0=1/a_0$, $b_{-1}=0,\ldots,b_{-r+1}=0$.

Using the theory of linear difference equations we can compute $b_j$
in terms of the zeros of the characteristic polynomial of this equation,
\[a_0x^r+a_1x^{r-1}+\ldots+a_r=0.\] Once we have determined $b_j$ and hence $B$, we
can check that indeed \[AB=BA=\id.\] For this to hold we strongly use
that $A_{ik}\in L(\mathbb{R},\mathbb{R})\sim \mathbb{R}$. It
remains to check that $B$ sends $\ell^\infty(\mathbb{R})$ to
itself. This will depend on the choice of the values of $a_i$.

To work with a specific example, assume $r=2$ and $a_0=1$. Hence we
can determine the zeros of the characteristic polynomial and write the
general solution of the difference equation as

\[b_j=\beta_1\left(\frac{-a_1+\sqrt{a_1^2-4a_2}}{2}\right)^j+\beta_2\left(\frac{-a_1-\sqrt{a_1^2-4a_2}}{2}\right)^j,\qquad
j\geq 0,\] for suitable values $\beta_1, \beta_2$.

Now we can choose $a_1, a_2$ to adjust the growth of the coefficients
$b_j$. For instance, taking $a_1=-\frac{3}{4}$, $a_2=\frac{1}{8}$,
then $b_j=2\left(\frac{1}{2}\right)^j-\left(\frac{1}{4}\right)^j$. In
this case $B\in L(\ell^\infty(\mathbb{R}), \ell^\infty(\mathbb{R}))$,
because $\sum_{j\geq0}|b_j|<\infty$. With the choice of decay function
$\Gamma(j)=a|j|^{-\alpha}e^{-\theta|j|}$ we have that
\begin{align}
  \gamma(B)&=\sup_{i,j}|B_{ij}|\Gamma(i-j)^{-1}\nonumber\\
  &=\max\left(\frac{1}{a},\sup_{j-i\geq 1}
    \left[2\left(\frac{1}{2}\right)^{j-i}-\left(\frac{1}{4}\right)^{j-i}\right]
  a^{-1}|j-i|^\alpha e^{\theta|j-i|}\right)\nonumber
\end{align}
which is finite when $\theta<\log 2$. Hence $B\in
L_\Gamma(\ell^\infty(\mathbb{R}), \ell^\infty(\mathbb{R}))$ (with this
particular choice of decay function $\Gamma$) if and only if
$\theta<\log 2$.

\subsection{$\Gamma$-spectrum of linear maps on lattices}
\label{sec:Gamma-spec}
Consider the lattice $\ell^\infty(\mathbb{R}^n)$ and a decay function
$\Gamma$. Also consider the complexified space
\[\ell^\infty(\mathbb{R}^n)\otimes_\mathbb{R}\mathbb{C}\sim\ell^\infty(\mathbb{R}^n)\oplus
i\ell^\infty(\mathbb{R}^n)\sim\ell^\infty(\mathbb{C}^n).\]

Let $\mathcal{E}$ be a linear subspace of
$\ell^\infty(\mathbb{C}^n)$. Given $A\in
L_\Gamma(\mathcal{E},\mathcal{E})$ we define:

\begin{itemize}
\item $\Gamma$-resolvent  of $A$ as
  \[\rhogamma(A)=\{\lambda\in\mathbb{C}\,\vert\,
  A-\lambda\id\text{ is invertible and }(A-\lambda\Id)^{-1}\in
  L_\Gamma(\mathcal{E},\mathcal{E})\},\]
\item $\Gamma$-spectrum of $A$ as
  \[\spec_\Gamma (A)=\mathbb{C}\backslash\rhogamma(A),\]
\item $\Gamma$-spectral radius of $A$ as
  \[\rgamma(A)=\sup\{|\lambda|\,\vert\, \lambda\in\spec_\Gamma(A)\}.\]
\end{itemize}

From the definitions above it is immediate that 
\[\rhogamma(A)\subset \rho(A)\]
and therefore 
\[\spec(A)\subset\spec_\Gamma(A),\qquad r(A)\leq \rgamma(A).\]

Also from the definitions, it is clear that $\spec_\Gamma(A)$ is the spectrum of
$A$ as an element of the Banach algebra $L_\Gamma(\mathcal{E}, \mathcal{E})$.
Hence all general properties of spectra in unitary Banach algebras apply to
$\spec_\Gamma$.

\subsection{Operational calculus}

The results in this section are similar to those for the spectrum of a
linear operator, a standard reference is \cite{SchechterFunctionalAnalysis}.
For the remaining of this section let $\mathcal{E}$ denote a complex Banach space.

Let $A\in L_\Gamma(\mathcal{E}, \mathcal{E})$ and $\Omega$ be an open set such that
$\spec_\Gamma(A)\subset\Omega$. Let $\omega$ be an open set such
that
\begin{equation}
  \spec_\Gamma(A)\subset \omega\subset \overline{\omega}\subset \Omega\label{Preliminaries:Operational:Eq:SpecInclusion}
\end{equation}
and $\partial \omega$ is a finite union of closed curves.

Then, given $f:\Omega\to\mathbb{C}$ analytic we define
\[f(A)=\frac{1}{2\pi i}\int_{\partial\omega}f(z)(z-A)^{-1}\dd z.\]

This definition is independent of the choice of $\omega$ provided it
satisfies the conditions above, and we have that

\[f(A)\in L_\Gamma(\mathcal{E},\mathcal{E}).\]

In the case that $f$ is a polynomial, $f(z)=\sum_{k=0}^ma_kz^k$, the
previous definition gives $f(A)=\sum_{k=0}^ma_kA^k$.

The following proposition proves upper semicontinuity of $\spec_\Gamma (A)$
with respect to $A$.

\begin{proposition}\label{Preliminaries:Operational:Prop:Continuity}
Let $A\in L_\Gamma(\mathcal{E}, \mathcal{E})$ and $\mu\in
\rhogamma(A)$. Then if $B\in L_\Gamma(\mathcal{E}, \mathcal{E})$
   and $\|B\|_\Gamma$ is small enough, then $\mu\in\rhogamma(A+B)$.
 \end{proposition}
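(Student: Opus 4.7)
The plan is to reduce this directly to Proposition \ref{LandM:Lemma:DInvertibleEasyCase}, which is the $L_\Gamma$-version of the standard Neumann series argument. Since $\mu\in\rhogamma(A)$, by definition the operator $M_0:=A-\mu\Id$ is invertible and its inverse lies in $L_\Gamma(\mathcal{E},\mathcal{E})$. I want to show that $A+B-\mu\Id$ enjoys the same property when $\|B\|_\Gamma$ is small.

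First I would write
\[
A+B-\mu\Id=(A-\mu\Id)+B=M_0+M_1,
\]
with $M_1:=B\in L_\Gamma(\mathcal{E},\mathcal{E})$. Choose $\|B\|_\Gamma$ small enough so that $\|M_0^{-1}\|_\Gamma\,\|B\|_\Gamma<1$; concretely it suffices to require $\|B\|_\Gamma<\|(A-\mu\Id)^{-1}\|_\Gamma^{-1}$. Then the hypotheses of Proposition~\ref{LandM:Lemma:DInvertibleEasyCase} are satisfied, so $M_0+M_1$ is invertible and its inverse belongs to $L_\Gamma(\mathcal{E},\mathcal{E})$. Explicitly, the inverse is obtained as the Neumann series
\[
(A+B-\mu\Id)^{-1}=\sum_{j=0}^{\infty}\bigl(-(A-\mu\Id)^{-1}B\bigr)^{j}(A-\mu\Id)^{-1},
\]
which converges in $\|\cdot\|_\Gamma$ by the Banach algebra property (Proposition~\ref{LandM:Proposition:LGAlgebraProperty}) and the completeness of $L_\Gamma$ (Proposition~\ref{LandM:Proposition:LGBanachSpace}).

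This exhibits $(A+B-\mu\Id)^{-1}$ as an element of $L_\Gamma(\mathcal{E},\mathcal{E})$, which by definition of the $\Gamma$-resolvent gives $\mu\in\rhogamma(A+B)$. There is no real obstacle here beyond verifying the smallness condition; the only subtlety worth mentioning is that one must work with the $\|\cdot\|_\Gamma$-norm (not the operator norm) throughout, so that the Neumann series converges in the Banach algebra $L_\Gamma$ rather than merely in $L(\mathcal{E},\mathcal{E})$. This is precisely what Proposition~\ref{LandM:Lemma:DInvertibleEasyCase} was designed to handle.
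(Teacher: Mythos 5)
Your proof is correct, and it uses exactly the tool the paper intends: the statement is left unproved in the text, but Proposition~\ref{LandM:Lemma:DInvertibleEasyCase} (the $\|\cdot\|_\Gamma$-Neumann series) applied to $M_0=A-\mu\Id$, $M_1=B$ with $\|B\|_\Gamma<\|(A-\mu\Id)^{-1}\|_\Gamma^{-1}$ is precisely the intended argument. Nothing is missing.
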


Moreover, if $f, g:\Omega\to \mathbb{C}$ are analytic functions and
$h(z)=f(z)g(z)$, hence
\begin{equation}
   h(A)=f(A)g(A).\label{Preliminaries:Operational:EqProd}
\end{equation}
As a consequence, under these conditions if we assume $f:\Omega\to
\mathbb{C}$ is not zero, thus $f(A)$ is invertible, $f(A)^{-1}\in
L_\Gamma(\mathcal{E}, \mathcal{E})$ and
\begin{equation}
  \left[f(A)\right]^{-1}=\frac{1}{2\pi i}\int_{\partial\omega}\frac{1}{f(z)}(z-A)^{-1}\dd z\label{Preliminaries:Operational:EqInv}
\end{equation}
where $\omega$ satisfies \eqref{Preliminaries:Operational:Eq:SpecInclusion}.

With these results on spectra of $\Gamma$-linear maps we can now prove
the inverse function theorem in lattices with spatial decay.

\begin{proof}[Proof of Theorem \ref{DiffFunDecay:InverseFunctionTheorem}]

  From the standard inverse function theorem in Banach spaces, $F$ is
  locally invertible and $F^{-1}$ is defined in a neighbourhood $V$ of
  $q$. Moreover, $DF^{-1}(q)=DF(p)^{-1}$ and by the continuity of $DF$
  and continuity of $\spec_\Gamma$, $DF^{-1}(x)\in L_\Gamma$ for $x\in
  V$, provided $V$ is small. Since
  \begin{equation}
    DF^{-1}(x)=\left(DF(F^{-1}(x))\right)^{-1}\label{DiffFunDecay:InverseFunctionTheorem:Eq1}
  \end{equation}
  we can obtain the higher order derivatives of $F^{-1}$ by taking
  derivatives in the right hand side of
  \eqref{DiffFunDecay:InverseFunctionTheorem:Eq1}. For instance,
  \begin{align}
    D^2F^{-1}(x)&=-\left(DF(F^{-1}(x))\right)^{-1}D^2F(F^{-1}(x))(DF(F^{-1}(x)))^{-1}\nonumber\\
    &=-DF^{-1}(x)D^2F(F^{-1}(x))DF^{-1}(x).\label{DiffFunDecay:InverseFunctionTheorem:Eq2}
  \end{align}

  Then, by Proposition
  \ref{LandM:Proposition:LGAlgebraPropertyMultilinear}, we have
  $D^2F^{-1}(x)\in L^2_\Gamma$. Proceeding in the same way for the
  other derivatives we get that $F^{-1}\in C^r_\Gamma(V,
  \ell^\infty(\mathbb{R}^n))$. Alternatively, we can use
  \eqref{DiffFunDecay:InverseFunctionTheorem:Eq2} to prove inductively
  that $F^{-1}\in C^i_\Gamma$ assuming $F^{-1}\in C^{i-1}_\Gamma$, for
  $i\leq r$.
  
\end{proof}

\subsection{Spectral projections associated to a gap in the $\Gamma$-spectrum}

We can adapt the spectral projection theorem (see for instance
\cite{SchechterFunctionalAnalysis}) to the setting of $\Gamma$-spectrum. The
statements and proofs are very similar to the ones corresponding to
$L(\mathcal{E},\mathcal{E})$ but we give them here for the sake of completeness.

Assume that \[\spec_\Gamma(A)=\sigma_1\cup\sigma_2,\]
with \[\sigma_i\subset
\omega_i\subset\overline{\omega_i}\subset\Omega_i,\qquad i=1,2,\] where
$\Omega_i$ are disjoint open sets and $\omega_i$ are open sets such
that $\partial\omega_i$ are finite union of simple closed curves.

We define
\[P=\frac{1}{2\pi i}\int_{\partial\omega_1}(z-A)^{-1}\dd z.\] Results on
integration of elements of $L_\Gamma$ can be found in
\cite{FontichDelaLlaveMartinFAFramework}.

\begin{lemma}\label{GapProjections:Lemma:Projector}
We have
\begin{enumerate}[label=(\roman*)]
\item $P\in L_\Gamma(\mathcal{E}, \mathcal{E})$,\label{Preliminaries:Operational:Lemma:Projection:IsGamma}
\item $P^2=P$\label{Preliminaries:Operational:Lemma:Projection:IsProjection},
\item $P(\mathcal{E})$ and $\krn(P)$ are closed and invariant.\label{Preliminaries:Operational:Lemma:Projection:Invariants}
\end{enumerate}
\end{lemma}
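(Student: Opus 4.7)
The plan is to follow the classical Riesz functional calculus proof of the Dunford spectral projection theorem, but keep track of the $\|\cdot\|_\Gamma$ norm at every step so that invariance in the Banach subalgebra $L_\Gamma(\mathcal{E},\mathcal{E})$ is not lost.

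For part \ref{Preliminaries:Operational:Lemma:Projection:IsGamma}, I would first check that $z\mapsto (z-A)^{-1}$ is a continuous map from a neighbourhood of $\partial\omega_1$ into $(L_\Gamma,\|\cdot\|_\Gamma)$. This follows from the resolvent identity
\[
(z-A)^{-1}-(z_0-A)^{-1}=(z_0-z)(z-A)^{-1}(z_0-A)^{-1},
\]
together with the algebra property \ref{LandM:Proposition:LGAlgebraProperty} and the assumption that $\partial\omega_1\subset\rhogamma(A)$. Continuity plus compactness of $\partial\omega_1$ gives a uniform bound on $\|(z-A)^{-1}\|_\Gamma$, and since $L_\Gamma$ is Banach (Proposition \ref{LandM:Proposition:LGBanachSpace}) the Riemann-type integral defining $P$ converges in $\|\cdot\|_\Gamma$. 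Hence $P\in L_\Gamma$.

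For part \ref{Preliminaries:Operational:Lemma:Projection:IsProjection}, I would use the standard two-contour trick. Pick a second open set $\omega_1'$ with $\sigma_1\subset\omega_1\subset\overline{\omega_1}\subset\omega_1'\subset\overline{\omega_1'}\subset\Omega_1$ and write
\[
P^2=\frac{1}{(2\pi i)^2}\int_{\partial\omega_1}\!\!\int_{\partial\omega_1'}(z-A)^{-1}(w-A)^{-1}\dd w\,\dd z.
\]
Apply the resolvent identity $(z-A)^{-1}(w-A)^{-1}=(w-z)^{-1}[(z-A)^{-1}-(w-A)^{-1}]$ and split the double integral. The integral in $w$ over $\partial\omega_1'$ of $(w-z)^{-1}$ equals $2\pi i$ for $z$ on the inner contour $\partial\omega_1$, while the integral in $z$ over $\partial\omega_1$ of $(w-z)^{-1}$ equals $0$ for $w$ on the outer contour $\partial\omega_1'$ (Cauchy's theorem, since $w$ is outside $\omega_1$). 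Fubini in this Banach-space-valued setting is justified by the uniform $\|\cdot\|_\Gamma$ bound from the previous paragraph. What remains after the cancellation is exactly $P$.

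For part \ref{Preliminaries:Operational:Lemma:Projection:Invariants}, note that $A$ commutes with each resolvent $(z-A)^{-1}$, so by pulling $A$ through the integral I get $AP=PA$. Combined with $P^2=P$, this shows that both $P(\mathcal{E})=\krn(\Id-P)$ and $\krn(P)$ are $A$-invariant, and they are closed because $P$ and $\Id-P$ are bounded operators on $\mathcal{E}$ (the $L_\Gamma$ norm dominates the operator norm).

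The main technical obstacle is really only in part \ref{Preliminaries:Operational:Lemma:Projection:IsGamma}: one must confirm that the vector-valued integral makes sense as a limit of Riemann sums in the $\|\cdot\|_\Gamma$ topology, rather than only in the operator-norm topology. Once the continuity of the resolvent in $\|\cdot\|_\Gamma$ is secured via the algebra estimate of Proposition \ref{LandM:Proposition:LGAlgebraProperty}, the rest of the argument is a direct transcription of the Banach-algebra proof, and the invocation of Fubini in part \ref{Preliminaries:Operational:Lemma:Projection:IsProjection} is legitimate because both integrands are continuous on compact contours with values in the Banach space $L_\Gamma$.
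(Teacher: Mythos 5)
Your proof is correct and follows essentially the same route as the paper: the Riesz functional-calculus construction of the spectral projection, carried out in the Banach algebra $(L_\Gamma,\|\cdot\|_\Gamma)$ so that the contour integral converges there. The only cosmetic difference is that for $P^2=P$ the paper simply invokes the multiplicativity $f(A)g(A)=(fg)(A)$ of the operational calculus with the idempotent $f=1$ on $\Omega_1$, $f=0$ on $\Omega_2$, whereas you unfold that identity into the explicit two-contour resolvent computation; likewise you spell out the $\|\cdot\|_\Gamma$-continuity of the resolvent and the commutation $AP=PA$, which the paper delegates to a cited reference and to the standard projection argument.
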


\begin{proof}
  Part \ref{Preliminaries:Operational:Lemma:Projection:IsGamma}
  follows from the properties of integrals of functions in the Banach
  algebra $L_\Gamma$ in \cite{FontichDelaLlaveMartinFAFramework}. Part
  \ref{Preliminaries:Operational:Lemma:Projection:IsProjection}
  follows from the fact that $P$ can be written as
  \[\frac{1}{2\pi i }\int_{\partial\omega}f(z)(z-A)^{-1}\dd z,\]
  with $f:\Omega_1\to \mathbb{C}$, defined by $f(z)=1$. Since
  $f(z)=f(z)f(z)$, by \eqref{Preliminaries:Operational:EqProd}
  \[PP=f(A)f(A)=f(A)=P,\]
  proving $P$ is a projection.
  
  For Part
  \ref{Preliminaries:Operational:Lemma:Projection:Invariants},
  $P(\mathcal{E})$ and $\krn(P)$ are invariant when $P$ is a projection,
  and $\mathcal{E}=P(\mathcal{E})\oplus \krn(P)$.  Moreover since
  $P(\mathcal{E})=\krn (\id -P)$ and $P$ is continuous, both
  $P(\mathcal{E})$ and $\krn(\id -P)$ are closed.
  
\end{proof}

We denote $\mathcal{E}^1=P(\mathcal{E})$ and
$\mathcal{E}^2=(\id-P)(\mathcal{E})=\krn (P)$ and
$A_i=\left.A\right|_{\mathcal{E}_i}$.

\begin{theorem}\label{GapProjections:Theorem:Spectrum}
We have that
\[\spec_\Gamma(A_i)=\sigma_i,\qquad i=1,2.\]
\end{theorem}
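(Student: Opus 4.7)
The plan is to prove both inclusions $\spec_\Gamma(A_i)\subseteq\sigma_i$ and $\sigma_i\subseteq\spec_\Gamma(A_i)$ separately, treating $i=1$ (the case $i=2$ being symmetric). Since $A$ commutes with the spectral projection $P$ (both being in the commutative subalgebra generated by $A$ and its resolvent), the subspaces $\mathcal{E}^1,\mathcal{E}^2$ are $A$-invariant and the operational calculus developed in this section applies to the block decomposition $A = A_1\oplus A_2$.

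For the inclusion $\spec_\Gamma(A_1)\subseteq\sigma_1$, fix $\lambda\notin\sigma_1$. By shrinking $\omega_1$ slightly if necessary (keeping $\sigma_1\subset\omega_1\subset\overline{\omega_1}\subset\Omega_1$), we may assume $\lambda\notin\overline{\omega_1}$. Define $g:\Omega_1\cup\Omega_2\to\mathbb{C}$ by $g(z)=(\lambda-z)^{-1}$ on $\Omega_1$ and $g(z)=0$ on $\Omega_2$; this is analytic on a neighborhood of $\spec_\Gamma(A)$. By the operational calculus, $g(A)\in L_\Gamma(\mathcal{E},\mathcal{E})$. Setting $e_1(z)=1$ on $\Omega_1$ and $e_1(z)=0$ on $\Omega_2$, we have $e_1(A)=P$ by construction, and $(\lambda-z)g(z)=e_1(z)$ on $\Omega_1\cup\Omega_2$, so by the product property \eqref{Preliminaries:Operational:EqProd}
\[
(\lambda \id-A)\,g(A) = g(A)\,(\lambda \id-A) = P.
\]
Restricting both sides to $\mathcal{E}^1$ and using $P|_{\mathcal{E}^1}=\id_{\mathcal{E}^1}$ and the fact that $g(A)$ commutes with $P$ (so $g(A)$ preserves $\mathcal{E}^1$), we conclude that $\lambda \id-A_1$ is invertible on $\mathcal{E}^1$ with inverse $g(A)|_{\mathcal{E}^1}\in L_\Gamma(\mathcal{E}^1,\mathcal{E}^1)$; hence $\lambda\in\rhogamma(A_1)$.

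For the reverse inclusion $\sigma_1\subseteq\spec_\Gamma(A_1)$, suppose towards contradiction that $\lambda\in\sigma_1$ but $\lambda\in\rhogamma(A_1)$. The previous step applied to $A_2$ gives $\spec_\Gamma(A_2)\subseteq\sigma_2$, and since $\sigma_1\cap\sigma_2=\emptyset$, we get $\lambda\in\rhogamma(A_2)$ as well. Using $P,\id-P\in L_\Gamma(\mathcal{E},\mathcal{E})$ (Lemma \ref{GapProjections:Lemma:Projector}) and the algebra property of Proposition \ref{LandM:Proposition:LGAlgebraProperty}, the operator
\[
S = (A_1-\lambda \id)^{-1}\,P + (A_2-\lambda \id)^{-1}\,(\id-P)
\]
belongs to $L_\Gamma(\mathcal{E},\mathcal{E})$ and is a two-sided inverse of $A-\lambda \id$, whence $\lambda\in\rhogamma(A)=\mathbb{C}\setminus(\sigma_1\cup\sigma_2)$, contradicting $\lambda\in\sigma_1$.

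The main delicate point is verifying that the restriction $g(A)|_{\mathcal{E}^1}$ genuinely lives in $L_\Gamma(\mathcal{E}^1,\mathcal{E}^1)$ and that the splicing formula for $S$ is $\Gamma$-bounded; both rely on the fact that $P$ itself is $\Gamma$-linear and commutes with $A$ and with the resolvent, so products like $(A_i-\lambda)^{-1}P$ can be interpreted as elements of $L_\Gamma(\mathcal{E},\mathcal{E})$ whose restriction to the invariant subspace inherits the $\Gamma$-decay. Once this is in hand, the rest reduces to the standard spectral projection argument transposed from $L(\mathcal{E},\mathcal{E})$ to the Banach algebra $L_\Gamma(\mathcal{E},\mathcal{E})$.
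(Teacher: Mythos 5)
Your proposal is correct and follows essentially the same route as the paper: the same cut-off function ($e_1$ is the paper's $f$, your $g$ is the paper's $g_1$), the same product-property identity $(\lambda\id-A)g(A)=P$ restricted to $\mathcal{E}^1$, and the same splicing formula $(\lambda-A_1)^{-1}P+(\lambda-A_2)^{-1}(\id-P)$ for the reverse inclusion (the paper obtains it directly from $g_1+g_2=(\lambda-z)^{-1}$ and the operational calculus rather than by verifying a candidate inverse, but the content is identical). No gaps.
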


\begin{proof}
Let
\[f(z)=\begin{cases}1,\quad \text{ if }z\in\Omega_1,\\0, \quad \text{ if
    }z\in\Omega_2.\end{cases}\] Moreover, let $\lambda\notin\sigma_1$ and
$g_1(z)=\frac{f(z)}{\lambda-z}$. The function $g_1$ is analytic in a
neighbourhood $U$ of $\spec_\Gamma (A)$ and satisfies
\begin{align}
f(z)&=(\lambda-z)g_1(z),\nonumber\\
f(z)g_1(z)&=g_1(z)\nonumber,
\end{align}
for $z\in U$.

By \eqref{Preliminaries:Operational:EqProd}, 
\begin{align}
f(A)&=(\lambda-A)g_1(A),\label{Preliminaries:Operational:Theorem:Spec:Eq}\\
f(A)g_1(A)&=g_1(A),\nonumber
\end{align}
and hence
\begin{align}
  P&=(\lambda-A)g_1(A)=g_1(A)(\lambda-A),\\ Pg_1(A)&=g_1(A)P=g_1(A).
\end{align}
If $x\in \mathcal{E}^1$, \[g_1(A)x=Pg_1(A)x=P(g_1(A)x)\in \mathcal{E}^1.\] Moreover, from \eqref{Preliminaries:Operational:Theorem:Spec:Eq}
\[g_1(A)=\left.(\lambda-A)^{-1}\right|_{\mathcal{E}^1}=(\lambda-A_1)^{-1}\]
which implies that $\lambda\in \rhogamma(A_1)$. Therefore
$\spec_\Gamma(A_1)\subset \sigma_1$. Since $\id-P$ is a projection onto
$\mathcal{E}^2$ a completely analogous argument shows that
$\spec_\Gamma(A_2)\subset\sigma_2.$ Indeed, given $\lambda \notin\sigma_2$, let
$g_2(z)=\frac{1-f(z)}{\lambda-z}.$ Then
\begin{align*}
  (\id-P)&=(\lambda-A)g_2(A)=g_2(A)(\lambda-A),\\ (\id-P)g_2(A)&=g_2(A)(\id-P)=g_2(A)
\end{align*}
and
\[g_2(A)=\left.(\lambda-A)^{-1}\right|_{\mathcal{E}^2}=(\lambda-A_2)^{-1}.\]
Now suppose that $\lambda\in\rhogamma(A_1)\cap\rhogamma(A_2).$ Since $g_1(z)+g_2(z)=\frac{1}{\lambda-z}$, by \eqref{Preliminaries:Operational:EqInv} we have
\begin{align}
(\lambda-A)^{-1}&=g_1(A)+g_2(A)=g_1(A)P+g_2(A)(\id-P)\nonumber\\
&=(\lambda-A_1)^{-1}P+(\lambda-A_2)^{-1}(\id-P).\nonumber
\end{align}
Then $\lambda\in\rhogamma(A)$, which implies that
\[\spec_\Gamma(A)\subset \spec_\Gamma(A_1)\cup\spec_{\Gamma}(A_2),\]
and therefore
\[\sigma_i\subset\spec_\Gamma(A_i),\qquad i=1,2.\]
\end{proof}

\section{Sylvester operators in $L^k_\Gamma$}
\label{Preliminaries:SylvesterInLk}
In this section we will introduce Sylvester operators and prove some results  in
spaces of $k$ linear maps with decay. 

\begin{definition}
  Let $E=\ell^\infty(\mathbb{R}^n)$. Given $A, B\in L_\Gamma(E, E)$ we
  define the operators \[\mathcal{R}_{j,A}:L^k_\Gamma(E, E)\to
  L^k_\Gamma(E, E),\,\qquad 1\leq j\leq k,\] by
\[\mathcal{R}_{j,A}(W)(u_1, \ldots, u_k)=W(u_1,\ldots,Au_j,\ldots, u_k),\]
and $\mathcal{L}_B, \mathcal{S}_{B,A}:L^k_\Gamma(E, E)\to
L^k_\Gamma(E, E)$ by
\begin{align}
   \mathcal{L}_{B}(W)(u_1, \ldots, u_k)&=BW(u_1, \ldots, u_k),\nonumber\\
   \mathcal{S}_{B, A}(W)(u_1, \ldots, u_k)&=BW(Au_1, \ldots, Au_k),\nonumber
\end{align}
respectively.
\end{definition}

Note that by Proposition
\ref{LandM:Proposition:LGAlgebraPropertyMultilinear}, if $W\in
L_\Gamma^k(E, E)$ then $\mathcal{R}_{j, A}(W), \mathcal{L}_B(W)$ and
$\mathcal{S}_{B, A}(W)$ are in $L^k_\Gamma(E, E)$ so that the
operators are well defined.

Given two subsets $X$, $Y$ of $\mathbb{C}$ we denote by $X\cdot Y$ the
set
\[X\cdot Y=\{x\cdot y\,\vert\, x\in X,\, y\in Y\}.\] Analogously, we
define
\[X^k=X\cdot\stackrel{k}{\ldots}\cdot X.\]

\begin{proposition}\label{Preliminaries:SylvesterGamma:Proposition}
  We have
   \[\spec(\mathcal{S}_{B, A}, L_\Gamma^k(E, E))\subset \spec_\Gamma(B)\cdot\left(\spec_\Gamma(A)\right)^k, \qquad k\in\mathbb{N}.\]
\end{proposition}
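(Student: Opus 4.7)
The plan is to reduce the proposition to a general spectral inclusion for products of pairwise commuting elements in a Banach algebra, applied to a canonical factorization of $\mathcal{S}_{B,A}$. First, I would observe the factorization
\[\mathcal{S}_{B,A} = \mathcal{L}_B \circ \mathcal{R}_{1,A} \circ \cdots \circ \mathcal{R}_{k,A},\]
which is immediate from unpacking definitions, and verify that the $k+1$ factors pairwise commute as operators on $L^k_\Gamma(E,E)$. Commutativity among the $\mathcal{R}_{j,A}$ holds because they act on disjoint slots, while $\mathcal{L}_B$ commutes with each $\mathcal{R}_{j,A}$ because left multiplication by $B$ takes place after evaluation on the arguments.

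Next, I would identify the spectra of the factors as operators on $L^k_\Gamma$. For any $\beta \notin \spec_\Gamma(B)$, the element $(B - \beta \id)^{-1}$ lies in $L_\Gamma(E,E)$ by definition, and the corollary following Proposition~\ref{LandM:Proposition:LGAlgebraPropertyMultilinear} ensures that $\mathcal{L}_{(B-\beta\id)^{-1}}$ is a bounded operator on $L^k_\Gamma$ that serves as the two-sided inverse of $\mathcal{L}_B - \beta \Id = \mathcal{L}_{B-\beta\id}$. Hence $\spec(\mathcal{L}_B, L^k_\Gamma) \subset \spec_\Gamma(B)$. The same argument applied in the $j$-th slot, using Proposition~\ref{LandM:Proposition:LGAlgebraPropertyMultilinear} directly, shows $(\mathcal{R}_{j,A} - \alpha \Id)^{-1} = \mathcal{R}_{j, (A-\alpha\id)^{-1}}$ whenever $\alpha \notin \spec_\Gamma(A)$, so $\spec(\mathcal{R}_{j,A}, L^k_\Gamma) \subset \spec_\Gamma(A)$ for each $j$.

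The core of the argument is then the following general fact: if $T_0, T_1, \ldots, T_k$ are pairwise commuting elements of a unital Banach algebra $\mathcal{B}$, then
\[\spec(T_0 T_1 \cdots T_k) \subset \spec(T_0) \cdot \spec(T_1) \cdots \spec(T_k).\]
I would prove this by passing to a maximal closed commutative subalgebra $\mathcal{A} \subset \mathcal{B}$ containing all the $T_j$ and $\id$, obtained via Zorn's lemma. Maximality forces $\spec_\mathcal{A}(a) = \spec_\mathcal{B}(a)$ for every $a \in \mathcal{A}$: if $a - \lambda$ has a $\mathcal{B}$-inverse $b$, then $b$ commutes with every element commuting with $a$, hence with all of $\mathcal{A}$, so the closed algebra generated by $\mathcal{A} \cup \{b\}$ is still commutative, and maximality gives $b \in \mathcal{A}$. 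Each character $\chi$ of $\mathcal{A}$ then satisfies $\chi(T_j) \in \spec_\mathcal{A}(T_j) = \spec_\mathcal{B}(T_j)$ and $\chi(T_0 T_1 \cdots T_k - \lambda \id) = \chi(T_0) \cdots \chi(T_k) - \lambda$. If $\lambda \notin \spec_\mathcal{B}(T_0) \cdots \spec_\mathcal{B}(T_k)$, this is nonzero for every character, so $T_0 \cdots T_k - \lambda \id$ is invertible in $\mathcal{A}$, and hence in $\mathcal{B}$. Applying this with $\mathcal{B} = L(L^k_\Gamma)$ and combining with the previous step yields the proposition.

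The main obstacle is the general spectral inclusion for products of commuting elements. A more hands-on alternative, closer in spirit to the operational calculus developed in Section~\ref{Preliminaries:Functional}, is to construct the resolvent $(\lambda \Id - \mathcal{S}_{B,A})^{-1}$ explicitly as the $(k+1)$-fold contour integral
\[\frac{1}{(2\pi i)^{k+1}} \oint_{\partial \omega_B} \oint_{\partial \omega_A} \cdots \oint_{\partial \omega_A} \frac{\mathcal{L}_{(\beta - B)^{-1}} \, \mathcal{R}_{1, (\alpha_1 - A)^{-1}} \cdots \mathcal{R}_{k, (\alpha_k - A)^{-1}}}{\lambda - \beta \alpha_1 \cdots \alpha_k} \, d\beta \, d\alpha_1 \cdots d\alpha_k,\]
with contours $\partial \omega_B$ around $\spec_\Gamma(B)$ and $\partial \omega_A$ around $\spec_\Gamma(A)$ chosen so that $\beta \alpha_1 \cdots \alpha_k$ stays uniformly away from $\lambda$ (possible because both spectra are compact and $\lambda$ avoids their product), and then to verify $(\lambda \Id - \mathcal{S}_{B,A}) \circ T_\lambda = \Id$ by expanding $\mathcal{S}_{B,A}$ through the factorization, exploiting commutativity of the resolvent factors, and invoking the multiplication and inverse formulas \eqref{Preliminaries:Operational:EqProd}--\eqref{Preliminaries:Operational:EqInv}.
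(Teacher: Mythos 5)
Your proposal is correct and follows essentially the same route as the paper: the same factorization $\mathcal{S}_{B,A}=\mathcal{L}_B\circ\mathcal{R}_{1,A}\circ\cdots\circ\mathcal{R}_{k,A}$ into commuting operators, the same identification of the factor spectra via $(\mathcal{R}_{j,A}-\lambda\Id)^{-1}=\mathcal{R}_{j,(A-\lambda\id)^{-1}}$, and the spectral inclusion for products of commuting elements. The only difference is that where the paper cites Rudin's Theorem 11.23 for that last inclusion, you supply its standard proof (maximal commutative subalgebras and Gelfand characters), which is a matter of self-containedness rather than of method.
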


The proof of this proposition is a consequence of the following theorem and the next lemma.

\begin{theorem}\label{ThmRudin}[Theorem 11.23, \cite{RudinFA}]
  Let $\mathfrak{a}$ and $\mathfrak{b}$ be two commuting elements in a unitary Banach
  algebra. Then
  \[\spec(\mathfrak{a}\mathfrak{b})\subseteq\spec(\mathfrak{a})\cdot\spec(\mathfrak{b}).\]
\end{theorem}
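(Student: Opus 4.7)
The plan is to reduce to a commutative setting and apply Gelfand theory. Let $\mathfrak{A}$ denote the ambient unitary Banach algebra. Since $\mathfrak{a}$ and $\mathfrak{b}$ commute, by Zorn's lemma there exists a maximal commutative subalgebra $\mathcal{B} \subset \mathfrak{A}$ containing the unit $\mathbf{1}$, $\mathfrak{a}$, and $\mathfrak{b}$. The closure of a commutative subalgebra remains commutative, so $\mathcal{B}$ is automatically closed and hence a unitary Banach algebra in its own right.

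The crucial preliminary step is to show that for every $x \in \mathcal{B}$,
\[
\spec_{\mathcal{B}}(x) = \spec_{\mathfrak{A}}(x).
\]
The inclusion $\spec_{\mathfrak{A}}(x) \subseteq \spec_{\mathcal{B}}(x)$ is immediate since invertibility in $\mathcal{B}$ implies invertibility in $\mathfrak{A}$. For the reverse, suppose $\lambda \mathbf{1} - x$ is invertible in $\mathfrak{A}$ with inverse $y$. For any $c \in \mathcal{B}$ we have $c(\lambda\mathbf{1}-x) = (\lambda\mathbf{1}-x)c$; multiplying this identity on the left and then on the right by $y$ yields $yc = cy$. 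Thus $y$ commutes with every element of $\mathcal{B}$, and maximality forces $y \in \mathcal{B}$, so $\lambda \in \rho_{\mathcal{B}}(x)$.

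Next I would invoke Gelfand theory: since $\mathcal{B}$ is a commutative unitary Banach algebra, its character space $\Delta(\mathcal{B})$ (non-zero multiplicative linear functionals $\chi : \mathcal{B} \to \mathbb{C}$) is nonempty, and for each $x \in \mathcal{B}$,
\[
\spec_{\mathcal{B}}(x) = \{\chi(x) : \chi \in \Delta(\mathcal{B})\}.
\]
Given $\lambda \in \spec(\mathfrak{a}\mathfrak{b}) = \spec_{\mathcal{B}}(\mathfrak{a}\mathfrak{b})$, pick $\chi \in \Delta(\mathcal{B})$ with $\chi(\mathfrak{a}\mathfrak{b}) = \lambda$. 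Multiplicativity of $\chi$ gives $\lambda = \chi(\mathfrak{a})\chi(\mathfrak{b})$, where $\chi(\mathfrak{a}) \in \spec_{\mathcal{B}}(\mathfrak{a}) = \spec(\mathfrak{a})$ and similarly $\chi(\mathfrak{b}) \in \spec(\mathfrak{b})$, so $\lambda \in \spec(\mathfrak{a}) \cdot \spec(\mathfrak{b})$.

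The main obstacle is the coincidence $\spec_{\mathcal{B}} = \spec_{\mathfrak{A}}$ on $\mathcal{B}$. This step is where both hypotheses (commutativity of $\mathfrak{a}, \mathfrak{b}$ and unitality of $\mathfrak{A}$) are essential: commutativity provides a maximal commutative container, and maximality is precisely what forces inverses computed in the large algebra to return to the small one. Once this is secured, the rest is a straightforward application of the Gelfand representation, since characters convert the \emph{multiplicative} relation $\mathfrak{a}\mathfrak{b}$ into pointwise multiplication on $\Delta(\mathcal{B})$.
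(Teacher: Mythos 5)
Your proof is correct, and there is in fact nothing in the paper to compare it against: the paper states this result as a quotation of Theorem 11.23 of \cite{RudinFA} and gives no proof, so the natural benchmark is Rudin's own argument, which yours essentially reproduces. The one genuine difference is how the commutative subalgebra with spectral permanence is produced: you invoke Zorn's lemma to get a maximal commutative subalgebra $\mathcal{B}\ni\mathbf{1},\mathfrak{a},\mathfrak{b}$ and then argue (correctly) that maximality forces $\mathcal{B}$ to be closed and to absorb inverses computed in $\mathfrak{A}$, via the identity $yc=cy$ for the resolvent $y=(\lambda\mathbf{1}-x)^{-1}$; Rudin instead takes the double centralizer $\mathcal{B}=\Gamma(\Gamma(\{\mathfrak{a},\mathfrak{b}\}))$, which is commutative, closed, and inverse-closed by direct computation, with no appeal to Zorn. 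Both routes deliver the key equality $\spec_{\mathcal{B}}(x)=\spec_{\mathfrak{A}}(x)$ for $x\in\mathcal{B}$, after which the Gelfand step (characters exist, exhaust $\spec_{\mathcal{B}}$, and are multiplicative, so $\chi(\mathfrak{a}\mathfrak{b})=\chi(\mathfrak{a})\chi(\mathfrak{b})$) is identical in the two proofs. Two minor points you should make explicit: the Gelfand machinery requires the algebra to be complex, which is the standing assumption here since the $\Gamma$-spectrum is defined on the complexified lattice $\ell^\infty(\mathbb{C}^n)$; and in the maximality step the object to which maximality is applied is the subalgebra generated by $\mathcal{B}\cup\{y\}$, which is commutative precisely because $y$ commutes with every element of $\mathcal{B}$.
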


\begin{lemma}
  Given $A, B\in L_\Gamma(E, E), k\in\mathbb{N}, 1\leq j\leq k$, then
  \begin{align}
    \spec(\mathcal{R}_{j,A}, L_\Gamma^k(E, E))&\subset\spec_\Gamma(A),\nonumber\\
    \spec(\mathcal{L}_B, L^k_\Gamma(E, E))&\subset\spec_\Gamma(B).\nonumber
  \end{align}
\end{lemma}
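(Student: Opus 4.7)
The plan is to prove the contrapositive inclusions
$\rhogamma(A)\subseteq\rho(\mathcal{R}_{j,A}, L^k_\Gamma(E,E))$ and
$\rhogamma(B)\subseteq\rho(\mathcal{L}_B, L^k_\Gamma(E,E))$. To this end I would first verify two elementary algebraic identities, obtained by unwinding the definitions:
\begin{align*}
\mathcal{R}_{j,A} - \lambda\,\id_{L^k_\Gamma} &= \mathcal{R}_{j,\,A-\lambda\id_E}, &
\mathcal{L}_B - \lambda\,\id_{L^k_\Gamma} &= \mathcal{L}_{B-\lambda\id_E}, \\
\mathcal{R}_{j,A_1}\circ\mathcal{R}_{j,A_2} &= \mathcal{R}_{j,\,A_2 A_1}, &
\mathcal{L}_{B_1}\circ\mathcal{L}_{B_2} &= \mathcal{L}_{B_1 B_2}.
\end{align*}
The reversed order in the composition for $\mathcal{R}_j$ comes from the fact that $A_1$ acts on the $j$-th slot \emph{after} $A_2$ has been plugged in; the identity for $\mathcal{L}$ is direct.

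Next, for $\lambda\in\rhogamma(A)$ we have $(A-\lambda\id_E)^{-1}\in L_\Gamma(E,E)$ by definition of the $\Gamma$-resolvent, so $\mathcal{R}_{j,(A-\lambda\id_E)^{-1}}$ is a \emph{well-defined bounded operator} on $L^k_\Gamma(E,E)$. Indeed, for $W\in L^k_\Gamma(E,E)$, the map $\mathcal{R}_{j,(A-\lambda\id_E)^{-1}}(W)$ is exactly the composition $W\cdot(\id_E,\dots,(A-\lambda\id_E)^{-1},\dots,\id_E)$ with the nontrivial factor in the $j$-th slot. By Proposition~\ref{LandM:Proposition:LGAlgebraPropertyMultilinear} this composition belongs to $L^k_\Gamma$ and its $\|\cdot\|_\Gamma$-norm is controlled by $\|W\|_\Gamma\|(A-\lambda\id_E)^{-1}\|_\Gamma\|\id_E\|_\Gamma^{k-1}$. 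Combining the algebraic identities,
\[
\mathcal{R}_{j,A-\lambda\id_E}\circ\mathcal{R}_{j,(A-\lambda\id_E)^{-1}}
= \mathcal{R}_{j,\,\id_E} = \id_{L^k_\Gamma},
\]
and analogously for the opposite composition, so $\mathcal{R}_{j,A}-\lambda\,\id$ is invertible on $L^k_\Gamma$ with inverse $\mathcal{R}_{j,(A-\lambda\id_E)^{-1}}$, hence $\lambda\in\rho(\mathcal{R}_{j,A},L^k_\Gamma(E,E))$.

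The argument for $\mathcal{L}_B$ is formally identical. The only modification is that now, for $\lambda\in\rhogamma(B)$, we invoke the Corollary following Proposition~\ref{LandM:Proposition:LGAlgebraPropertyMultilinear} to see that left multiplication by $(B-\lambda\id_E)^{-1}\in L_\Gamma$ sends $L^k_\Gamma$ into itself with norm bounded by $\|(B-\lambda\id_E)^{-1}\|_\Gamma\|W\|_\Gamma$. The composition rule then yields $(\mathcal{L}_B-\lambda\,\id)^{-1}=\mathcal{L}_{(B-\lambda\id_E)^{-1}}$, giving $\lambda\in\rho(\mathcal{L}_B,L^k_\Gamma(E,E))$.

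No real obstacle arises; the only point that requires care is ensuring that the candidate inverses actually land in $L^k_\Gamma$ with bounded $\|\cdot\|_\Gamma$-norm (not merely in $L^k$), which is precisely where the algebra-type bounds of Proposition~\ref{LandM:Proposition:LGAlgebraPropertyMultilinear} and its Corollary are used, and which explains why the $\Gamma$-spectrum of $A$ (respectively $B$) is the natural right-hand side rather than the ordinary spectrum.
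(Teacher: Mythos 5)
Your proof is correct and follows essentially the same route as the paper: both pass to the contrapositive, rewrite $(\mathcal{R}_{j,A}-\lambda\,\id)W$ as $\mathcal{R}_{j,A-\lambda\id}(W)$, and invert via $\mathcal{R}_{j,(A-\lambda\id)^{-1}}$ (respectively $\mathcal{L}_{(B-\lambda\id)^{-1}}$), using Proposition~\ref{LandM:Proposition:LGAlgebraPropertyMultilinear} to ensure the candidate inverse stays in $L^k_\Gamma$. Your version merely makes explicit the composition identities that the paper treats as ``formal''.
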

\begin{proof}
  Let $\lambda\in \rhogamma(A)$, thus
  $(A-\lambda\id)^{-1}\in L_\Gamma(E, E)$. To study the invertibility
  of $\mathcal{R}_{j, A}-\lambda\id$ we consider the equation
  \[W(u_1, \ldots, A u_j, \ldots, u_k)-\lambda W(u_1, \ldots, u_j,
  \ldots, u_k)=H(u_1, \ldots,u_j, \ldots, u_k),\] for $W,\,H\in
  L^k_\Gamma(E, E)$, which is equivalent to
  \[W(u_1, \ldots, (A-\lambda\id)u_j,\ldots, u_k)=H(u_1, \ldots, u_j,
  \ldots, u_k).\] Formally,
  \[W=\mathcal{R}_{j, (A-\lambda\id)^{-1}}H\] and hence $W\in
  L^k_\Gamma(E, E)$ and $\lambda\in \rho(\mathcal{R}_{j, A})$.

  The proof of the result for $\mathcal{L}_B$ is completely analogous.

\end{proof}

\begin{proof}[Proof of Proposition
  \ref{Preliminaries:SylvesterGamma:Proposition}]
  It follows directly from the fact that
  \begin{equation}
    \label{Preliminaries:Functional:Equation:Sylvester}
\mathcal{S}_{B, A}=\mathcal{L}_B\circ\mathcal{R}_{1,A}\circ\ldots\circ \mathcal{R}_{k, A}
  \end{equation}
  and the fact that the operators on the r.h.s. of
  \eqref{Preliminaries:Functional:Equation:Sylvester} commute. Then Theorem
  \ref{ThmRudin} proves the result.
  
\end{proof}

%%%%%%%%%%%%%%%%%%%%%%%%%%%%%%%%%%%%%%%%%%%%%%%%%%%%%%%%%%%%%%%%%%%%%%%%%%%%%%%%%%%%%%%%%%

%%%%%%%%%%%%%%%%%%%%%%%%%%%%%%%%%%%%%%%%%%%%%%%%%%%%%%%%%%%%%%%%%%%%%%%%%%%%%%%%%%%%%%%%%%

\section{Normal forms of maps in lattices}
\label{Section:NormalFormsSternberg}

In this section we consider the computation of normal forms around a
fixed point of a map in a lattice, assuming the map has decay
properties. We estimate the decay properties of the normal form and
the transformation leading to it.

To study the decay properties of normal forms we will use Sylvester operators in
spaces with decay, introduced in the previous section.

We consider an open set $U$ of $\ell^\infty(\mathbb{R}^n)$ such that
$0\in U$ and a map 
\[F:U\to\ell^\infty(\mathbb{R}^n)\]such that $F(0)=0$ and $F\in
C^r_\Gamma(U, \ell^\infty(\mathbb{R}^n))$. Let $A=DF(0)$, with $A\in
L_\Gamma(\ell^\infty(\mathbb{R}^n), \ell^\infty(\mathbb{R}^n))$, invertible and
consider its $\Gamma$-spectrum $\spec_\Gamma(A)$.

\begin{theorem}\label{Sternberg:NormalForms:Theorem}
  In the previously described setting there exist polynomials $K\in
  C^\infty_\Gamma(\ell^\infty(\mathbb{R}^n),
  \ell^\infty(\mathbb{R}^n))$ and $H\in
  C^\infty_\Gamma(\ell^\infty(\mathbb{R}^n),
  \ell^\infty(\mathbb{R}^n))$ of degree at most $r$ such that
  $K(0)=0$, $DK(0)=\id$ and
  \[F\circ K(x)-K\circ H(x)=o(\|x\|^r)\] and $H(x)=Ax+\sum_{j\in
    J}H_jx^{\otimes j}$ with $H_j\in
  L_\Gamma^j(\ell^\infty(\mathbb{R}^n), \ell^\infty(\mathbb{R}^n))$ where
  \[J=\{2\leq j\leq r\,\vert\,
  (\spec_\Gamma(A))^j\cap\spec_\Gamma(A)\neq \emptyset\}.\]
\end{theorem}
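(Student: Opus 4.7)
The plan is to construct $K$ and $H$ degree by degree, solving at each order a homological equation built from the Sylvester operators of Section \ref{Preliminaries:SylvesterInLk}. First I would Taylor expand
\[
F(x) = Ax + \sum_{j=2}^{r}\tfrac{1}{j!}D^{j}F(0)\,x^{\otimes j}+o(\|x\|^{r}),
\]
and note that each multilinear coefficient $F_{j}:=\tfrac{1}{j!}D^{j}F(0)$ lies in $L^{j}_{\Gamma}$ because $F\in C^{r}_{\Gamma}$. Write the unknowns as polynomials $K(x)=x+\sum_{j=2}^{r}K_{j}x^{\otimes j}$ and $H(x)=Ax+\sum_{j\in J}H_{j}x^{\otimes j}$, with $K_{j},H_{j}\in L^{j}_{\Gamma}$ to be determined inductively. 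Expanding $F\circ K$ and $K\circ H$ (a Fa\`a di Bruno style computation whose $\Gamma$-decay is controlled term by term via Proposition~\ref{LandM:Proposition:LGAlgebraPropertyMultilinear}) and collecting homogeneous terms of degree $j$ produces, at each $j=2,\dots,r$, an equation of the shape
\[
(\mathcal{L}_{A}-\mathcal{S}_{\id,A})K_{j} + \chi_{J}(j)\,H_{j} = G_{j},
\]
where $G_{j}\in L^{j}_{\Gamma}$ is a polynomial expression in $F_{2},\dots,F_{j}$ and the previously constructed $K_{l},H_{l}$ for $l<j$, and $\mathcal{S}_{\id,A}=\mathcal{R}_{1,A}\cdots\mathcal{R}_{j,A}$.

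At order $j$ I solve this equation using the spectral theory developed for the Sylvester operators. Since $\mathcal{L}_{A}$ and $\mathcal{S}_{\id,A}$ commute, the joint spectral mapping theorem combined with Proposition~\ref{Preliminaries:SylvesterGamma:Proposition} gives
\[
\spec\bigl(\mathcal{L}_{A}-\mathcal{S}_{\id,A},\,L^{j}_{\Gamma}\bigr)\subseteq \spec_{\Gamma}(A)-\bigl(\spec_{\Gamma}(A)\bigr)^{j}.
\]
If $j\notin J$ the right-hand side does not contain $0$, so $\mathcal{L}_{A}-\mathcal{S}_{\id,A}$ is invertible in $L^{j}_{\Gamma}$; set $H_{j}=0$ and $K_{j}=(\mathcal{L}_{A}-\mathcal{S}_{\id,A})^{-1}G_{j}$. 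If $j\in J$ I apply the $\Gamma$-spectral projection of Lemma~\ref{GapProjections:Lemma:Projector} and Theorem~\ref{GapProjections:Theorem:Spectrum} to the operator $\mathcal{L}_{A}-\mathcal{S}_{\id,A}$ acting on the Banach space $L^{j}_{\Gamma}$: decompose its spectrum into a piece $\sigma_{1}$ that contains $0$ and a piece $\sigma_{2}$ bounded away from $0$, obtaining an invariant splitting $L^{j}_{\Gamma}=\mathcal{E}^{\mathrm{nr}}_{j}\oplus\mathcal{E}^{\mathrm{r}}_{j}$ with $\mathcal{L}_{A}-\mathcal{S}_{\id,A}$ invertible on $\mathcal{E}^{\mathrm{nr}}_{j}$. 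Splitting $G_{j}=G_{j}^{\mathrm{nr}}+G_{j}^{\mathrm{r}}$ accordingly, set $H_{j}:=G_{j}^{\mathrm{r}}\in L^{j}_{\Gamma}$ and $K_{j}:=\bigl((\mathcal{L}_{A}-\mathcal{S}_{\id,A})|_{\mathcal{E}^{\mathrm{nr}}_{j}}\bigr)^{-1}G_{j}^{\mathrm{nr}}\in L^{j}_{\Gamma}$.

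Proceeding inductively from $j=2$ through $j=r$ determines all coefficients. The resulting $K$ and $H$ are polynomials of degree at most $r$ whose coefficients lie in $L^{j}_{\Gamma}$, hence they belong to $C^{\infty}_{\Gamma}(\ell^{\infty}(\mathbb{R}^{n}),\ell^{\infty}(\mathbb{R}^{n}))$. By construction the Taylor jets of $F\circ K$ and $K\circ H$ at $0$ agree up to order $r$, so $F\circ K(x)-K\circ H(x)=o(\|x\|^{r})$. The main technical difficulty I foresee lies in the resonant step $j\in J$: the $\Gamma$-spectrum of $\mathcal{L}_{A}-\mathcal{S}_{\id,A}$ is contained in the compact set $\spec_{\Gamma}(A)-(\spec_{\Gamma}(A))^{j}$ but might in principle cluster near $0$, so verifying that the contour in Lemma~\ref{GapProjections:Lemma:Projector} can be chosen to isolate $0$ cleanly requires a careful choice of $\sigma_{1},\sigma_{2}$; once this is settled the remaining work is bookkeeping, using Proposition~\ref{LandM:Proposition:LGAlgebraPropertyMultilinear} and Remark~\ref{DiffFunDecay:Remark:CompositionLinearMap} to propagate $\Gamma$-decay through each composition.
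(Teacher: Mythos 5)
Your overall strategy --- expanding $K$ and $H$ as polynomials with coefficients in $L^{j}_{\Gamma}$, deriving a homological equation at each order via Fa\`a di Bruno, and solving it with the Sylvester-operator spectral estimates of Proposition~\ref{Preliminaries:SylvesterGamma:Proposition} --- is exactly the paper's. For the non-resonant orders your treatment is essentially equivalent: you work with the additive operator $\mathcal{L}_{A}-\mathcal{S}_{\id,A}$ and the sum version of the commuting-spectra inclusion, whereas the paper multiplies the equation by $A^{-1}$ and works with $\mathcal{S}_{A^{-1},A}-\id$, so that the product form of Theorem~\ref{ThmRudin} applies directly and invertibility reduces to $1\notin\spec_{\Gamma}(A^{-1})\cdot(\spec_{\Gamma}(A))^{k}$. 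Both routes are fine.

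The genuine problem is your resonant step $j\in J$. You propose to split $\spec\bigl(\mathcal{L}_{A}-\mathcal{S}_{\id,A}\bigr)$ into a piece $\sigma_{1}$ containing $0$ and a piece $\sigma_{2}$ bounded away from $0$ and then invoke Lemma~\ref{GapProjections:Lemma:Projector}. That lemma requires the two pieces to be separated by a contour disjoint from the spectrum, i.e.\ it requires an actual spectral gap around $\sigma_1$. Nothing guarantees such a gap here: the spectrum is only known to be \emph{contained} in the compact set $\spec_{\Gamma}(A)-(\spec_{\Gamma}(A))^{j}$, which can be connected and contain $0$ as a non-isolated point, and in the lattice setting $\spec_{\Gamma}(A)$ is typically a continuum. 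You flag this yourself as a difficulty to be ``settled,'' but it is not a technicality --- the required decomposition may simply not exist, and with it your definitions of $H_{j}$ and $K_{j}$ collapse. The paper avoids the issue entirely: when $\spec_{\Gamma}(A)\cap(\spec_{\Gamma}(A))^{k}\neq\emptyset$ it makes the trivial choice $K_{k}=0$ and $H_{k}=G_{k}$, which manifestly lies in $L^{k}_{\Gamma}$ by Proposition~\ref{LandM:Proposition:LGAlgebraPropertyMultilinear} and satisfies the homological equation; the finer splitting (into the range of the Sylvester operator plus a complement) is mentioned only as an optional refinement, not via spectral projections. Replacing your projection argument by this trivial choice closes the gap and the rest of your proof goes through.
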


\begin{corollary}\label{Sternberg:NormalForms:Corollary}
  Under the conditions of the previous theorem,
  if \[(\spec_\Gamma(A))^j\cap\spec_\Gamma(A)=\emptyset,\qquad 2\leq
  j\leq r,\] then there exists a polynomial $K\in
  C^\infty_\Gamma(\ell^\infty(\mathbb{R}^n),
  \ell^\infty(\mathbb{R}^n))$, $K(0)=0$, $DK(0)=\id$, such that \[F\circ K(x)-K\circ
  Ax=o(\|x\|^r).\]
\end{corollary}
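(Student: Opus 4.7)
The plan is to derive the corollary as an immediate specialization of Theorem \ref{Sternberg:NormalForms:Theorem}. The theorem already produces polynomials $K$ and $H$ with $K(0)=0$, $DK(0)=\id$, satisfying
\[
F\circ K(x) - K\circ H(x) = o(\|x\|^r),
\]
where the nonlinear part of $H$ is indexed by the resonant degrees
\[
J = \{2 \leq j \leq r \mid (\spec_\Gamma(A))^j \cap \spec_\Gamma(A) \neq \emptyset\}.
\]
So the only thing I need to do is verify that the hypothesis of the corollary forces $J$ to be empty, and then read off the conclusion.

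First I would invoke Theorem \ref{Sternberg:NormalForms:Theorem} directly with the same $F$, $A$, and $r$, obtaining the polynomials $K \in C^\infty_\Gamma(\ell^\infty(\mathbb{R}^n),\ell^\infty(\mathbb{R}^n))$ and $H \in C^\infty_\Gamma(\ell^\infty(\mathbb{R}^n),\ell^\infty(\mathbb{R}^n))$, of degree at most $r$, along with the multilinear terms $H_j \in L^j_\Gamma(\ell^\infty(\mathbb{R}^n),\ell^\infty(\mathbb{R}^n))$ for $j \in J$. The normalisation conditions $K(0)=0$ and $DK(0)=\id$ and the $C^\infty_\Gamma$ regularity of $K$ are exactly what the corollary demands, so $K$ will serve unchanged.

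Next I would use the hypothesis $(\spec_\Gamma(A))^j \cap \spec_\Gamma(A) = \emptyset$ for every integer $j$ in the range $2 \leq j \leq r$. This is precisely the negation of the condition defining $J$, so $J = \emptyset$ and the sum $\sum_{j \in J} H_j x^{\otimes j}$ is empty. Consequently $H(x) = Ax$, and the conjugacy relation from the theorem reads
\[
F\circ K(x) - K\circ A x = o(\|x\|^r),
\]
which is exactly the assertion of the corollary.

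There is no hard step here: the entire content is the observation that the non-resonance assumption eliminates every term in the normal form except the linear part, so the full normal-form theorem collapses to linearisation up to order $r$. The only subtlety one might want to double-check is that the decay of $K$ is preserved, but this is already guaranteed by the theorem's conclusion that $K \in C^\infty_\Gamma$, so nothing extra needs to be verified.
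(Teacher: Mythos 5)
Your proposal is correct and matches the paper's intent exactly: the corollary is an immediate specialization of Theorem \ref{Sternberg:NormalForms:Theorem}, since the non-resonance hypothesis makes the resonant index set $J$ empty, forcing $H(x)=Ax$. The paper gives no separate proof for this corollary precisely because this observation is all that is needed.
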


\begin{proof}[Proof of Theorem \ref{Sternberg:NormalForms:Theorem}]
  We look for $K$ and $H$ in the form 
  \begin{align}
     K(x)=\sum_{j=1}^r K_j x^{\otimes j},\nonumber\\
     H(x)=\sum_{j=1}^r H_j x^{\otimes j},\nonumber
  \end{align}
  where $K_j, H_j\in L^j_\Gamma(\ell^\infty(\mathbb{R}^n),
  \ell^\infty(\mathbb{R}^n))$. Taking derivatives on both sides of
  \begin{equation}
    F\circ K=K\circ H \label{Sternberg:NormalForms:Equation:Invariance}
  \end{equation}
  and evaluating at 0 we have
\[A\,K_1=K_1\,H_1.\] This equation has the obvious solution $K_1=\id$,
$H_1=A$, although other solutions are possible, for instance taking $K_1$ as any
linear map which commutes with $A$, like $K_1=\alpha\id$, $\alpha\in
\mathbb{R}$ and $H_1=A$. However, in what follows we make the choice $K_1=\id$ and $H_1=A$.

Taking $k$-th order derivatives on both sides of
\eqref{Sternberg:NormalForms:Equation:Invariance}, using the Fa\`a di Bruno
formula,
\begin{align}
  \sum_{j=1}^k\sum_{\substack{i_1,\ldots,
      i_j\geq1\\i_1+\ldots+i_j=k}}CD^jF&\circ K(D^{i_1}K\cdots
  D^{i_j}K) \nonumber\\ &=\sum_{j=1}^k\sum_{\substack{i_1,\ldots,
      i_j\geq1\\i_1+\ldots+i_j=k}}CD^jK\circ H(D^{i_1}H\cdots
  D^{i_j}H)\nonumber
\end{align}
(where for the sake of simplicity we have not written the dependence
of $C$ on the indices), and evaluating the derivatives at 0 we can
write
\begin{equation}
   AK_k+G^1_k=H_k+K_kA^{\otimes k}+G^2_k,\label{NormalForms:Theorem:Homological}
\end{equation}
where $G^1_k$, $G^2_k$ are $k$-linear maps which depend on $D^jF(0)$,
$2\leq j\leq r,$ and $K_i$, $H_i$, $2\leq i\leq r-1$.

Let $G_k=G_k^1-G_k^2$. Observe that $G_k$ consists of sums and contractions of
multilinear operators.

Using Sylvester operators (introduced in Section
\ref{Preliminaries:SylvesterInLk}) we rewrite Equation
\eqref{NormalForms:Theorem:Homological} as
\begin{equation}
  \left(\mathcal{S}_{A^{-1}, A^{\phantom{1}}}-\id\right)K_k=A^{-1}(-H_k+G_k).
  \label{NormalForms:Theorem:Homological_2}
\end{equation}

Now we proceed inductively from $k=2$ up to $k=r$. Assume that for $j$ up to the
$(k-1)$-th step we have obtained $K_j$ and $H_j$ in
$L^j_\Gamma(\ell^\infty(\mathbb{R}^n), \ell^\infty(\mathbb{R}^n))$ by solving
Equation \eqref{NormalForms:Theorem:Homological_2}. From the way $G_k$ is
defined, Proposition \ref{LandM:Proposition:LGAlgebraPropertyMultilinear} proves
that $G_k\in L_\Gamma^k(\ell^\infty(\mathbb{R}^n), \ell^\infty(\mathbb{R}^n))$
and hence \[A^{-1}(-H_k+G_k)\in L^k_\Gamma(\ell^\infty(\mathbb{R}^n),
\ell^\infty(\mathbb{R}^n)).\] Now by Proposition
\ref{Preliminaries:SylvesterGamma:Proposition}, if \[\spec_\Gamma(A)\cap
  \left(\spec_\Gamma(A)\right)^{k}=\emptyset,\] then $1\notin \spec(
\mathcal{S}_{A^{-1}, A^{\phantom{1}}})$, thus $\left(\mathcal{S}_{A^{-1},
    A}-\id\right):L^k_\Gamma\to L^k_\Gamma$ is invertible. This implies we can
choose $H_k=0$ and $K_k=(\mathcal{S}_{A^{-1},A}-\id)^{-1}A^{-1}G_k$.

Obviously, with this choice $H_k, K_k\in
L^k_\Gamma(\ell^\infty(\mathbb{R}^n), \ell^\infty(\mathbb{R}^n))$. On
the other hand, if
\[\spec_\Gamma(A)\cap \left(\spec_\Gamma(A)\right)^{k}\neq \emptyset\]
the operator $\mathcal{S}_{A^{-1}, A}$ may not be invertible and we
set $H_k=G_k$ and $K_k=0$. This is not the only possible choice, it is
only the simplest one and also has $K_k, H_k\in
L^k_\Gamma(\ell^\infty(\mathbb{R}^n), \ell^\infty(\mathbb{R}^n))$.

Another standard possibility is to decompose $L^k_\Gamma(E,
E)=\operatorname{Im}\mathcal{S}_{A^{-1}, A^{\phantom{1}}}\oplus V$,
where $\operatorname{Im}$ stands for the range of
$\mathcal{S}_{A^{-1}, A^{\phantom{1}}}$ and $V$ is a complementary
subspace in $L^k_\Gamma(E, E)$. Then one decomposes $A^{-1}G_k$
according to this splitting of the space as
$(A^{-1}G_k)^\mathrm{Im}+(A^{-1}G_k)^V$ and chooses $K_k$ such that
$(S_{A^{-1}, A^{\phantom{1}}}-\id )K_k=(A^{-1}G_k)^\mathrm{I}$ and
$H_k=A(A^{-1}G_k)^V$. Of course this choice also depends on the choice
of the complementary space $V$.

By the choices of $K_k$, $H_k,$ $1\leq k\leq r,$ 
\[D^k\left[F\circ K-K\circ H\right]=0,\qquad 1\leq k\leq r,\] and
hence, by Taylor's theorem, $F\circ K(x)-K\circ H(x)=o(\|x\|^r)$.

\end{proof}
%%%%%%%%%%%%%%%%%%%%%%%%%%%%%%%%%%%%%%%%%%%%%%%%%%%%%%%%%%%%%%%%%%%%%%%%%%
\section{Sternberg theorems in lattices}\label{sec:sternberg-thms}
%%%%%%%%%%%%%%%%%%%%%%%%%%%%%%%%%%%%%%%%%%%%%%%%%%%%%%%%%%%%%%%%%%%%%%%%%%

In this section we will prove several Sternberg conjugation theorems for
contractions under several non-resonance hypotheses. All of them are adaptations
to our setting of the classical proof in \cite{SternbergI}, using the normal
form theory developed in the previous section.

We will begin by proving Theorem \ref{Sternberg:Theorem:Sternberg}. First, two
remarks from the requirements.

\begin{remark}
  Note that we do not require $F_1$ to be small but only
  $B=DF_1(0)$ to be small in the $\Gamma$-norm.
\end{remark}
\begin{remark}
  Since $r_0$ is finite, assumption
  \ref{Sternberg:Theorem:Sternberg:H2} involves only a finite set
  of conditions.
\end{remark}

Before starting the proof we perform a rescaling of $F$ in order to transfer the
smallness conditions on the domain of definition to smallness of an auxiliary
parameter.

Let $\delta\in\mathbb{R}$, $\delta>0$, and the rescaling map $T_\delta x=\delta x$. We define
\[
F_\delta(x)=T^{-1}_\delta\circ F\circ
T_\delta(x)=Mx+N_\delta(x),
\]
where $N_\delta(x)=\delta^{-1}N(\delta x)$.

From now on we will not write the dependence on $\delta$ of
$F_\delta(x)$ and $N_\delta(x)$ and we will assume that $F$ is defined
on $B(0,1)\subset \ell^\infty(\mathbb{R}^n)$ and $\delta$ is as small
as needed. In particular, if $F$ is at least of class $C^2$, we have
that
\[
\|N\|_{C^0}=\mathcal{O}(\delta),\quad \|DN\|_{C^0}=\mathcal{O}(\delta)\quad\textrm{ and }\quad\|D^jN\|_{C^0}=\mathcal{O}(\delta^{j-1}),\qquad j\geq2,
\]
and moreover
\[
\|N\|_{C^r_\Gamma}=\mathcal{O}(\delta).
\]

Given $r, r_0\in\mathbb{N}$, $r\geq r_0$, we introduce the spaces
\begin{align}
  \chi^{r,r_0}&=\left\{g\in C^r(B(0,1),\ell^\infty(\mathbb{R}^n))\,\vert\, D^jg(0)=0,\,0\leq j\leq r_0,\, \|g\|_{C^r}<\infty\right\},\nonumber\\
  \chi^{r,r_0}_\Gamma&=\left\{g\in
    C_\Gamma^r(B(0,1),\ell^\infty(\mathbb{R}^n))\cap\chi^{r,r_0}\,\vert\,
    \|g\|_{C^r_\Gamma}<\infty\right\}.\nonumber
\end{align}
Observe that $\chi^{r,r_0}_\Gamma$ is a closed subspace of $C^r_\Gamma$.

\begin{lemma}\label{Sternberg:Lemma:OperatorWD}
Assume the hypotheses of Theorem \ref{Sternberg:Theorem:Sternberg}. 

Then, if $r\geq r_0$, $F\in C^r_\Gamma(U,
\ell^\infty(\mathbb{R}^n))$ and $\|B\|_\Gamma$ and the rescaling
parameter $\delta$ are small enough, the linear operator
$\mathcal{G}_m:\chi_\Gamma^{r,r_0}\to\chi_\Gamma^{r,r_0}$ defined by
\[\mathcal{G}_m(g)=M^{-m}g\circ F^m\]
is well defined and is a contraction in the $C^r_\Gamma$-norm.
\end{lemma}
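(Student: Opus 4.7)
The proof turns on the competition between the $\Gamma$-growth $\|M^{-m}\|_\Gamma\sim\alpha^{-m}$ and the sup-norm contraction $\|F^m(x)\|_\infty\lesssim\beta^m\|x\|_\infty$, absorbed by the $(r_0+1)$-fold vanishing of $g$ at $0$; since $r_0=[\nu]+1$ with $\nu=\log\alpha/\log\beta$ gives $r_0+1>\nu$, iterating $m$ times produces a net factor $\alpha^{-m}\beta^{m(r_0+1)}\to 0$. Well-definedness follows from the framework: $A$ uncoupled puts $A^{-1}$ in $L_\Gamma$ by Remark \ref{LandM:Remark:UncoupledGamma0}, Proposition \ref{LandM:Lemma:DInvertibleEasyCase} together with the smallness of $\|B\|_\Gamma$ promotes this to $M^{-m}\in L_\Gamma$, and the rescaling reduces $F$ to $Mx+N$ with $\|N\|_{C^r_\Gamma}=\mathcal{O}(\delta)$, so in an adapted norm $F$ contracts $B(0,1)$ with $F^m(B(0,1))\subset B(0,\kappa^m)$ for some $\kappa\in(\beta,1)$. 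The composition proposition of Section \ref{Section:DiffFunDecay} then places $g\circ F^m\in C^r_\Gamma$, Remark \ref{DiffFunDecay:Remark:CompositionLinearMap} lifts this to $M^{-m}(g\circ F^m)\in C^r_\Gamma$, and the chain rule at $x=0$ combined with $F^m(0)=0$ and $D^jg(0)=0$ for $j\le r_0$ forces $D^j(\mathcal{G}_m g)(0)=0$ in the same range, so $\mathcal{G}_m g\in\chi^{r,r_0}_\Gamma$.

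The quantitative estimates the contraction rests on are $\|M^{-m}\|_\Gamma\le C(\alpha^{-1}+\epsilon)^m$ and $\sup_{x\in B(0,1)}\|D^kF^m(x)\|_\Gamma\le C_k\kappa^m$ for $1\le k\le r$. The first follows from $\spec_\Gamma(A)=\spec(\mathfrak{a})$, Proposition \ref{Preliminaries:Operational:Prop:Continuity} giving $\rgamma(M^{-1})\le\alpha^{-1}+\epsilon/2$, and the Gelfand formula in the unital Banach algebra $L_\Gamma$ (Section \ref{Preliminaries:Functional}). The second I would obtain for $k=1$ by expanding $\prod_{j=0}^{m-1}DF(F^j(x))$ as a binomial-style sum in $M$ and the small perturbations $R_j=DN(F^j(x))$, bounding each summand by Proposition \ref{LandM:Proposition:LGAlgebraProperty} together with $\|M^a\|_\Gamma\le C(\beta+\epsilon/2)^a$ (Gelfand again), and summing to $C(\beta+\epsilon/2+C\delta)^m\le C\kappa^m$ for $\delta$ small; the case $k\ge 2$ follows by induction using Proposition \ref{LandM:Proposition:LGAlgebraPropertyMultilinear} and the $\mathcal{O}(\delta)$ size of $D^jN$ for $j\ge 2$. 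Finally, Taylor's theorem for $g$ at $0$ combined with Proposition \ref{LandM:Proposition:ContractionNorm} applied to $(r_0+1-j)$ fixed slots of $D^{r_0+1}g$ gives
\[
\|D^jg(y)\|_\Gamma\le C\,\|g\|_{C^r_\Gamma}\,\|y\|^{r_0+1-j},\qquad 0\le j\le r_0,
\]
and $\|D^jg(y)\|_\Gamma\le\|g\|_{C^r_\Gamma}$ trivially for $j>r_0$.

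I would then expand $D^k(g\circ F^m)$ by Fa\`a di Bruno into a finite sum of terms $D^jg(F^m(x))\cdot(D^{i_1}F^m(x),\ldots,D^{i_j}F^m(x))$ with $\sum i_l=k$, bound each in $\Gamma$-norm by $\|D^jg(F^m(x))\|_\Gamma\prod_l\|D^{i_l}F^m(x)\|_\Gamma$ via Proposition \ref{LandM:Proposition:LGAlgebraPropertyMultilinear}, and substitute the estimates above along with $\|F^m(x)\|\le\kappa^m$. Each term then acquires a factor $\kappa^{mE}$ with $E\ge r_0+1$: for $j\le r_0$ the Taylor contribution $\kappa^{m(r_0+1-j)}$ and the $j$ derivative factors $\kappa^{mj}$ combine to $\kappa^{m(r_0+1)}$; for $j>r_0$ the $j$ derivative factors alone give $\kappa^{mj}$ with $j\ge r_0+1$. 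Multiplying by $\|M^{-m}\|_\Gamma$ bounds every term by $C[(\alpha^{-1}+\epsilon)\kappa^{r_0+1}]^m\|g\|_{C^r_\Gamma}$, with base strictly less than $1$ because $r_0+1>\nu$, so $\|\mathcal{G}_m g\|_{C^r_\Gamma}\le C\rho^m\|g\|_{C^r_\Gamma}$ with $\rho<1$, and choosing $m$ large enough delivers contraction. The main obstacle is the $\Gamma$-norm bound $\|D^kF^m\|_\Gamma\le C_k\kappa^m$: naively $\|DF\|_\Gamma^m$ may exceed $1^m$, so the argument must separate $M$ (controlled by its $\Gamma$-spectral radius via Gelfand) from the small $DN$ perturbations and use Proposition \ref{LandM:Proposition:LGAlgebraProperty} carefully, exactly where the combined $\Gamma$-spectral and algebra machinery of Sections \ref{Preliminaries:Functional} and \ref{Section:LinearAndMultilinearWithDecay} becomes essential.
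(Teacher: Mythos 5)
Your overall strategy coincides with the paper's: rescale so that $\|N\|_{C^r_\Gamma}=\mathcal{O}(\delta)$, fix $m$ from the condition $\alpha^{-1}\beta^{r_0}<1$, expand $D^p(g\circ F^m)$ by Fa\`a di Bruno, exploit the vanishing of the low-order derivatives of $g$ at $0$ through Taylor's theorem, and control everything with the algebra properties of $\|\cdot\|_\Gamma$. Your treatment of the "main obstacle" ($\|DF\|_\Gamma^m$ being useless) is also sound: separating the powers of $M$ from the $\mathcal{O}(\delta)$ perturbations is exactly what is needed, and the paper does the same thing in the compressed form $\|DF^m(x)\|_\Gamma\leq\|M^m\|_\Gamma+\mathcal{O}(\delta)$. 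One stylistic divergence: you bound $\|M^{\pm m}\|_\Gamma$ via the Gelfand formula and upper semicontinuity of $\spec_\Gamma$, whereas the paper uses an adapted norm on $\mathbb{R}^n$ together with $\|A^{\pm m}\|_\Gamma=\Gamma(0)^{-1}\|\mathfrak{a}^{\pm m}\|$ (Remark \ref{LandM:Remark:UncoupledGamma0}) and then perturbs to $M$; your route works but you must be careful with the order of quantifiers, since the index from which the Gelfand bound is effective depends on $M$, hence on $B$, while $\delta$ must then be chosen after $m$.

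There is, however, one genuine gap: your Taylor estimate
\[
\|D^jg(y)\|_\Gamma\le C\,\|g\|_{C^r_\Gamma}\,\|y\|^{r_0+1-j},\qquad 0\le j\le r_0,
\]
uses the integral remainder built from $D^{r_0+1}g$, which requires $g\in C^{r_0+1}$. The lemma only assumes $r\geq r_0$, and the space $\chi^{r,r_0}_\Gamma$ with $r=r_0$ contains functions that are merely $C^{r_0}$; for such $g$ the quantity $D^{r_0+1}g$ does not exist and $\|g\|_{C^r_\Gamma}$ controls nothing beyond order $r_0$. The qualitative statement "$g$ vanishes to order $r_0+1$" only yields $g(x)=o(\|x\|^{r_0})$, not a bound of the form $C\|g\|_{C^{r_0}_\Gamma}\|x\|^{r_0+1}$. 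The fix is what the paper does: write the remainder with $D^{r_0}g$, obtaining $\|D^jg(y)\|_\Gamma\leq\|g\|_{C^r_\Gamma}\|y\|^{(r_0-j)_+}$, and observe that in each Fa\`a di Bruno term the total exponent satisfies $(r_0-j)_+ + j\geq r_0$ (with $j$ replaced by $p$ for the derivative order), so every term carries at least $\kappa^{mr_0}$. This is still enough, since $r_0=[\nu]+1>\nu$ already gives $\alpha^{-1}\beta^{r_0}<1$; the extra power $\kappa^{m}$ you extract is not needed and is precisely what costs you the case $r=r_0$.
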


\begin{proof}
  First, we fix some quantities to be used throughout the proof. From
  the definition of $r_0$ and the fact that $\beta<1$ we have
\[\alpha^{-1}\beta^{r_0}<1.\]
Then there exists $m\in \mathbb{N}$ such that
\[\Gamma(0)^{-2}\left(\alpha^{-1}\beta^{r_0}\right)^{m}<1\]
and also there exist positive numbers $\varepsilon_1$,
$\varepsilon_2$ and $\varepsilon_3$ such that
\begin{equation}
  \Gamma(0)^{-1}(\alpha^{-1}+\varepsilon_1)^m\left[\Gamma(0)^{-1}\left((\beta+\varepsilon_1)^m+\varepsilon_2\right)^{r_0}+\varepsilon_3\right]<1.\label{Sternberg:Lemma:OperatorWD:Equation:Epsilons}
\end{equation}

Note that this condition requires $\beta+\varepsilon_1<1$. There exists a norm
in $\mathbb{R}^n$ such that
\[\|\mathfrak{a}\|<\beta+\frac{\varepsilon_1}{2},\qquad
\|\mathfrak{a}^{-1}\|<\alpha^{-1}+\frac{\varepsilon_1}{2},\] where
$\|\cdot\|$ is the associated operator norm. Clearly, 
\[\|\mathfrak{a}^m\|<\left(\beta+\frac{\varepsilon_1}{2}\right)^m,\qquad
\|\mathfrak{a}^{-m}\|<\left(\alpha^{-1}+\frac{\varepsilon_1}{2}\right)^m\] and
\[\|A^m\|_\Gamma<\Gamma(0)^{-1}\left(\beta+\frac{\varepsilon_1}{2}\right)^m,\qquad
\|A^{-m}\|_\Gamma\leq
\Gamma(0)^{-1}\left(\alpha^{-1}+\frac{\varepsilon_1}{2}\right)^m.\] Moreover,
\begin{align}
  \|M^m\|_\Gamma&=\|(A+B)^m\|_\Gamma\nonumber\\ &\leq
  \|A^m\|_\Gamma+\mathcal{O}(\|B\|_\Gamma)\leq
  \Gamma(0)^{-1}(\beta+\frac{\varepsilon_1}{2})^m+\Gamma(0)^{-1}m\frac{\varepsilon_1}{2}\beta^{m-1}\nonumber\\
  &<\Gamma(0)^{-1}(\beta+\varepsilon_1)^m\nonumber
 \end{align}
 if $\|B\|_\Gamma$ is small enough.

 In the same way, now using Proposition \ref{LandM:Lemma:DInvertibleEasyCase},
 \begin{align}
   \|M^{-m}\|_\Gamma&\leq\|A^{-m}\|_\Gamma+\mathcal{O}(\|B\|_\Gamma)\nonumber\\
   &\leq \Gamma(0)^{-1}(\alpha^{-1}+\frac{\varepsilon_1}{2})^m+\Gamma(0)^{-1}m\frac{\varepsilon_1}{2}\alpha^{-(m-1)}<\Gamma(0)^{-1}(\alpha^{-1}+\varepsilon_1)^m. \nonumber
 \end{align}
Analogously,
\[\|M^m\|\leq \|M\|^m\leq (\beta+\varepsilon_1)^m.\]

Let $g\in\chi^{r,r_0}_\Gamma$. By Remark
\ref{DiffFunDecay:Remark:CompositionLinearMap} we have 
\[\|\mathcal{G}_m(g)\|_{C^r_\Gamma}\leq \|M^{-m}\|_\Gamma\|g\circ
  F^m\|_{C^r_\Gamma}.\] To estimate $\|g\circ F^m\|_{C^r_\Gamma}$ we will use
the Fa\`a di Bruno formula for the $p$-th derivative of $g\circ F^m$, $1\leq
p\leq r$,
\begin{align}
   D^p(g\circ F^m)(x)=&D^pg(F^m(x))(DF^m(x))^{\otimes p}\nonumber\\ &+\sum_{j=1}^{p-1}\sum_{\substack{i_1,\ldots,i_j\geq1\\i_1+\ldots+i_j=p}}CD^jg(F^m(x))D^{i_1}F^m(x)\cdots D^{i_j}F^m(x),\label{Sternberg:Lemma:OperatorWD:Faa}
\end{align}
where $C$ is a combinatorial coefficient which depends on all
indices in the sum. From \eqref{Sternberg:Lemma:OperatorWD:Faa} it is clear that
$D^p(g\circ F^m)(0)=0$ for $1\leq p\leq r_0$, since $F(0)=0$.

Since $g\in \chi^{r,r_0}$, by Taylor's theorem in integral form (see
\cite{MarsdenRatiu}),
\[g(x)=\frac{1}{(r_0-1)!}\int_0^1(1-t)^{r_0-1}D^{r_0}g(tx)x^{\otimes r_0}\dd t\]
and also
\[D^jg(x)=\frac{1}{(r_0-j-1)!}\int_0^1(1-t)^{r_0-j-1}D^{r_0}g(tx)x^{\otimes (r_0-j)}\dd t,
\qquad 0\leq j \leq r_0-1.\]

Using the previous formulas, Proposition \ref{LandM:Proposition:ContractionNorm}
and usual results about integration in Banach spaces (see \cite{MarsdenRatiu}
for the theory of Cauchy-Bochner integration on Banach spaces) we have
\begin{align}
   \|D^j&g(F^m(x))\|_\Gamma\nonumber\\&\leq %\frac{1}{(r_0-j-1)!}\left\|\int_0^1(1-t)^{r_0-j-1}D^{r_0}g(tF^m(x))\big(F^m(x)\big)^{\otimes (r_0-j)}\dd %t\right\|_\Gamma\nonumber\\ &\leq 
\frac{1}{(r_0-j-1)!}\int_0^1(1-t)^{r_0-j-1}\|D^{r_0}g(tF^m(x))\|_\Gamma\|F^m(x)\|^{ r_0-j}\dd t\nonumber\\ &\leq\frac{1}{(r_0-j)!}\|g\|_{C^r_\Gamma}\|F^m(x)\|^{r_0-j}, \qquad 0\leq j\leq r_0-1\nonumber
\end{align}
and 
\[
\|D^jg(F^m(x))\|_\Gamma\leq \|g\|_{C^r_\Gamma},\qquad r_0\leq j\leq r.
\]

As a consequence of the two previous bounds, we can write the more compact form
\[
\|D^jg(F^m(x))\|_\Gamma\leq
\|g\|_{C^r_\Gamma}\|F^m(x)\|^{(r_0-j)_+},\qquad 0\leq j\leq r,
\] where $(t)_+=\max(t,0)$.

Then, using Proposition \ref{LandM:Proposition:LGAlgebraPropertyMultilinear}
\begin{align*}
\|D^p(g&\circ F^m)(x) \|_\Gamma \\  \leq 
& \|g\|_{C^r_\Gamma}\|F^m(x)\|^{(r_0-p)_+}\|DF^m(x)\|_\Gamma\|DF^m(x)\|^{p-1} \\ &+\sum_{j=1}^{p-1}\sum_{\substack{i_1,\ldots,i_j\geq1\\i_1+\ldots+i_j=p}}
  C\|g\|_{C^r_\Gamma}\|F^m(x)\|^{(r_0-j)_+}\|D^{i_1}F^m(x)\|_\Gamma
  \cdots\|D^{i_j}F^m(x)\|_\Gamma.
\end{align*}
By the rescaling, if $x\in B(0,1)$,
\begin{align*}
  \|F^m(x)\|&\leq \|M^mx\|+\mathcal{O}(\delta)\leq \|M^m\|+\mathcal{O}(\delta),\\
  \|DF^m(x)\|&\leq \|M^m\|+\mathcal{O}(\delta),\\
  \|DF^m(x)\|_\Gamma&\leq \|M^m\|_\Gamma+\mathcal{O}(\delta)
\end{align*}
and
\[\|D^j F^m(x)\|_\Gamma=\mathcal{O}(\delta),\qquad j\geq 2.\]
 Also note
that for $p\geq 0$, we have $(r_0-p)_++p\geq r_0$.

Then
\[\|D^p(g\circ F^m)(x)\|_\Gamma\leq \|g\|_{C^r_\Gamma}\left[\big(\|M^m\|_\Gamma+\mathcal{O}(\delta)\big)\big(\|M^{m}\|+\mathcal{O}(\delta)\big)^{r_0-1}+\mathcal{O}(\delta)\right],\]
for $1\leq p\leq r$, and finally,
\begin{align}
   \|\mathcal{G}_m(g)\|_{C^r_\Gamma}&\leq \|M^{-m}\|_\Gamma\left[\left(\|M^m\|_\Gamma+\mathcal{O}(\delta)\right)\left(\|M^{m}\|+\mathcal{O}(\delta)\right)^{r_0-1}+\mathcal{O}(\delta)\right]\|g\|_{C^r_\Gamma}\nonumber\\
   &\leq \Gamma(0)^{-1}(\alpha^{-1}+\varepsilon_1)^m\nonumber\\ &\phantom{\leq\ }\times\left[\left(\Gamma(0)^{-1}(\beta+\varepsilon_1)^m+\mathcal{O}(\delta)\right)\left((\beta+\varepsilon_1)^m+\mathcal{O}(\delta)\right)^{r_0-1}+\mathcal{O}(\delta)\right]\|g\|_{C^r_\Gamma}.\nonumber
\end{align}

Then if $\delta$ is small enough, by
\eqref{Sternberg:Lemma:OperatorWD:Equation:Epsilons} the factor in
front of $\|g\|_{C^r_\Gamma}$ is strictly less than 1 and hence
$\mathcal{G}$ is a contraction in $\chi^{r, r_0}_\Gamma$.

\end{proof}

Now we use the normal form theory in the previous section to find a
decay map which linearises our map $F$ up to order $r_0$. The form of
$A$ implies that $\spec (A)=\{\lambda_1, \ldots, \lambda_n\}$ and $\spec
(A^m)=\{\lambda_1^m,\ldots, \lambda_n^m\}$. Since $A^m$ is uncoupled,
$\spec_\Gamma(A^m)=\spec(A^m)$. Moreover the non-resonance condition
\ref{Sternberg:Theorem:Sternberg:H2} implies that
\[\lambda_i^m\neq \lambda^{mk_1}_1\cdots \lambda_n^{mk_n}, \qquad
k\in(\mathbb{Z}^+)^n,\quad 2\leq |k| \leq r_0, \]
and therefore 
\[(\spec_\Gamma(A^m))^j\cap\spec_\Gamma(A^m)=\emptyset, \qquad
j\geq2.\]

Taking $\|B\|_\Gamma$ sufficiently small, since $\spec_\Gamma$ is upper
semicontinuous by Proposition \ref{Preliminaries:Operational:Prop:Continuity}, 
we have
\[(\spec_\Gamma (M^m))^j\cap \spec_\Gamma (M^m)=\emptyset, \qquad 2\leq j\leq
  r_0,\] because we are only dealing with a finite set of conditions.

Hence Corollary \ref{Sternberg:NormalForms:Corollary} gives us that
there exists a polynomial $K\in
C^\infty_\Gamma(\ell^\infty(\mathbb{R}^n), \ell^\infty(\mathbb{R}^n))$
of degree (at most) $r_0$ such that $K(0)=0$, $DK(0)=\id$ and
\[
F^m\circ K(x)-K\circ M^m(x)=o(\|x\|^{r_0}).
\]
Let $S_0=K^{-1}$ be the local inverse. Taking the rescaling parameter $\delta$
smaller if necessary we can assume that $S_0$ is defined in
$B(0,1)\subset\ell^\infty(\mathbb{R}^n)$. By Theorem
\ref{DiffFunDecay:InverseFunctionTheorem}, we have $S_0\in C^r_\Gamma$ and satisfies
\begin{align}
  S_0(0)=0,\qquad DS_0(0)=\id,\nonumber\\
M^{-m}\circ S_0\circ F^m-S_0=o(\|x\|^{r_0}).\nonumber
\end{align}
Starting with this approximate conjugation we define the sequence
\begin{equation}
  S_n=M^{-m}S_{n-1}\circ F^m=\mathcal{G}_m(S_{n-1}),\qquad n\geq1.\nonumber
\end{equation}

The next lemma proves that $S_n$ converges to a well-defined conjugation in the space $C^r_\Gamma$.

\begin{lemma}\label{Sternberg:Lemma:RmGammaConverges}
  The sequence $\{S_n\}_{n\in\mathbb{N}}$ defined above converges to
  a function $S\in C^r_\Gamma(B(0,1),\ell^\infty(\mathbb{R}^n))$ satisfying
  $S(0)=0$, $DS(0)=\id$ and
  \[S \circ F^m= M^m S.\]
\end{lemma}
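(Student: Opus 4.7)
The central observation is that $\mathcal{G}_m$ is a \emph{linear} operator on $C^r_\Gamma$, but Lemma \ref{Sternberg:Lemma:OperatorWD} provides contraction only on the closed subspace $\chi^{r,r_0}_\Gamma\subset C^r_\Gamma$ of functions with vanishing jet of order $r_0$ at the origin. Since $S_0$ itself is not in $\chi^{r,r_0}_\Gamma$ (because $DS_0(0)=\id$), one cannot directly apply a Banach fixed point argument to the iteration $\{S_n\}$. Instead I would pass to the telescoping differences $\Delta_n:=S_n-S_{n-1}$ for $n\geq 1$, and exploit linearity of $\mathcal{G}_m$ to obtain the recurrence $\Delta_{n+1}=\mathcal{G}_m(\Delta_n)$.

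The first step is to verify that $\Delta_1=M^{-m}S_0\circ F^m-S_0$ lies in $\chi^{r,r_0}_\Gamma$. That $\Delta_1\in C^r_\Gamma$ is immediate from the composition proposition together with Remark \ref{DiffFunDecay:Remark:CompositionLinearMap}, since $S_0,F\in C^r_\Gamma$ and $M^{-m}\in L_\Gamma$. The vanishing of $D^j\Delta_1(0)$ for $0\leq j\leq r_0$ is a direct consequence of the identity $M^{-m}\circ S_0\circ F^m-S_0=o(\|x\|^{r_0})$ established just before the statement, combined with the standard Taylor-theorem fact that a $C^{r_0}$ function which is $o(\|x\|^{r_0})$ at the origin has identically vanishing Taylor polynomial of order $r_0$, hence zero derivatives up to that order.

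Then the contraction estimate of Lemma \ref{Sternberg:Lemma:OperatorWD}, with some factor $\kappa<1$, yields geometric decay $\|\Delta_n\|_{C^r_\Gamma}\leq \kappa^{n-1}\|\Delta_1\|_{C^r_\Gamma}$, so the telescoping series $S_0+\sum_{n\geq 1}\Delta_n$ converges absolutely in the Banach space $C^r_\Gamma$ to a limit $S=\lim_n S_n\in C^r_\Gamma$. Because every $\Delta_k$ lives in $\chi^{r,r_0}_\Gamma$ and therefore satisfies $\Delta_k(0)=0$ and $D\Delta_k(0)=0$, the normalisations $S(0)=0$ and $DS(0)=\id$ are inherited from $S_0$.

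The functional equation is then obtained by passing to the limit in $S_n=M^{-m}S_{n-1}\circ F^m$: for fixed $F$, the map $g\mapsto g\circ F^m$ is linear and bounded from $C^r_\Gamma$ to itself by the composition proposition, and left multiplication by $M^{-m}\in L_\Gamma$ is continuous in the $\|\cdot\|_{C^r_\Gamma}$ norm by Remark \ref{DiffFunDecay:Remark:CompositionLinearMap}, so $S=M^{-m}S\circ F^m$, equivalently $S\circ F^m=M^m S$ on the ball where the iteration is defined. The only step requiring any real care is confirming that $\Delta_1$ has the full vanishing jet of order $r_0$ at $0$, which is needed to enter the contractive subspace $\chi^{r,r_0}_\Gamma$; everything else is a routine telescoping-series argument powered by the contraction estimate of the previous lemma.
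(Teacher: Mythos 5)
Your proposal is correct and is essentially the paper's own argument: the paper likewise writes $S_n=S_0+\sum_{j=0}^{n-1}\mathcal{G}^j(M^{-m}S_0\circ F^m-S_0)$, which is exactly your telescoping series with $\Delta_{j+1}=\mathcal{G}^j(\Delta_1)$, and likewise hinges on the observation that $\Delta_1=M^{-m}S_0\circ F^m-S_0$ lies in $\chi^{r,r_0}_\Gamma$ so that the contraction of Lemma \ref{Sternberg:Lemma:OperatorWD} applies. The only cosmetic difference is that the paper verifies the conjugacy and the normalisations $S(0)=0$, $DS(0)=\id$ by passing to the limit in the explicit formula $S_n=M^{-mn}S_0\circ F^{mn}$ rather than by continuity of $g\mapsto M^{-m}g\circ F^m$ and membership of the $\Delta_k$ in $\chi^{r,r_0}_\Gamma$, but both routes are equally valid.
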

\begin{proof}
  Since $m$ is fixed we will drop the dependence of $\mathcal{G}_m$
  on $m$. First, we prove the following relation
  \begin{align}
    S_n=S_0+\sum_{j=0}^{n-1}\mathcal{G}^j(M^{-m}S_0\circ F^m-S_0),\label{SternbergContractions:Lemma:RmGammaConverges:Eq1}
  \end{align}
  where $\mathcal{G}^0=\id$ and $\mathcal{G}^j=\mathcal{G}\circ
  \mathcal{G}^{j-1},\, j\geq 1$.  Observe that $M^{-m}S_0\circ
  F^m-S_0\in \chi^{r, r_0}$, since $S_0$ solves the conjugation
  equation formally up to order $r_0$. Moreover $M^{-m}S_0\circ
  F^m-S_0\in C^r_\Gamma$ since $M\in L_\Gamma$ and $S_0, F\in C^r_\Gamma$. We 
  prove \eqref{SternbergContractions:Lemma:RmGammaConverges:Eq1} by
  induction. When $n=1$, we use the definition
  $S_1=\mathcal{G}(S_0)$:
  \begin{align}
    S_1=\mathcal{G}(S_0)=M^{-m}S_0\circ
    F^m=S_0+\mathcal{G}^0(M^{-m}S_0\circ F^m-S_0). \nonumber
  \end{align}
  Now assume Equation \eqref{SternbergContractions:Lemma:RmGammaConverges:Eq1}
  is true up to index $n$, then
  \begin{align}
    S_{n+1}&=M^{m(-n-1)}S_0\circ F^{m(n+1)}\nonumber\\
    &=M^{-mn}S_0\circ F^{mn}+M^{m(-n-1)}S_0\circ
    F^{m(n+1)}-M^{-mn}S_0\circ F^{mn}\nonumber\\
    &=S_n+M^{-mn}\left(M^{-m}S_0\circ F^{m}-S_0\right)\circ
    F^{mn}\nonumber\\
    &=S_n+\mathcal{G}^n(M^{-m}S_0\circ F^m-S_0)\nonumber\\
    &=S_0+\sum_{j=0}^{n}\mathcal{G}^j\big(M^{-m}S_0\circ
    F^m-S_0\big)\nonumber.
  \end{align}

  By Lemma \ref{Sternberg:Lemma:OperatorWD}, $\mathcal{G}$ is a contraction in
  $\chi^{r, r_0}_\Gamma$ and therefore the series arising from
  \eqref{SternbergContractions:Lemma:RmGammaConverges:Eq1} converges and
  $\lim_{n\to\infty} S_n$ exists and belongs to $\chi^{r, r_0}_\Gamma$.

  Finally, we check the conjugacy property. Indeed,
  \begin{align*}
    S\circ F^m& =\lim_{n\to\infty}S_n\circ F^m=\lim_{n\to\infty}
    M^{-mn}S_0\circ F^{mn+m}\\ &=\lim_{n\to\infty}M^mM^{-mn-m}S_0F^{mn+m}=M^mS.
  \end{align*}
Also
  \begin{align}
    S(0)=\lim_{n\to\infty}M^{-mn}S_0\circ F^{mn}(0)=0,\nonumber
  \end{align}
  and since $DF^{mn}(0)=DF(F^{mn-1}(0))\cdots DF(0)=M^{mn}$ and
  $DS_0(0)=\id$,
  \begin{align}
    DS(0)=\lim_{n\to\infty}M^{-mn}DS_0(F^{mn}(0))DF^{mn}(0)=\lim_{n\to\infty}M^{-mn}\id
    M^{mn}=\id.\nonumber
  \end{align}
\end{proof}

Thus, $S$ conjugates $F^m$ to $M^m$. The final step is to show that $S$
also conjugates $F$ to $M$.

By the spectral properties and Corollary
\ref{Sternberg:NormalForms:Corollary}, there exists a polynomial
$\wt{K}\in C^\infty(\ell^\infty(\mathbb{R}^n), \ell^\infty(\mathbb{R}^n))$ such that
\[\wt{K}(0)=0, \qquad D\wt{K}(0)=\id,\]
and
\[F\circ\wt{K}(x)-\wt{K}\circ M(x)=o(\|x\|^{r_0}).\]

Let $R_0=\wt{K}^{-1}$, which similarly to $S_0$, we can assume is defined in
$B(0,1)\subset \ell^\infty(\mathbb{R}^n)$. Thus
\[M^{-1}R_0\circ F(x)=R_0(x)+o(\|x\|^{r_0})\]
and as a consequence
\begin{equation}
   M^{-m}R_0\circ F^m(x)=R_0(x)+o(\|x\|^{r_0}).\label{Sternberg:Theorem:Unicity:Eq1}
\end{equation}

\begin{lemma}
  Under the hypotheses of Theorem \ref{Sternberg:Theorem:Sternberg},
  if $\|B\|$ and the rescaling parameter $\delta$ are small enough, the operator
  \[\wt{\mathcal{G}}:C^r(B(0,1),\ell^\infty(\mathbb{R}^n))\to
  C^r(B(0,1),\ell^\infty(\mathbb{R}^n))\] defined by
\[\wt{\mathcal{G}}(g)=M^{-1}g\circ F\]
is well defined and is a contraction in the $C^r$-norm.
\end{lemma}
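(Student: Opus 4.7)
The plan is to carry out the same estimates as in Lemma \ref{Sternberg:Lemma:OperatorWD}, now specialised to $m=1$ and with the $C^r_\Gamma$-norm replaced throughout by the ordinary $C^r$-norm. The $\Gamma$-decay bookkeeping then drops out, so only the pointwise Lipschitz data of $F$ and $M$ matter, and the contraction factor gets cleaner.

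\textbf{Parameter setup.} First fix an adapted norm on $\mathbb{R}^n$ so that $\|\mathfrak{a}\| < \beta + \varepsilon/2$ and $\|\mathfrak{a}^{-1}\| < \alpha^{-1} + \varepsilon/2$ for a small $\varepsilon > 0$ to be chosen. Since $r_0 = [\nu] + 1 > \nu = \log\alpha/\log\beta$, we have $\beta^{r_0} < \alpha$, so we may select $\varepsilon > 0$ with $(\alpha^{-1} + \varepsilon)(\beta + \varepsilon)^{r_0} < 1$. Taking $\|B\|$ small, Proposition \ref{LandM:Lemma:DInvertibleEasyCase} yields $\|M\| < \beta + \varepsilon$ and $\|M^{-1}\| < \alpha^{-1} + \varepsilon$.

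\textbf{Well-definedness.} After rescaling by $\delta$ sufficiently small, $F$ strictly contracts $B(0,1)$ into itself, so $g \circ F$ lies in $C^r(B(0,1), \ell^\infty(\mathbb{R}^n))$ whenever $g$ does; postcomposition with the bounded linear map $M^{-1}$ keeps the image in $C^r$.

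\textbf{Contraction estimate.} Apply the Fa\`a di Bruno formula to $D^p\wt{\mathcal{G}}(g)(x) = M^{-1} D^p(g\circ F)(x)$ for each $1 \le p \le r$. The leading term is $M^{-1} D^p g(F(x))(DF(x))^{\otimes p}$, and every other term of the Fa\`a di Bruno sum carries at least one factor $D^i F(x)$ with $i \ge 2$, which is of size $\mathcal{O}(\delta)$ by the rescaling. Using the Taylor integral-remainder bound
\[
\|D^j g(F(x))\| \le \|g\|_{C^r}\,\|F(x)\|^{(r_0-j)_+},
\]
the pointwise estimates $\|F(x)\|, \|DF(x)\| \le \|M\| + \mathcal{O}(\delta)$, and the exponent identity $p + (r_0 - p)_+ \ge r_0$, one obtains
\[
\|D^p\wt{\mathcal{G}}(g)(x)\| \le \|M^{-1}\|\bigl[(\|M\| + \mathcal{O}(\delta))^{r_0} + \mathcal{O}(\delta)\bigr]\|g\|_{C^r}.
\]
Combining with $\|\wt{\mathcal{G}}(g)\|_{C^0} \le \|M^{-1}\|\|g\|_{C^0}$ and taking $\delta$ small enough that the $\mathcal{O}(\delta)$ corrections do not spoil the inequality $(\alpha^{-1}+\varepsilon)(\beta+\varepsilon)^{r_0} < 1$, one gets a contraction constant strictly less than $1$ in the $C^r$-norm.

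\textbf{Main obstacle.} The only delicate point is the combinatorial Fa\`a di Bruno bookkeeping and verifying that the exponent identity $p + (r_0 - p)_+ \ge r_0$ indeed produces exactly the right number of factors of $\|M\|$ so that the strict dominance $\beta^{r_0} < \alpha$ gets harnessed; this is handled just as in Lemma \ref{Sternberg:Lemma:OperatorWD} with $m$ specialised to $1$, and is simpler here because no $\Gamma(0)^{-1}$ factors appear.
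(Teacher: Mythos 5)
Your proposal takes exactly the route the paper intends: the paper omits this proof, remarking only that it is ``completely analogous to the proof of Lemma \ref{Sternberg:Lemma:OperatorWD} but the estimates are much simpler, since they do not involve decay functions'', and your observation that $m=1$ suffices here is correct and worth making explicit --- in Lemma \ref{Sternberg:Lemma:OperatorWD} the iterate $m$ had to be taken large precisely to beat the factor $\Gamma(0)^{-2}\geq 1$ in $\Gamma(0)^{-2}(\alpha^{-1}\beta^{r_0})^m<1$, whereas in the plain operator norm the adapted norm already gives $\|M^{-1}\|\,\|M\|^{r_0}\leq(\alpha^{-1}+\varepsilon)(\beta+\varepsilon)^{r_0}<1$ with $m=1$.

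Two points in your write-up need repair, one of which is a genuine (if easily fixed) gap. First, your concluding combination step fails as written: the bound $\|\wt{\mathcal{G}}(g)\|_{C^0}\leq\|M^{-1}\|\,\|g\|_{C^0}$ carries the factor $\|M^{-1}\|\approx\alpha^{-1}>1$, so it does not yield contraction in the $C^0$-component of the $C^r$-norm. You must instead apply your own Taylor-remainder bound at order zero,
\begin{equation}
\|\wt{\mathcal{G}}(g)(x)\|=\|M^{-1}g(F(x))\|\leq\|M^{-1}\|\,\|g\|_{C^r}\,\|F(x)\|^{r_0}\leq\|M^{-1}\|\big(\|M\|+\mathcal{O}(\delta)\big)^{r_0}\|g\|_{C^r},\nonumber
\end{equation}
which is dominated by the same contraction constant as the derivative terms. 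Second, and relatedly, that Taylor bound $\|D^jg(F(x))\|\leq\|g\|_{C^r}\|F(x)\|^{(r_0-j)_+}$ requires $D^jg(0)=0$ for $0\leq j\leq r_0$; on all of $C^r(B(0,1),\ell^\infty(\mathbb{R}^n))$ the operator is \emph{not} a contraction (take $g$ a nonzero constant, for which $\|\wt{\mathcal{G}}(g)\|_{C^0}$ can be of order $\alpha^{-1}\|g\|_{C^0}$). The contraction property holds on the closed subspace $\chi^{r,r_0}$ of maps with vanishing jet up to order $r_0$ at the origin, which is where the paper actually applies it (to $M^{-1}R_0\circ F-R_0$ and its iterates under $\wt{\mathcal{G}}$, mirroring Lemma \ref{Sternberg:Lemma:OperatorWD} on $\chi^{r,r_0}_\Gamma$); your proof should state this restriction explicitly rather than leave it implicit in the use of the Taylor bound. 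A last cosmetic remark: you invoke Proposition \ref{LandM:Lemma:DInvertibleEasyCase}, which is phrased for $\Gamma$-norms, to bound $\|M^{-1}\|$; the identical Neumann-series argument in the plain operator norm is what you actually need, and is immediate.
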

We omit the proof of this lemma since it is completely analogous to the proof of Lemma
\ref{Sternberg:Lemma:OperatorWD} but the estimates are much simpler,
since they do not involve decay functions.

We define the sequence 
\[ 
R_n=M^{-1}R_{n-1}\circ F, \qquad n\geq 1.
\] 
The same arguments as the
ones used in the proof of Lemma \ref{Sternberg:Lemma:RmGammaConverges}
but now in the space $C^r$ instead of $C^r_\Gamma$ give that there
exists $R=\lim_{n\to\infty}R_n$ with  $R\in
C^r(B(0,1), \ell^\infty(\mathbb{R}^n))$ such that
$R\circ F=MR$.

Now consider the iteration $S_n$ introduced in the proof of Lemma
\ref{Sternberg:Lemma:RmGammaConverges} with $S_0=R_0\in
C^r_\Gamma$. Since $R_0$ satisfies
\eqref{Sternberg:Theorem:Unicity:Eq1}, we see that $S_n$ is a
subsequence of $R_n$, being both sequences convergent in the larger space
$C^r$. Then \[R=\lim_{n\to\infty}R_n=\lim_{n\to\infty}S_n=S\in
C^r_\Gamma\] and therefore $S$ also conjugates $F$ with $M$, proving Theorem
\ref{Sternberg:Theorem:Sternberg}.

An improvement of Theorem \ref{Sternberg:Theorem:Sternberg} consists of
not assuming the non-resonance condition
\ref{Sternberg:Theorem:Sternberg:H2}. In such case we obtain a
$C^r_\Gamma$ local conjugation to a normal form of $F$ instead to a
conjugation to its linear part.

\begin{theorem}\label{Sternberg:Theorem:SternbergNR}
  Under the conditions and notation of Theorem
  \ref{Sternberg:Theorem:Sternberg} except hypothesis
  \ref{Sternberg:Theorem:Sternberg:H2}, if $F\in
  C^r_\Gamma(\ell^\infty(\mathbb{R}^n), \ell^\infty(\mathbb{R}^n))$
  with $r\geq r_0$ and $\|B\|_\Gamma$ is small enough there exists a
  polynomial $H\in C^\infty_\Gamma(\ell^\infty(\mathbb{R}^n),
  \ell^\infty(\mathbb{R}^n))$ of degree not larger than $r_0$ and
  $R\in C^r_\Gamma(\ell^\infty(\mathbb{R}^n),
  \ell^\infty(\mathbb{R}^n))$ such that
  \[R(0)=0,\qquad DR(0)=\id\]
  and 
  \[R\circ F=H\circ R\] in some neighborhood $U_1\subset U$ of 0.
\end{theorem}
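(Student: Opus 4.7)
The plan is to follow the structure of the proof of Theorem~\ref{Sternberg:Theorem:Sternberg}, replacing the linear target $M$ by the polynomial normal form $H$ produced by Theorem~\ref{Sternberg:NormalForms:Theorem}. After the same rescaling of $F$, the normal form theorem, now applied without invoking hypothesis~\ref{Sternberg:Theorem:Sternberg:H2}, yields polynomials $K, H \in C^\infty_\Gamma$ of degree at most $r_0$, with $K(0)=0$, $DK(0)=\id$, $DH(0)=M$, and
\[
F\circ K - K\circ H = o(\|x\|^{r_0}).
\]
Since $\id$ and $M=A+B$ are invertible with $\Gamma$-bounded inverses (the latter by Proposition~\ref{LandM:Lemma:DInvertibleEasyCase}, using that $A$ is uncoupled and $\|B\|_\Gamma$ is small), Theorem~\ref{DiffFunDecay:InverseFunctionTheorem} provides local $C^r_\Gamma$ inverses $R_0:=K^{-1}$ and $H^{-m}$ for every $m\geq 1$; shrinking $\delta$ we can assume they are defined on $B(0,1)$. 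A direct computation using $F(0)=0$, $R_0(0)=0$, $DR_0(0)=\id$, $DH(0)=M$ shows that $R_0$ is an approximate fixed point of the (now nonlinear) operator
\[
\mathcal{G}_m(g):=H^{-m}\circ g\circ F^m,
\]
in the sense that $\mathcal{G}_m(R_0)-R_0=o(\|x\|^{r_0})$ and therefore belongs to $\chi^{r,r_0}_\Gamma$.

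Next I would fix $m$ as in Lemma~\ref{Sternberg:Lemma:OperatorWD}, so that the linear contraction factor $\Gamma(0)^{-1}(\alpha^{-1}+\varepsilon_1)^m\bigl[\Gamma(0)^{-1}(\beta+\varepsilon_1)^{m r_0}+\varepsilon_3\bigr]$ is strictly below $1$, and show that $\mathcal{G}_m$ is a contraction on a small $C^r_\Gamma$-ball around $R_0$ inside $R_0+\chi^{r,r_0}_\Gamma$. Invariance of this affine ball under $\mathcal{G}_m$ follows from the fact that $g-R_0=o(\|x\|^{r_0})$ forces $(g-R_0)\circ F^m=o(\|x\|^{r_0})$, and then composing with $H^{-m}$ preserves this decay. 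The main novelty with respect to Lemma~\ref{Sternberg:Lemma:OperatorWD} is that $H^{-m}$ is nonlinear, so Remark~\ref{DiffFunDecay:Remark:CompositionLinearMap} no longer applies directly. The key idea is to split
\[
H^{-m}(y)=M^{-m}y+P(y),
\]
with $P\in C^\infty_\Gamma$, $P(0)=0$, $DP(0)=0$, whose Taylor coefficients become $\mathcal{O}(\delta)$ after the rescaling. This yields
\[
\mathcal{G}_m(g_1)-\mathcal{G}_m(g_2)=M^{-m}\circ(g_1-g_2)\circ F^m+(P\circ g_1-P\circ g_2)\circ F^m.
\]
The first term is controlled by the very estimate already established in the proof of Lemma~\ref{Sternberg:Lemma:OperatorWD} applied to $g_1-g_2\in\chi^{r,r_0}_\Gamma$, and contracts with a factor $<1$; the second one, via the mean value theorem, the composition proposition for $C^r_\Gamma$ maps, and $\|DP\|_{C^{r-1}_\Gamma}=\mathcal{O}(\delta)$, is bounded by $\mathcal{O}(\delta)\|g_1-g_2\|_{C^r_\Gamma}$ on the ball. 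For $\delta$ and $\|B\|_\Gamma$ small enough the sum is strictly contracting; iterating $R_n:=\mathcal{G}_m(R_{n-1})$ produces a fixed point $R\in C^r_\Gamma$, with $R(0)=0$ and $DR(0)=\id$ since both are preserved by every iterate, satisfying $R\circ F^m=H^m\circ R$.

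To upgrade this identity to $R\circ F=H\circ R$ I follow the final step of the proof of Theorem~\ref{Sternberg:Theorem:Sternberg}: the operator $\wt{\mathcal{G}}(g)=H^{-1}\circ g\circ F$ is a contraction on a ball of $C^r(B(0,1),\ell^\infty(\mathbb{R}^n))$ by analogous but simpler estimates (no decay is involved), so its iteration starting from $R_0$ converges in $C^r$ to a map $\wt R$ with $\wt R\circ F=H\circ\wt R$. Because the iterates of $\mathcal{G}_m$ form a subsequence of those of $\wt{\mathcal{G}}$, both starting from $R_0$, uniqueness of limits in $C^r$ forces $R=\wt R$, and consequently $R\circ F=H\circ R$. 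The main obstacle is the contraction step: replacing the linear map $M^{-m}$ by the nonlinear $H^{-m}$ turns the clean global linear estimate of Lemma~\ref{Sternberg:Lemma:OperatorWD} into a Banach fixed point statement on a ball, and one has to exploit the rescaling-induced smallness of the nonlinear part of $H^{-m}$, together with the composition calculus of $C^r_\Gamma$, to absorb the new error terms while keeping the overall Lipschitz constant below one.
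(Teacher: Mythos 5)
Your proposal is correct and follows essentially the same route as the paper, which likewise rescales, forms the nonlinear operator $\overline{\mathcal{G}}_m(g)=H^{-m}\circ g\circ F^m$ on $\chi^{r,r_0}_\Gamma$, and argues that after rescaling $H^{-1}$ is $C^r_\Gamma$-close to $M^{-1}$ so that $\Lip\overline{\mathcal{G}}_m<1$, the rest being as in Theorem~\ref{Sternberg:Theorem:Sternberg}. Your splitting $H^{-m}=M^{-m}+P$ with $\|P\|=\mathcal{O}(\delta)$ simply makes explicit the closeness estimate the paper only sketches.
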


\begin{proof}
  We will only comment on the differences of this proof with the proof
  of Theorem \ref{Sternberg:Theorem:Sternberg}. We rescale the map, we consider
  the spaces $\chi^{r, r_0}$ and $\chi^{r,r_0}_\Gamma$ and we use the same
  integer $m$ as in the proof of that theorem. We take a normal form $H$ provided by Theorem
  \ref{Sternberg:NormalForms:Theorem} and define the operator
  \[\overline{\mathcal{G}}_m(g)=H^{-m}\circ g\circ F^m.\]
  Now the estimates on $\overline{\mathcal{G}}_m$ become more involved
  because in this case $H$ is not linear. This implies that
  $\overline{\mathcal{G}}_m$ is not linear anymore. However, because
  of the rescaling, $H^{-1}$ is very close to $M^{-1}$ in
  $C^r_\Gamma$ (and $C^r$) norm, a fact which gives similar estimates
  and thus proves $\Lip \overline{\mathcal{G}}_m<1$. The remaining
  part of the proof is analogous.

\end{proof}

The previous theorems assume that the linear part of the maps is close
to an uncoupled map with identical dynamics on each node. This
gives sufficient conditions for the conjugation in terms of the
eigenvalues of the projections to the nodes.

Theorem \ref{Sternberg:Theorem:SternbergSpectral} requires instead conditions on
the $\Gamma$-spectrum of the linear part of the map.

Note that, since $\spec_\Gamma(A)$ is compact, Hypothesis
\ref{Sternberg:Theorem:SternbergSpectral:H1} in Theorem
\ref{Sternberg:Theorem:SternbergSpectral} implies that
$0<\alpha_{\Gamma}\leq\beta_{\Gamma}<1$ and $r_0<\infty$.

\begin{proof}[Proof of Theorem \ref{Sternberg:Theorem:SternbergSpectral}]

  The structure of the proof is very similar to the one of the proof of Theorem
  \ref{Sternberg:Theorem:Sternberg} but it has some technical
  differences. Let $r_0$ and $r$ be as in the statement of the
  theorem. Note that $\beta_{\Gamma}=r_\Gamma(A)$ and
  $\alpha_{\Gamma}^{-1}=r_\Gamma(A^{-1})$. Since $r_0>\nu$ then
  $\alpha_{\Gamma}^{-1}\beta_{\Gamma}^{r_0}<1$ and there exists $\varepsilon_1>0$ such
  that
\[(\alpha_{\Gamma}^{-1}+\varepsilon_1)(\beta_{\Gamma}+\varepsilon_1)^{r_0}<1.\]

In any Banach algebra
\begin{equation}
  r_\Gamma(A)=\lim_{n\to\infty}\left(\|A^n\|_\Gamma\right)^{1/n}=\inf_{n\geq1}\left(\|A^n\|_\Gamma\right)^{\frac{1}{n}},
\end{equation}
thus there exists $m\in \mathbb{N}$ such that
\[\|A^n\|_\Gamma\leq\left(r_\Gamma(A)+\varepsilon_1\right)^n,\qquad
n\geq m,\] and
\[\|A^{-n}\|_\Gamma\leq(r_\Gamma(A^{-1})+\varepsilon_1)^n,\qquad n\geq m.\]
Obviously,
\[(\alpha_{\Gamma}^{-1}+\varepsilon_1)^m(\beta_{\Gamma}+\varepsilon_1)^{mr_0}<1\] and
there exists $\varepsilon_2, \varepsilon_3>0$ such that
\[(\alpha_{\Gamma}^{-1}+\varepsilon_1)^m\left[\left((\beta_{\Gamma}+\varepsilon_1)^m+\varepsilon_2\right)^{r_0}+\varepsilon_3\right]<1.\]

Now we introduce the operator $\mathcal{G}_m:\chi^{r,r_0}_\Gamma\to
\chi_\Gamma^{r,r_0}$ defined by
\[\mathcal{G}_m(g)=A^{-m}g\circ F^m.\]

Analogous estimates as in Lemma \ref{Sternberg:Lemma:OperatorWD} yield
that if the rescaling parameter is small enough,
$\mathcal{G}_m$ is well defined in $\chi_\Gamma^{r,r_0}$ and is a
contraction. Then the proof follows the same lines as the proof of
Theorem \ref{Sternberg:Theorem:Sternberg}.

For the uniqueness arguments needed at the end of the proof, we
consider the operator $\wt{\mathcal{G}}(g)=A^{-1}g\circ F$ in
$C^r(B(0,1), \ell^\infty(\mathbb{R}^n))$. Since $\spec(A)\subset
\spec_\Gamma(A)$, condition
\ref{Sternberg:Theorem:SternbergSpectral:H2} implies that there are
also no resonances among the elements of $\spec(A)$ and that
$r(A)<\beta_{\Gamma}$ and $r(A^{-1})<\alpha_{\Gamma}^{-1}$. Hence we can find a norm in
the space $\ell^\infty(\mathbb{R}^n)$, equivalent to the original one, such that
\begin{equation}
   \|A^{-1}\|\|A\|^{r_0}<1,\label{Sternberg:Theorem:SternbergSpectral:Cr:Eq1}
\end{equation}
where in the previous expression $\|\cdot\|$ stands for the corresponding
operator norm. The bound
\eqref{Sternberg:Theorem:SternbergSpectral:Cr:Eq1} allows us to prove
the estimates needed to show that $\wt{\mathcal{G}}$ is a
contraction. With these ingredients we can finish the proof in this
setting in the same way as in Theorem \ref{Sternberg:Theorem:Sternberg}.

\end{proof}

The analogous version of Theorem \ref{Sternberg:Theorem:SternbergNR}
in this setting is the following.

\begin{theorem}\label{Sternberg:Theorem:SternbergSpectralNR}
  Under the conditions of Theorem
  \ref{Sternberg:Theorem:SternbergSpectral}, except condition
  \ref{Sternberg:Theorem:SternbergSpectral:H2}, if $F\in
  C^r_\Gamma(\ell^\infty(\mathbb{R}^n), \ell^\infty(\mathbb{R}^n))$
  with $r\geq r_0$ there exists a polynomial $H\in
  C^\infty_\Gamma(\ell^\infty(\mathbb{R}^n),
  \ell^\infty(\mathbb{R}^n))$ of degree not larger than $r_0$ and
  $R\in C^r_\Gamma(\ell^\infty(\mathbb{R}^n),
  \ell^\infty(\mathbb{R}^n))$ such that
  \[R(0)=0,\quad DR(0)=\id\]
  and 
  \[R\circ F=H\circ R\] in some neighborhood $U_1\subset U$ of 0.

\end{theorem}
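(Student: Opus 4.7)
The plan is to follow the structure of the proof of Theorem \ref{Sternberg:Theorem:SternbergNR}, but using the spectral machinery from the proof of Theorem \ref{Sternberg:Theorem:SternbergSpectral} in place of the perturbative estimates based on the eigenvalues of $\mathfrak{a}$. First I would rescale $F$ by $T_\delta$ as before, so that the nonlinear part of $F$ becomes small in $C^r_\Gamma$ on $B(0,1)$, and then apply Theorem \ref{Sternberg:NormalForms:Theorem} to produce a polynomial normal form $H = A + \sum_{j\in J} H_j x^{\otimes j}$ of degree at most $r_0$, with each $H_j \in L^j_\Gamma$ and a polynomial $K\in C^\infty_\Gamma$ with $K(0)=0$, $DK(0)=\id$, such that $F\circ K - K\circ H = o(\|x\|^{r_0})$. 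Locally inverting $K$ via Theorem \ref{DiffFunDecay:InverseFunctionTheorem} gives $R_0 = K^{-1}\in C^r_\Gamma$ with $R_0(0)=0$, $DR_0(0)=\id$, satisfying $H\circ R_0\circ F^{-1} - R_0 = o(\|x\|^{r_0})$, equivalently $H^{-1}R_0\circ F(x) - R_0(x) = o(\|x\|^{r_0})$ after possibly shrinking $\delta$ to make $H$ invertible near $0$.

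The core analytical step is to set up the fixed point operator
\[
\overline{\mathcal{G}}_m(g) = H^{-m}\circ g\circ F^m
\]
on the closed subspace $\chi^{r,r_0}_\Gamma$ of $C^r_\Gamma(B(0,1),\ell^\infty(\mathbb{R}^n))$. I would choose the integer $m$ exactly as in the proof of Theorem \ref{Sternberg:Theorem:SternbergSpectral}: since $\nu = \log\alpha_\Gamma/\log\beta_\Gamma$ and $r_0 > \nu$, there exists $\varepsilon_1>0$ with $(\alpha_\Gamma^{-1}+\varepsilon_1)(\beta_\Gamma+\varepsilon_1)^{r_0}<1$, and then $m$ large enough so that $\|A^{\pm m}\|_\Gamma \leq (r_\Gamma(A^{\pm 1})+\varepsilon_1)^m$. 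Since $H = A + (\text{higher order})$ and the rescaling makes the higher order part of $H$ arbitrarily small in $\|\cdot\|_\Gamma$, on $B(0,1)$ we have $DH^{\pm m}$, $H^{\pm m}$ close to $A^{\pm m}$ in the relevant norms, and all higher derivatives $D^j H^{\pm m}$ are $\mathcal{O}(\delta)$ for $j\geq 2$.

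The main obstacle, and the main difference with Theorem \ref{Sternberg:Theorem:SternbergSpectral}, is that $\overline{\mathcal{G}}_m$ is no longer a linear operator because $H$ is not linear, so I need to estimate its Lipschitz constant rather than its norm. Given $g_1,g_2\in\chi^{r,r_0}_\Gamma$, I would expand $H^{-m}\circ g_1\circ F^m - H^{-m}\circ g_2\circ F^m$ by writing $H^{-m}(u) - H^{-m}(v) = \int_0^1 DH^{-m}(v+t(u-v))(u-v)\,dt$ and use the Fa\`a di Bruno formula, combined with Proposition \ref{LandM:Proposition:LGAlgebraPropertyMultilinear} and the Cauchy--Bochner integration results, to bound $\|\overline{\mathcal{G}}_m(g_1) - \overline{\mathcal{G}}_m(g_2)\|_{C^r_\Gamma}$ by a constant times $\|g_1-g_2\|_{C^r_\Gamma}$. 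Thanks to the rescaling, $\|DH^{-m}\|_\Gamma$ is close to $\|A^{-m}\|_\Gamma$ and the higher derivatives of $H^{\pm m}$ contribute $\mathcal{O}(\delta)$ terms, so the leading coefficient becomes
\[
(\alpha_\Gamma^{-1}+\varepsilon_1)^m\bigl[((\beta_\Gamma+\varepsilon_1)^m+\varepsilon_2)^{r_0}+\varepsilon_3\bigr] < 1,
\]
giving $\Lip\overline{\mathcal{G}}_m < 1$ for $\delta$ small enough.

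Once the contraction is established, I would iterate $S_n = \overline{\mathcal{G}}_m(S_{n-1})$ starting from $S_0 = R_0\in\chi^{r,r_0}_\Gamma$ (noting $S_0 - \overline{\mathcal{G}}_m(S_0)\in\chi^{r,r_0}_\Gamma$ since $H\circ R_0 - R_0\circ F = o(\|x\|^{r_0})$ iterated $m$ times still gives a map in $\chi^{r,r_0}_\Gamma$), exactly as in Lemma \ref{Sternberg:Lemma:RmGammaConverges}, obtaining $S = \lim S_n\in C^r_\Gamma$ with $S\circ F^m = H^m\circ S$, $S(0)=0$, $DS(0)=\id$. Finally, to upgrade this conjugacy with $F^m$ to a conjugacy with $F$, I would repeat the uniqueness argument from the end of the proof of Theorem \ref{Sternberg:Theorem:Sternberg}: introduce $\widetilde{\mathcal{G}}(g) = H^{-1}\circ g\circ F$ on $C^r(B(0,1),\ell^\infty(\mathbb{R}^n))$, show it is a contraction using only the ordinary operator norm (which is easier since no decay estimates are needed and one can use that $\spec(A)\subset\spec_\Gamma(A)$ to choose an equivalent norm with $\|A^{-1}\|\|A\|^{r_0}<1$), produce a fixed point $R\in C^r$ with $R\circ F = H\circ R$, and observe that the $\overline{\mathcal{G}}_m$-iteration starting from $R_0$ is a subsequence of the $\widetilde{\mathcal{G}}$-iteration, so both limits coincide in $C^r$. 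Hence $R = S\in C^r_\Gamma$ and satisfies $R\circ F = H\circ R$ near $0$.
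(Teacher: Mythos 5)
Your proposal is correct and follows essentially the same route as the paper, which proves Theorem \ref{Sternberg:Theorem:SternbergSpectralNR} simply by combining the arguments of Theorems \ref{Sternberg:Theorem:SternbergNR} and \ref{Sternberg:Theorem:SternbergSpectral}: the operator $\overline{\mathcal{G}}_m(g)=H^{-m}\circc g\circ F^m$ with $m$ chosen via the $\Gamma$-spectral radius formula, the Lipschitz (rather than norm) estimate exploiting that after rescaling $H^{-1}$ is $C^r_\Gamma$-close to $A^{-1}$, and the final $C^r$ uniqueness/subsequence argument to pass from a conjugacy of $F^m$ to one of $F$. In fact your write-up supplies more detail than the paper does for this statement.
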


The proof of this theorem is a combination of the arguments in the proofs of
Theorem \ref{Sternberg:Theorem:SternbergNR} and Theorem
\ref{Sternberg:Theorem:SternbergSpectral}.

%%%%%%%%%%%%%%%%%%%%%%%%%%%%%%%%%%%%%%%%%%%%%%%%%%%%%%%%%%%%%%%%%%%%%%%%%%%%%%%%%%%%%%%%%%

\begin{acknowledgements}
The authors acknowledge the support of the Spanish grant Mineco MTM2013--41168--P
and the Catalan grant AGAUR 2014 SGR 1145.
E.F. also  acknowledges the support of  MTM2016--80117--P (MINECO/FEDER, UE).
\end{acknowledgements}

\bibliographystyle{spmpsci}
\bibliography{}
\end{document}